\providecommand {\norm}[1] {\lVert#1\rVert}
\providecommand {\bignorm}[1] {\Bigl\lVert#1\Bigr\rVert}
\providecommand {\Bignorm}[1] {\biggl\lVert#1\biggr\rVert}
\providecommand {\abs}[1] {\lvert#1\rvert}
\providecommand {\set}[1]{\lbrace #1 \rbrace}
\providecommand {\bigset}[1]{\Bigl\lbrace #1 \Bigr\rbrace}
\providecommand {\floor}[1]{\lfloor #1 \rfloor}
\providecommand {\besov}[3]{\Lambda^ {#1} _ {#2} ({#3})}
\providecommand {\bessel}[2]{\mathcal{P}_ {#1} ({#2})}
\providecommand {\dadas}[3]{D^ {#1} _ {#2} ({#3})}
\providecommand {\dada}[2][M]{D^{1}_{#1} ({#2})} 
\providecommand {\inject}{\hookrightarrow}
\providecommand {\inv}[1]{{#1}^{-1}}
\newcommand {\one} e
\newcommand {\bn} {\ensuremath{\mathbb{N}}}
\newcommand {\br} {\ensuremath{\mathbb{R}}}
\newcommand {\bz} {\ensuremath{\mathbb{Z}}}
\newcommand {\bt} {\ensuremath{\mathbb{T}}}
\newcommand {\brd} {\ensuremath{\mathbb{R}^d}}
\newcommand {\mA} {\ensuremath{\mathcal{A}}}
\newcommand {\mD} {\ensuremath{\mathcal{D}}}
\newcommand {\mF} {\ensuremath{\mathcal{F}}}
\newcommand {\mJ} {\ensuremath{\mathcal{J}}}
{\newcommand {\mG} {\ensuremath{\mathcal{G}}}

\newcommand {\mO} {\ensuremath{\mathcal{O}}}
\newcommand {\mM} {\ensuremath{\mathcal{M}}}

\newcommand {\mB} {\ensuremath{\mathcal{B}}}

\newcommand {\mC} {\ensuremath{\mathcal{C}}} 
\newcommand {\mP} {\ensuremath{\mathcal{P}}}










\newcommand {\bop}  {{\ensuremath{\mathcal B ( \ell^2)}}}


\DeclareMathOperator{\dd}{\mathrm{D}}

\DeclareMathOperator{\id}{\mathrm{id}}


\newcommand \BS {Banach space}
\newcommand \BA {Banach algebra}
\newcommand \MA {matrix algebra}

\newcommand \odd {off-diagonal decay}

\newcommand \IC {inverse-closed}

\newcommand {\cexp} [1] [{x t}] {\ensuremath{e^{2 \pi i #1}}}
\newcommand \nci {norm-controlled inversion}
\newcommand \Nci {Norm-controlled inversion}

\newcommand { \muleb} [1] {\frac{d {#1}}{#1}}

 \newcommand {\bigo}{{\mathcal{O}}}


\newtheorem{prop}{Proposition} [section]
\newtheorem{cor}[prop]{Corollary}
\newtheorem{thm}[prop]{Theorem}

\newtheorem{lem}[prop]{Lemma}

\theoremstyle{definition}
\newtheorem{defn}[prop]{Definition} 

\theoremstyle{remark}
\newtheorem*{rem}{Remark}
\newtheorem*{rems}{Remarks}
\newtheorem{ex}[prop]{Example}


\setlength{\topmargin}{-8mm}
\setlength{\headheight}{8pt}
\setlength{\textheight}{220mm}  

\setlength{\oddsidemargin}{0pt}
\setlength{\evensidemargin}{0pt}
\setlength{\textwidth}{148 mm}   

\begin{document}
\begin{abstract}
We show that smoothness implies \nci : the smoothness of an element
$a$  in
a Banach algebra with a one-parameter automorphism group is preserved
under inversion, and the norm of the inverse $a^{-1}$  is controlled by
the smoothness of $a$ and by spectral data.  In our context smooth
subalgebras are obtained with the classical constructions of
approximation theory and resemble spaces of differentiable functions,
Besov spaces or Bessel potential spaces. To treat ultra-smoothness, we
resort to Dales-Davie algebras. Furthermore, based on  Baskakov's
work, we derive explicit norm control estimates for infinite matrices
with polynomial off-diagonal decay. This is a 
quantitative version of Jaffard's theorem.

\end{abstract}

\title{Norm-Controlled Inversion in Smooth Banach Algebras, II}
\author{Karlheinz Gr\"ochenig}
\address{Faculty of Mathematics \\
University of Vienna \\
Nordbergstrasse 15 \\
A-1090 Vienna, Austria}
\email{karlheinz.groechenig@univie.ac.at}
\author{Andreas Klotz}
\address{Faculty of Mathematics \\
University of Vienna \\
Nordbergstrasse 15 \\
A-1090 Vienna, Austria}
\email{andreas.klotz@univie.ac.at}

\subjclass[2010]{41A65,46H30,47L99,47A60, 42A10,47B47}
\date{}
\keywords{Inverse-closed Banach algebra, norm-control, Besov space,
  Bessel algebra, off-diagonal decay of matrices,  iterated quotient rule}
\thanks{This research was  
  supported by the  project P22746-N13  of the
Austrian Science Foundation (FWF)}%
\maketitle

\section{Introduction}

Norm control refers to the phenomenon that in a Banach algebra the
norm of an invertible element can be controlled solely  by its smoothness and
by   spectral data. 

The prototype of  norm-controlled
inversion occurs already for differentiable functions.   Let $ C^1(\bt )$ be the algebra of continuously differentiable functions
on the torus $\bt $ and $ C(\bt )$ be the algebra of all
continuous functions on the torus with the norms $\norm{f}_C =
\norm{f}_\infty = \max
_{t\in \bt } |f(t)|$ and $ \norm{f}_{C^1} = \norm{f}_\infty +
\norm{f'}_\infty $. The quotient rule $(1/f)' = -
f' / f^2$ leads to an obvious estimate for  the $C^1$-norm of  $1/f$,
namely,
\begin{equation}
  \label{eq:sont}
\norm{\tfrac{1}{f}}_{C^1} \leq \norm{\tfrac{1}{f}}_{\infty} + \norm{f'}_\infty
  \norm{\tfrac{1}{f}}_\infty ^2 \leq  \Big(  \norm{f}_{C^1}
  \norm{\tfrac{1}{f}}_C + 1\Big) \norm{\tfrac{1}{f}}_C  \, .
\end{equation}
Here $\norm{f}_{C^1}$ is a measure for the smoothness of $f$, whereas
$\delta = \min _{t\in \bt } |f(t)|) $ is the smallest spectral value
of $f$ and thus $\norm{\inv f } _C = 1/\delta  $ measures a significant parameter
of the spectrum of $f$ in $C(\bt )$.    

In general, we  say that a subalgebra $\mA $ of a unital  Banach algebra \mB\ admits \nci\ if
there exists a function $h: \br_+^2 \to \br_+$ that
    satisfies 
\begin{equation} \label{dnci}
\norm{\inv  a}_\mA \leq h( \norm{a}_\mA , \norm{\inv a }_\mB ) \, 
\end{equation}
for all invertible elements of $\mB $. In particular, the 
invertibility  of $a\in \mA $ in the larger algebra $\mB $ implies the
invertibility of $a$ in the smaller algebra $\mA $ \emph{and} also the
control of the norm of the inverse. Since the norm $\|\inv a \|_\mA $
is related to the condition number of $a$ in the larger algebra $\mA
$, \nci\ implies a quantitative estimate for the condition number in
$\mB $. 

The investigation of \nci\ goes back to the work of Baskakov~\cite{Baskakov97} and
Nikolski~\cite{Nikolski99}.  Baskakov  studied the behavior of the off-diagonal
decay of infinite matrices under inversion. Nikolski studied the lack
of norm control for the algebra of absolutely convergent Fourier
series in $C(\bt )$ and showed  that a modest amount of additional smoothness, as
described by weighted absolutely convergent Fourier series, yields
\nci\   in $C(\bt )$~\cite{elfalla98,Nikolski99}. 

Our own work~\cite{grkl10,klotz12,klotz12pre} was motivated by the
desire to develop a systematic 
theory of inverse-closedness and \nci\ and to understand which
properties of a subalgebra imply \nci . 
In ~\cite{GK12} we introduced the abstract definition  \eqref{dnci}
and showed that \nci\ is possible with  an 
extremely weak notion of smoothness. The main result of \cite{GK12} can be
formulated as follows.

\begin{thm} \label{main0}
  Assume that  \mB\ is a $C^*$-algebra and $\mA \subseteq \mB$ is a
  $*$-subalgebra with a common unit    and a differential (semi-)
  norm, i.e., $\norm{\cdot }$
  satisfies  the inequality 
\begin{equation}
\norm{ab}_\mA \leq C(\norm{a}_\mA \norm{b}_\mB+\norm{b}_\mA \norm{a}_\mB)\label{eq:diffsnint}
\end{equation}
for all $a, b \in \mA$.  Then there exist constants $C_1, C_2 > 0$   such that 
  $$
\norm{\inv a } _{\mA } \leq C_1 \norm{a}_{\mA } \norm{\inv a }_{\mB } ^2
\, e^{C_2 \log ^2 ( \norm{a}_{\mA } \norm{\inv a }_{\mB } )} \, .
$$
\end{thm}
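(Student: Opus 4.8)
The plan is to reduce to positive elements and then run a quadratically convergent inversion scheme, tracking both norms of the iterates. Throughout I normalize $\norm{a}_\mB=1$ (the asserted inequality is invariant under $a\mapsto\lambda a$) and assume without loss of generality that $C\ge 1$, that $\norm{\cdot}_\mB\le\norm{\cdot}_\mA$ (so $\norm{a}_\mA\ge 1$), and that the differential seminorm is symmetric, $\norm{a^*}_\mA=\norm{a}_\mA$; the last is achieved by replacing $\norm{\cdot}_\mA$ with $x\mapsto\max(\norm{x}_\mA,\norm{x^*}_\mA)$, which is again a differential seminorm because the involution is isometric on the $C^*$-algebra $\mB$. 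Write $\Delta:=\norm{\inv a}_\mB$ and $\kappa:=\norm{a}_\mA\norm{\inv a}_\mB$. For the reduction to positive $a$: given invertible $a$, the element $A:=a^*a$ is positive with $\norm{A}_\mB=1$, $\norm{\inv A}_\mB=\Delta^2$, and $\norm{A}_\mA\le 2C\norm{a}_\mA$ by \eqref{eq:diffsnint}; since $\inv a=\inv A\,a^*$, one more application of \eqref{eq:diffsnint} gives $\norm{\inv a}_\mA\le C(\norm{\inv A}_\mA+\Delta^2\norm{a}_\mA)$. Hence it suffices to prove, for \emph{positive} $a$ with $\norm{a}_\mB=1$, the estimate $\norm{\inv a}_\mA\ls\norm{a}_\mA\,\Delta\,e^{C_2\log^2\Delta}$, and then apply it to $A$, using $\log(\norm{A}_\mA\norm{\inv A}_\mB)\le 2\log\kappa$.

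So let $a$ be positive with $\norm{a}_\mB=1$; then $\spec_\mB(a)\subseteq[1/\Delta,1]$. I would run the iteration $b_0=e$, $b_{j+1}=2b_j-b_jab_j$. Two facts drive the argument. (i) $e-ab_{j+1}=(e-ab_j)^2$, so $b_j=p_j(a)$ with $e-ap_j(a)=(e-a)^{2^j}$; by the $C^*$-functional calculus $\rho_j:=\norm{e-ab_j}_\mB\le(1-1/\Delta)^{2^j}$. (ii) On $[1/\Delta,1]$ one has $0\le p_j(x)=\bigl(1-(1-x)^{2^j}\bigr)/x\le 1/x$, so again by functional calculus $\norm{b_j}_\mB\le\Delta$; this uniform bound on the $\mB$-norms of the iterates is exactly where the $C^*$-structure enters. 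Feeding (ii) into \eqref{eq:diffsnint} applied twice to $b_jab_j$ yields a linear recursion $\norm{b_{j+1}}_\mA\le C'\Delta\,\norm{b_j}_\mA+C'\Delta^2\norm{a}_\mA$, hence $\norm{b_j}_\mA\ls(C'\Delta)^j\Delta\norm{a}_\mA$; similarly, from $e-ab_{j+1}=(e-ab_j)^2$ and \eqref{eq:diffsnint}, $\norm{e-ab_j}_\mA\ls(2C)^j\norm{a}_\mA$.

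To finish, let $j^\ast$ be the least integer with $\rho_{j^\ast}<1/(2C)$; since $\rho_j\le e^{-2^j/\Delta}$ one may take $j^\ast\le\log_2\Delta+c_0$ for an absolute constant $c_0$. Then $ab_{j^\ast}$ is invertible in $\mB$, and since $\norm{e-ab_{j^\ast}}_\mB<1/(2C)$ the Neumann series $(ab_{j^\ast})^{-1}=\sum_{k\ge 0}(e-ab_{j^\ast})^k$ converges in $\mA$ as well, via the iterated bound $\norm{u^k}_\mA\le C^{k-1}k\,\norm{u}_\mB^{k-1}\norm{u}_\mA$ that follows from \eqref{eq:diffsnint} by induction; thus $\norm{(ab_{j^\ast})^{-1}}_\mA\ls\norm{e-ab_{j^\ast}}_\mA\ls(2C)^{j^\ast}\norm{a}_\mA$. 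As $\inv a=b_{j^\ast}(ab_{j^\ast})^{-1}$, a last application of \eqref{eq:diffsnint} gives $\norm{\inv a}_\mA\ls(C''\Delta)^{j^\ast}\Delta\norm{a}_\mA$ with $C''=\max(C',2C)$; inserting $j^\ast\le\log_2\Delta+c_0$ turns $(C''\Delta)^{j^\ast}=e^{j^\ast\log(C''\Delta)}$ into $\ls e^{C_2\log^2\Delta}$, which is the positive-element estimate. Combined with the reduction step and $\log\Delta\le\log\kappa$ this proves the theorem.

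The crux is the control of the $\mA$-norms of the iterates: each Newton step costs a factor $\sim\Delta$ in the $\mA$-norm, which is affordable only because quadratic convergence caps the number of steps at $j^\ast\asymp\log\Delta$ and because the $C^*$-calculus pins $\norm{b_j}_\mB$ at $\le\Delta$ --- absent the latter, $\norm{b_j}_\mB$ (the $\mB$-norm of the degree-$(2^j-1)$ polynomial $p_j(a)$) could grow doubly exponentially in $j$ and the scheme would be worthless. Everything else --- solving the two linear recursions, bookkeeping the constants $C',C'',c_0$, and checking the precise choice of $j^\ast$ needed for the terminal Neumann series to converge in $\mA$ --- is routine once this is in place.
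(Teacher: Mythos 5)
This theorem is not proved in the present paper: it is quoted from the companion paper \cite{GK12}, so there is no internal proof to compare against. Your argument is correct, and it is in essence the argument of \cite{GK12}: the Newton--Schulz iteration $b_{j+1}=b_j(2e-ab_j)$ is exactly the dyadic product expansion $\inv a=\prod_{k\ge 0}\bigl(e+(e-a)^{2^k}\bigr)$ used there, the reduction to the positive element $a^*a$ via the $C^*$-identity is the same, and the characteristic factor $e^{C_2\log^2\kappa}$ arises in both treatments from the same mechanism you identify --- about $\log_2\Delta$ doubling steps, each costing a factor of order $\Delta$ in the $\mA$-norm. The only implicit assumptions you should flag are that $\mA$ is complete and continuously embedded in $\mB$ with a $*$-invariant norm (needed for your terminal Neumann series and for the passage to $\max(\norm{x}_\mA,\norm{x^*}_\mA)$ not to change $\norm{a}_\mA$ in the final estimate); these are part of the standing hypotheses of \cite{GK12}.
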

Note that in this estimate the dependence of $\|\inv a \|_{\mA} $ on
$\inv \delta = \norm{ \inv a}_{\mB}  $ is
faster than any polynomial, whereas in the motivating example $C^1(\bt
)$ it is just $\delta ^{-2}$. 

In this paper we will impose  more structure on the ambient algebra
$\mB $ and will investigate stronger notions of smoothness in a Banach
algebra. Our goal is to  derive stronger quantitative estimates for 
\nci . On an abstract level we develop a systematic
construction of ``smooth'' subalgebras that admit \nci . On a concrete
level we investigate Banach algebras of infinite matrices and find
explicit quantitative estimates for the off-diagonal decay of the
inverse.

Let us describe the main conclusions  in more detail.

(a) \emph{``Smoothness implies norm controlled inversion.''} We will
derive several theorems that  give a precise meaning to this
meta-theorem. In operator theory, smooth elements in a Banach algebra
are  usually defined by a derivation, quite in analogy to the
definition of the function space $C^n(\bt )$ of $n$-times continuously
differentiable functions on the torus $\bt$. This analogy motivated
the non-commutative approximation theory in ~\cite{grkl10}. One of 
 the main insights  was that the 
standard constructions of smoothness spaces for functions possess a
non-commutative analogue for Banach algebras.   In particular, for a
Banach algebra with a (one parameter) automorphism group one can
define subalgebras that correspond to H\"older-Lipschitz spaces or to
Besov spaces or to Bessel potential spaces. Whereas the main results
of~\cite{grkl10,klotz12} can be paraphrased by saying that ``smoothness'' in a
Besov subalgebra is
preserved under inversion (a qualitative result),  we will derive
quantitative versions, i.e., we will derive explicit results about the
\nci\  in such subalgebras. Typically, the notation of a smooth
subalgebra contains a  smoothness parameter $r$, for
instance, the notation of the  Besov subalgebra  $\mC = \Lambda ^p_r(\mA )$
of a Banach algebra $\mA $  indicates that $\mC $ consists of elements with
smoothness $r$. For such subalgebras we will prove
the \nci\ of the form
\begin{equation}
  \label{eq:i9}
  \norm{\inv a}_\mC \leq C   \norm{\inv a}_\mA ^{r+1}   \norm{a}_\mC ^r
  \, .
\end{equation}
 The precise definitions and formulations are somewhat more
 technical and contained in Theorems~\ref{ncideralg},
 \ref{thm-besov-nci} and~\ref{prop:thm-besspot_baic}. These 
 theorems can be seen as a vast generalization of the quotient
 rule~\eqref{eq:sont}. To our knowledge these results are new even for
 Besov spaces on the torus. 

(b) \emph{Norm-controlled inversion and off-diagonal decay of infinite
matrices.}  Perhaps the most important application of \nci\ is to
Banach algebras of infinite matrices. The relevant algebra is $\bop $
of bounded operators on $\ell ^2(\bz )$, and the smoothness of an
matrix  amounts to its off-diagonal decay. The prototypical result is
Jaffard's theorem~\cite{Jaffard90}:  If $A= (A(k,l))_{k,l\in 
\bz } $ is an infinite matrix with off-diagonal decay 
\begin{equation}
  \label{eq:e1}
|A(k,l)| \leq C (1+|k-l|)^{-r} \qquad k,l\in \bz \, ,\quad r>1,  
\end{equation}
and if $A$ is (boundedly) invertible on $\ell ^2(\bz )$, then there
exists a constant $C'$ such that
$$
|\inv A (k,l) | \leq C' (1+|k-l|)^{-r} \qquad k,l\in \bz \, .
$$
Thus the quality of the off-diagonal decay is preserved. In the
language of Banach algebras Jaffard's theorem states 
that the algebra of matrices with polynomial off-diagonal decay of
order $r$ is inverse-closed in $\bop $.
 This powerful result
had many important applications in wavelet theory~\cite{Jaffard90}, time-frequency analysis~\cite{book}, frame theory~\cite{gro04}, and numerical analysis~\cite{GRS10}.
We also refer to~\cite{Gr10} for a survey of this
topic. We emphasize that Jaffard's theorem shows only the existence of
a constants $C'$ for the inverse matrix and is thus a purely
qualitative result. For numerical purposes, however, 
it is of utmost importance to understand how
big the constant $C'$ for the decay of $\inv A $ can be. In our
language,  we need to solve the problem whether  Jaffard's algebra
admits \nci .

Although polynomial off-diagonal decay is related to the axiomatic
construction of Besov algebras and Bessel algebras, the Banach algebra
defined by the decay conditions~\eqref{eq:e1} cannot be realized
directly within the abstract theory. Therefore neither the
inverse-closedness nor the \nci\ in $\bop $ follow directly from our
theory and require deeper ideas. A proof of the \nci\ of Jaffard's
algebra is implicit in the deep work of
Baskakov~\cite{Baskakov97}. Unfortunately he stopped short of deriving
explicit estimates, and thus his fundamental result has not received
as much attention as it deserves.  In this paper we make the extra
step and prove the following  explicit quantitative version of Jaffard's theorem.

\begin{thm}[Quantitative version of Jaffard's theorem] \label{thm:in0}
Assume that the infinite matrix $A$ is (boundedly) invertible on $\ell
^2(\bz )$ and that it possesses off-diagonal decay of order $r>1$ as
in~\eqref{eq:e1} with minimal constant $C$. Then the inverse matrix possesses the off-diagonal
decay 
$$
    |A^{-1} (k,l)|   \leq C_r \, C^{2r+2 +\frac{2}{r-1}} \norm{\inv
   A}_{\bop}^{ 2 r+3 + \frac{2}{r-1}}  \, ,
$$    
where $C_r$ is a  constant depending only on
$r>1$.
\end{thm}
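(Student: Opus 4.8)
The plan is to follow Baskakov's band-truncation strategy and to make every constant explicit. Write $\mJ_r$ for the Jaffard algebra of matrices $A=(A(k,l))_{k,l\in\bz}$ with $\norm{A}_{\mJ_r}:=\sup_{k,l}(1+\abs{k-l})^{r}\abs{A(k,l)}<\infty$; for $r>1$ the convolution estimate $\sum_{i\in\bz}(1+\abs{i})^{-r}(1+\abs{j-i})^{-r}\le c_r(1+\abs{j})^{-r}$ shows that $\mJ_r$ is a Banach algebra, and the Schur test gives the embedding $\norm{A}_{\bop}\le c_r\norm{A}_{\mJ_r}$. Let $C=\norm{A}_{\mJ_r}$, the minimal constant in~\eqref{eq:e1}. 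The asserted inequality is homogeneous of degree $-1$ under $A\mapsto\lambda A$, so we may normalize $\norm{\inv A}_{\bop}=1$; then $C\ge c_r^{-1}$, and since unnormalizing at the end restores the factor $\norm{\inv A}_{\bop}^{\,2r+3+2/(r-1)}$ it suffices to prove $\norm{\inv A}_{\mJ_r}\le C_r\,C^{2r+2+2/(r-1)}$.

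\emph{Reduction to a band matrix.} Let $A_N$ be the truncation of $A$ to $\abs{k-l}\le N$. Summing the discarded diagonals, $\norm{A-A_N}_{\bop}\le\sum_{\abs{j}>N}\sup_k\abs{A(k,k-j)}\le C\sum_{\abs{j}>N}(1+\abs{j})^{-r}\le c_rCN^{-(r-1)}$, so also $\norm{A}_{\bop}\le c_rC$. Pick $N_0$ minimal with $c_rCN_0^{-(r-1)}\le 1/3$, so $N_0\asymp_r C^{1/(r-1)}$. Then $R:=A-A_{N_0}$ has $\norm{R}_{\bop}\le 1/3$, the band matrix $A_{N_0}$ is invertible with $\norm{\inv{A_{N_0}}}_{\bop}\le 3/2$, and its condition number is $\kappa:=\norm{A_{N_0}}_{\bop}\norm{\inv{A_{N_0}}}_{\bop}\le c_r'C$.

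\emph{Inverse of the band matrix and Neumann expansion.} This is the point of contact with Baskakov's work: the quantitative Demko--Moss--Smith bound, applied to the positive band matrix $A_{N_0}^{*}A_{N_0}$ and then to $\inv{A_{N_0}}=(A_{N_0}^{*}A_{N_0})^{-1}A_{N_0}^{*}$, produces $q=q(\kappa)\in(0,1)$ with $\abs{\log q}\gtrsim\kappa^{-1}$ and a constant $K_0$ bounded polynomially in $C$ and $N_0$ such that $\abs{\inv{A_{N_0}}(k,l)}\le K_0\,q^{\abs{k-l}/N_0}$. Writing $A=A_{N_0}(I+U)$ with $U:=\inv{A_{N_0}}R$ and $\norm{U}_{\bop}\le\tfrac32\cdot\tfrac13=\tfrac12$, one obtains the norm-convergent Neumann series $\inv A=\sum_{m\ge0}(-1)^{m}\inv{A_{N_0}}\bigl(R\,\inv{A_{N_0}}\bigr)^{m}$, whose $m$-th summand has $\bop$-norm at most $\tfrac32\,2^{-m}$.

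\emph{Core entrywise estimate --- the hard part.} Fix $k,l$ and put $d=\abs{k-l}$. For $d\le N_0$ the trivial bound $\abs{\inv A(k,l)}\le 1$ with $(1+d)^{r}\le(1+N_0)^{r}\asymp_r C^{r/(r-1)}$ is already dominated by the target, so assume $d>N_0$. Decompose $R=\sum_{j\ge0}R^{(j)}$ into dyadic shells, $R^{(j)}$ carrying the entries with $\abs{p-q}\in[2^{j}N_0,2^{j+1}N_0)$, so that $\abs{R^{(j)}(p,q)}\le C(2^{j}N_0)^{-r}$, $\norm{R^{(j)}}_{\bop}\le c_rC(2^{j}N_0)^{-(r-1)}$ and $\sum_j\norm{R^{(j)}}_{\bop}\le 1/3$. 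In a chain $\inv{A_{N_0}}R^{(j_1)}\inv{A_{N_0}}\cdots R^{(j_m)}\inv{A_{N_0}}$ the exponential decay of the $m+1$ resolvent factors \emph{localizes} the product: an entry at distance $d$ essentially only picks up contributions from index chains along which the increments $2^{j_1}N_0,\dots,2^{j_m}N_0$ together with the exponential tails of the $m+1$ resolvent factors reach $\gtrsim d$; any residual gap is absorbed into one resolvent tail which, converted to polynomial order $r$, costs a power of $\kappa$ and of $1+d$ that does not grow geometrically in $m$, while on the intervening polynomial factors one simply uses $\sum_i(1+\abs{i})^{-r}\le c_r$. Summing first over the shell tuples $(j_1,\dots,j_m)$ of a given length --- their total operator mass is $\le\tfrac32\,2^{-m}$ by the previous step, and the separate shell sums converge because $r>1$ --- and then over $m$, one arrives at $\abs{\inv A(k,l)}\le C'(1+d)^{-r}$ with $C'$ bounded polynomially in $\kappa$, $K_0$ and $N_0$. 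Inserting $\kappa\asymp_r C$, $N_0\asymp_r C^{1/(r-1)}$ and the polynomial bound for $K_0$ and tracking the powers gives $C'\le C_r\,C^{2r+2+2/(r-1)}$, the non-integer exponent $2/(r-1)$ tracing back to the bandwidth $N_0\asymp_r C^{1/(r-1)}$; unnormalizing then yields the stated bound. The genuine obstacle is this step: the naive Banach-algebra estimate $\norm{U^{m}}_{\mJ_r}\le(c_r\norm{U}_{\mJ_r})^{m}$ blows up geometrically and is useless, so one must exploit \emph{simultaneously} the exponential localization of $\inv{A_{N_0}}$, the off-band support of $R$, and the operator-norm smallness $\norm{U}_{\bop}\le\tfrac12$ to obtain a bound that is both summable in $m$ and only polynomially large in $\kappa$ and $N_0$ --- in particular keeping the power of $N_0$ near $2$ rather than the $\approx r$ that a crude exponential-to-polynomial conversion of $\inv{A_{N_0}}$ would incur.
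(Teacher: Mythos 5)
Your overall strategy --- band truncation, a Demko--Moss--Smith bound for the banded part, and a Neumann series for the off-band remainder --- is indeed the skeleton of Baskakov's argument, but as written the proposal does not prove the theorem: the paragraph you label ``core entrywise estimate --- the hard part'' is a description of what must be shown, not a demonstration. The difficulty you correctly isolate (bounding the entries of $\inv{A_{N_0}}\bigl(R\,\inv{A_{N_0}}\bigr)^m$ in a way that is simultaneously summable in $m$, of polynomial order $r$ in $1+\abs{k-l}$, and only polynomially large in $\kappa$ and $N_0$) is precisely the deep content of Baskakov's theorem, and none of the key claims in that paragraph is substantiated: you do not show how the ``residual gap'' is absorbed into a single resolvent tail at a cost independent of $m$, nor why the exponential-to-polynomial conversion of $\inv{A_{N_0}}$ --- which naively costs a factor of order $(N_0\,\kappa)^{r}$, i.e.\ an extra $C^{r+r/(r-1)}$ --- can be kept down to the power of $N_0$ you need, nor how the shell sums and the $m$-sum combine to produce exactly the exponent $2r+2+\tfrac{2}{r-1}$. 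The final bookkeeping is asserted rather than computed, so the stated exponent is not actually derived from your argument.

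The paper takes a different and much shorter route: it does \emph{not} reprove Baskakov's estimate but quotes it as Theorem~\ref{baskthm}(ii), namely $\norm{\inv A}_{\mJ_r}\leq 4\,\widetilde\ell_r(A)\bigl(2+(2\cdot 3^r\norm{A}_{\mJ_r}\widetilde\ell_r(A))^{\frac{1}{r-1}}\bigr)^r$, and then in Proposition~\ref{Brueckenlemma}(ii) performs only the elementary step of bounding $\widetilde\ell_r(A)\leq c\,\norm{A}_{\bop}^{r}\norm{\inv A}_{\bop}^{r+1}$ (using $1/\log\inv{(1-\beta)}\leq 24\kappa(A)+1$ for $\beta=(24\kappa(A)+1)^{-1}$) and substituting, which together with $\norm{A}_{\bop}\leq c_r\norm{A}_{\mJ_r}$ yields the exponents $2r+2+\tfrac{2}{r-1}$ and $2r+3+\tfrac{2}{r-1}$. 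To make your write-up into a proof you must either carry out the localization argument of your third step in full detail --- essentially reproving Baskakov's Theorem~6, including the interplay between the operator-norm smallness of $U$ and the entrywise decay that you only gesture at --- or, as the paper does, cite that result and confine the new work to the explicit estimation of $\widetilde\ell_r(A)$.
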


Again, the norm of the inverse depends only on the smoothness, which
in this case corresponds to the off-diagonal decay of $A$, and
spectral information of $A$ on $\ell ^2$. The formulation is similar
to the theorem of Demko, Moss, and Smith on the exponential  decay of
the inverse of a banded matrix~\cite{Demko84}, but the proof of Theorem~\ref{thm:in0} is much more
difficult. A weaker form of \nci\ was studied in~\cite{romero}. 


(c) In a sense much of the theory of non-commutative smoothness spaces
amounts to studying approximation theory with weights of
polynomial growth.  A corresponding theory for weights of
super-polynomial growth is much less developed. In this paper we
choose the  notion of Dales-Davie algebras to study subalgebras of a
given Banach algebra that are associated to a weight of
super-polynomial growth. The elements in the Dales-Davie algebras
correspond to ultra-smooth elements. Our main result states that a
Dales-Davie algebra admits \nci .

The paper is organized as follows: In Section~2 
 we recall several
possible concepts of smoothness in a Banach algebra and define Besov
 algebras, Bessel algebras, and Dales-Davie algebras. We prove
 the explicit \nci\ for each of these classes of subalgebras. In
 Section~3 
 we apply the abstract results to concrete Banach
 algebras of matrices and derive a quantitative version of Jaffard's
 theorem. The appendix contains the proofs of the higher-order
 quotient rules for derivations and difference operators.

\section{Norm Controlled Inversion: Improved Estimates by Quotient Rules}
\label{sec:quotient}
Let $\mA \subseteq \mB$ be \BA s with common unit element.
 We say that 
    $\mA$ admits \nci\ in \mB, if there is a function $h: \br_+^2 \to \br_+ $ that
    satisfies 
\begin{equation*}
\norm{ \inv a }_\mA \leq h( \norm{a}_\mA , \norm{ \inv a }_\mB ) \, .
\end{equation*}
If we only know that  $a\in \mA $ and $\inv a \in \mB $ implies $\inv
a\in \mA $, we say that $\mA $ is \IC\ in $\mB$.  

 The control function $h$ in this  definition  is not unique
 and it is often convenient to describe \nci\ in a  slightly different
 way.

\begin{lem}[\cite{GK12}]
  Let $\mA \subseteq \mB$ be \BA s with common unit element. Then
  \mA\ admits norm control, if and only if  
there exists a function $\phi  : (0,1) \to \br_+$, 
  such that,   for $a \in \mA$, 
  \begin{equation}
    \label{eq:3}
   \norm{a}_\mA \leq 1 \qquad \text{ and } \qquad  \norm{ \inv a }_\mB \leq 1/\delta
  \end{equation}
implies that $\inv a  \in \mA $  and 
$$\norm{\inv a }_\mA \leq \phi (\delta ) \, .
$$
 Explicitly, $\phi (\delta ) $ can be chosen to be 
\begin{equation} \label{eq:visconst}
  \phi(\delta)=\sup \set{\norm{\inv a }_\mA \colon \norm{a}_\mA \leq
    1, \norm{\inv a }_\mB \leq 1/\delta} \,.
\end{equation}
The norm control function $h$ is then 
\begin{equation}
  \label{eq:c1}
  h(\norm{a}_\mA , \norm{\inv a} _\mB) =  \frac{1}{\norm{a} _\mA } \phi
  \Big( \frac{1}{\norm{a}_\mA  \norm{\inv a} _\mB}\Big)   \, .
\end{equation}
\end{lem}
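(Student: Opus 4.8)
The plan is to establish the equivalence directly from the definition of \nci, by showing that the two quantifier formulations are essentially a change of variables via homogeneity. First I would prove the direction ``\nci\ implies the $\phi$-formulation.'' Suppose $\mA$ admits \nci\ with control function $h$. Given $a \in \mA$ with $\norm{a}_\mA \leq 1$ and $\norm{\inv a}_\mB \leq 1/\delta$, I simply invoke \eqref{dnci} to get $\norm{\inv a}_\mA \leq h(\norm{a}_\mA, \norm{\inv a}_\mB)$, and then I want to bound the right-hand side by a quantity depending only on $\delta$. The subtlety is that $h$ need not be monotone in its arguments, so I cannot directly replace $\norm{a}_\mA$ by $1$ and $\norm{\inv a}_\mB$ by $1/\delta$. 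The clean fix is to \emph{define} $\phi$ by the explicit supremum \eqref{eq:visconst}; then $\norm{\inv a}_\mA \leq \phi(\delta)$ holds essentially by definition, and finiteness of $\phi(\delta)$ is precisely the content of \nci\ applied to the (bounded) family of such $a$. One should check $\phi(\delta) < \infty$ for each $\delta \in (0,1)$: this follows because any $a$ in the defining set satisfies $\norm{\inv a}_\mA \leq h(\norm{a}_\mA, \norm{\inv a}_\mB) \leq \sup\{h(s,t) : s \leq 1, t \leq 1/\delta\}$, and to ensure \emph{this} sup is finite one uses that \nci\ can always be upgraded to a control function that is nondecreasing in each argument (replace $h$ by $\tilde h(s,t) = \sup\{h(s',t') : s' \leq s, t' \leq t\}$, which still satisfies \eqref{dnci}); I would include this normalization remark.

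For the converse, suppose the $\phi$-formulation holds and let $a \in \mA$ with $\inv a \in \mB$. Set $s = \norm{a}_\mA$ and $t = \norm{\inv a}_\mB$; note $st \geq \norm{a \inv a}_\mB = 1$, so $\delta := 1/(st) \in (0,1]$. Rescale: $b = a/s$ has $\norm{b}_\mA = 1$ and $\norm{\inv b}_\mB = s\,\norm{\inv a}_\mB = st = 1/\delta$. (If $\delta = 1$ one handles the boundary case separately or extends $\phi$ to $(0,1]$; this is a cosmetic point.) The hypothesis then gives $\inv b \in \mA$, hence $\inv a = s^{-1}\inv b \in \mA$, with $\norm{\inv a}_\mA = s^{-1}\norm{\inv b}_\mA \leq s^{-1}\phi(\delta) = \frac{1}{\norm{a}_\mA}\phi\!\big(\frac{1}{\norm{a}_\mA \norm{\inv a}_\mB}\big)$, which is exactly formula \eqref{eq:c1} for the induced control function $h$. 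This both proves inverse-closedness of $\mA$ in $\mB$ and exhibits the explicit $h$, completing the equivalence.

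The only genuine obstacle is the non-monotonicity issue in the first direction: without the normalization of $h$ to a nondecreasing function, the naive substitution fails and $\phi(\delta)$ as defined by \eqref{eq:visconst} could a priori be infinite. Once one observes that \nci\ with \emph{any} $h$ implies \nci\ with a nondecreasing $h$ (an elementary ``running supremum'' argument), everything else is bookkeeping with the scaling $a \mapsto a/\norm{a}_\mA$. I would present the monotonization as a one-line lemma or a parenthetical remark, then give the two directions as above; the whole proof is short.
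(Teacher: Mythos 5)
The paper itself does not prove this lemma (it is quoted from \cite{GK12}), so I can only measure your argument against the standard one — which it matches in substance. The converse direction is the real content: rescaling $b = a/\norm{a}_\mA$ so that $\norm{b}_\mA = 1$, applying the $\phi$-hypothesis with $\delta = 1/(\norm{a}_\mA \norm{\inv a}_\mB)$, and undoing the scaling is exactly the computation that produces \eqref{eq:c1}, and your observation that $\norm{a}_\mA \norm{\inv a}_\mB \geq \norm{a}_\mB\norm{\inv a}_\mB \geq 1$ guarantees $\delta \leq 1$ is the right one.

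The one step I would not let stand as written is the monotonization in the forward direction. You assert that $\tilde h(s,t) = \sup\{h(s',t') : s' \leq s,\ t' \leq t\}$ ``still satisfies \eqref{dnci}''; it does satisfy the inequality, but for an arbitrary function $h : \br_+^2 \to \br_+$ nothing prevents $\tilde h$ from taking the value $+\infty$, so this does not establish finiteness of $\phi(\delta)$ — the argument is circular, since local boundedness of $h$ is precisely what is at issue. Two honest repairs: (a) note that under the constraints \eqref{eq:3} the achievable pairs $(\norm{a}_\mA, \norm{\inv a}_\mB)$ lie in the compact rectangle $[\delta,1]\times[1,1/\delta]$ (again because $\norm{a}_\mA\norm{\inv a}_\mB \geq 1$), and add the harmless standing assumption — satisfied by every control function occurring in practice, and implicit in \cite{GK12} — that $h$ is bounded on bounded sets; or (b) read the ``explicitly, $\phi(\delta)$ can be chosen to be \eqref{eq:visconst}'' clause as the statement that the supremum is dominated by, hence the minimal choice among, any admissible $\phi$, so that its finiteness follows once \emph{some} finite $\phi$ is known to exist. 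With either repair the proof is complete; the rest is, as you say, bookkeeping.
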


\subsection{Domains of Derivations Admit Norm Controlled Inversion}
\label{sec:domder}
  An unbounded  \emph{derivation} $\dd$ on a \BA\ \mA\ is a \emph{closed} linear
  mapping $\dd \colon \mD \to \mA$ with  \emph{domain}
  $\mD=\mD(\dd)=\mD(\dd,\mA)\subseteq \mA$   that satisfies the Leibniz rule
  \begin{equation}
    \label{eq:derivation}
    \dd(ab) =a \dd(b) + \dd(a) b \qquad \text{for all} \,\,
    a,b \in \mD. 
  \end{equation}
We always choose   a maximal domain and endow it with the 
  seminorm $\abs{a}_{\mD}=\norm{\dd(a)}_\mA$ and the graph norm $\norm{a}_{\mD}=\norm{a}_\mA+\abs{a}_\mD$.
Then $\mD$ is again a \BA.
\begin{rem}
It should be observed that we do not assume the domain of $\mD$ to be dense in \mA.  This is important for some of the \MA s that serve as examples in Section~\ref{sec:norm-contr-invers-3}.
\end{rem}
  If \mA\ is a $*$-algebra, we assume that the derivation and
  the domain are symmetric, i.e., $\mD=\mD^*$ and $\dd(a^*)
  =\dd(a)^*$ for all $a \in \mD$. 

For derivations  the following version of the quotient rule is valid. 
  \begin{prop}[{\cite[3.4]{grkl10}, \cite{bratteli96,MR1221047}}]
    Let \mA\ be a symmetric \BA\  and
  $\dd$ be  a symmetric  derivation on
  \mA. 
  If $\one \in \mD (\dd )$, then $\mD (\dd )$
  is \IC\ in \mA . Moreover,  the quotient rule
    \begin{equation}
  \dd(a^{-1})=- a^{-1}\dd(a)a^{-1} \label{eq:quot1}
  \end{equation}
  is valid. 
  \end{prop}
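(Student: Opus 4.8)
The plan is to prove the two assertions separately: first the validity of the quotient rule \eqref{eq:quot1} as a formal identity in $\mD(\dd)$, and then inverse-closedness as a consequence.

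\textbf{Step 1: the quotient rule.} Suppose $a \in \mD(\dd)$ is invertible in $\mA$, so $\inv a \in \mA$; I first want to show $\inv a \in \mD(\dd)$. The natural approach is to approximate. Since $\dd$ is closed, it suffices to produce a sequence $a_n \in \mD(\dd)$ with $a_n \to \inv a$ in $\mA$ and $\dd(a_n)$ convergent in $\mA$; the limit is then the value $\dd(\inv a)$ and $\inv a$ lies in the (maximal) domain. A clean way to manufacture such a sequence is to work inside the \emph{bounded} derivation obtained by exponentiation: using that $\one \in \mD(\dd)$ together with the Leibniz rule, one checks that $\exp(t\dd)$ is a one-parameter group of bounded algebra automorphisms of $\mA$ (this is where symmetry and $\one \in \mD(\dd)$ are used — the unit is fixed, and $\mD(\dd)$ is exactly the set of vectors for which $t \mapsto \exp(t\dd)x$ is differentiable at $0$). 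Since each $\exp(t\dd)$ is an automorphism, it commutes with inversion: $\exp(t\dd)(\inv a) = (\exp(t\dd)a)^{-1}$. Differentiating the right-hand side at $t=0$ — using that $t\mapsto \exp(t\dd)a$ is differentiable with derivative $\dd(a)$, and that inversion is differentiable on the open set of invertibles with the usual formula $(\tfrac{d}{dt}b(t))^{-1}\big|_{t=0} = -\inv b \,(\tfrac{d}{dt}b)\,\inv b$ — shows that $t\mapsto \exp(t\dd)(\inv a)$ is differentiable at $0$ with derivative $-\inv a\,\dd(a)\,\inv a$. Hence $\inv a \in \mD(\dd)$ and $\dd(\inv a) = -\inv a\,\dd(a)\,\inv a$, which is \eqref{eq:quot1}.

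\textbf{Step 2: inverse-closedness.} This is now immediate. If $a \in \mD(\dd)$ with $\inv a \in \mA$, Step~1 gives $\inv a \in \mD(\dd)$; that is precisely the statement that $\mD(\dd)$ is \IC\ in $\mA$. (In the $*$-case one should also note that the symmetry hypotheses $\mD = \mD^*$ and $\dd(a^*)=\dd(a)^*$ are inherited correctly, but this does not affect the inclusion itself.)

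\textbf{Alternative, and the main obstacle.} If one does not wish to invoke the exponential group, one can argue directly: for $\lambda \ne 0$ small, $a - \lambda\one \in \mD(\dd)$ is still invertible, and one expresses $(\exp(t\dd)(\inv a) - \inv a)/t$ via the resolvent-type identity $\inv a - (\exp(t\dd)a)^{-1} = (\exp(t\dd)a)^{-1}(\exp(t\dd)a - a)\inv a$, then passes to the limit $t\to 0$ using boundedness of $\exp(t\dd)$ near $t=0$ and continuity of inversion. The only delicate point in either route is ensuring that differentiation of $t \mapsto \exp(t\dd)a$ really does produce $\dd(a) \in \mA$ for $a$ in the \emph{given} domain — i.e., that the maximal domain of $\dd$ coincides with the set of $\exp(t\dd)$-differentiable vectors — and that $\dd$ being closed lets us conclude membership in that maximal domain from norm-convergence of difference quotients. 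This identification of domains is the step I would treat most carefully; everything else is the algebra of the quotient rule, which is formal once $\inv a$ is known to lie in $\mD(\dd)$.
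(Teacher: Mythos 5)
There is a genuine gap, and it sits at the very first move. You build everything on the claim that $\exp(t\dd)$ is a uniformly bounded one-parameter group of automorphisms of $\mA$ whose differentiable vectors are exactly $\mD(\dd)$. For an unbounded \emph{closed} derivation this is simply not available: closedness does not imply that $\dd$ generates a group (this is the classical integrability problem for unbounded derivations, treated at length in Bratteli's lecture notes, which is one of the sources cited for this proposition). The proposition is stated for an arbitrary closed symmetric derivation, not for the generator of an automorphism group, so both your main route and your ``alternative'' route — which also invokes $\exp(t\dd)$ — rest on an object that need not exist. A second warning sign is that your argument never genuinely uses the symmetry of $\mA$ (you only use it to say the unit is fixed), whereas symmetry is essential: inverse-closedness of $\mD(\dd)$ can fail without it.

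The cited proof runs quite differently and in the opposite order: one first proves inverse-closedness, and only then reads off the quotient rule algebraically. The Leibniz rule gives $\norm{\dd(a^n)}_\mA \leq n \norm{a}_\mA^{n-1}\norm{\dd(a)}_\mA$, hence $\norm{a^n}_{\mD}^{1/n} \to \rho_\mA(a)$, i.e.\ the spectral radius in $\mD(\dd)$ coincides with that in $\mA$. If $a\in\mD(\dd)$ is invertible in $\mA$, symmetry of $\mA$ ensures that the spectrum of $a^*a$ lies in an interval $[\delta, R]$ with $\delta>0$; choosing $\lambda$ suitably, $b=\one - a^*a/\lambda$ satisfies $\rho_\mA(b)<1$, hence $\rho_{\mD}(b)<1$, and the Neumann series $\sum_n b^n$ converges in the Banach algebra $\mD(\dd)$, giving $(a^*a)^{-1}\in\mD(\dd)$ and then $\inv a=(a^*a)^{-1}a^*\in\mD(\dd)$. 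Once $\inv a\in\mD(\dd)$ is known, applying $\dd$ to $a\inv a=\one$ and using $\dd(\one)=0$ yields $\dd(\inv a)=-\inv a\,\dd(a)\,\inv a$ — this last step is the only part of your write-up that survives unchanged.
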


The quotient rule yields an immediate statement about norm control in
$\mD  (\dd )$. 

\begin{cor}
Assume that $\mA$ is a symmetric \BA\ with a closed derivation
$\dd$ and $\one \in \mD (\dd )$.  Then $\mD(\dd,\mA)$ admits
norm control in $\mA $. Precisely, if $a \in \mD(\dd,\mA)$ and $a$ is invertible in \mA, then
\[
\abs{\inv a}_\mD \leq \norm{\inv{a}}_\mA^2 \abs{a}_\mD
\] 
In particular, if $\abs{a}_\mD \leq 1$ and $ \norm{\inv{a}}_\mA\leq 1/\delta$, then 
\[
\abs{\inv a}_\mD \leq 1/\delta^2 \,.
\]
If $\norm{a}_\mD \leq 1$ and $ \norm{\inv{a}}_\mA\leq 1/\delta$, then 
\[
\norm{\inv a}_\mD \leq  1/ \delta+ 1/\delta^2 \,.
\]
\end{cor}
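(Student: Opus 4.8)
The plan is to read everything off the quotient rule \eqref{eq:quot1} for derivations. First, since $\one \in \mD(\dd)$ and $\mA$ is a symmetric \BA\ with symmetric derivation $\dd$, the preceding Proposition tells us that $\mD(\dd)$ is \IC\ in $\mA$; hence whenever $a \in \mD(\dd)$ is invertible in $\mA$ we automatically have $\inv a \in \mD(\dd)$ and the identity $\dd(\inv a) = -\,\inv a\, \dd(a)\, \inv a$ holds in $\mA$. Applying submultiplicativity of $\norm{\cdot}_\mA$ to the right-hand side gives
\[
\abs{\inv a}_\mD = \norm{\dd(\inv a)}_\mA \le \norm{\inv a}_\mA^2\, \norm{\dd(a)}_\mA = \norm{\inv a}_\mA^2\, \abs{a}_\mD \,,
\]
which is exactly the asserted seminorm estimate.

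The two displayed special cases are then immediate substitutions. If $\abs{a}_\mD \le 1$ and $\norm{\inv a}_\mA \le 1/\delta$, the bound above yields $\abs{\inv a}_\mD \le 1/\delta^2$. If instead $\norm{a}_\mD \le 1$, then in particular $\abs{a}_\mD \le \norm{a}_\mD \le 1$, so combining the seminorm bound with $\norm{\inv a}_\mA \le 1/\delta$ and the definition of the graph norm gives $\norm{\inv a}_\mD = \norm{\inv a}_\mA + \abs{\inv a}_\mD \le 1/\delta + 1/\delta^2$. Finally, to record this as norm control in the sense of the definition, one simply takes $h(s,t) = t + t^2 s$ (equivalently, in the normalization of the Lemma, $\phi(\delta) = 1/\delta + 1/\delta^2$), since $\norm{\inv a}_\mD \le \norm{\inv a}_\mA + \norm{\inv a}_\mA^2 \norm{a}_\mD = h(\norm{a}_\mD,\norm{\inv a}_\mA)$.

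There is essentially no obstacle in this argument: the only point that genuinely needs the hypotheses (rather than just the Leibniz rule and submultiplicativity) is the assertion $\inv a \in \mD(\dd)$, i.e., that the quotient rule may legitimately be invoked — but this is precisely the inverse-closedness furnished by the cited Proposition, which rests on a Neumann-type series argument exploiting closedness of $\dd$. Everything beyond that is bookkeeping with the graph norm.
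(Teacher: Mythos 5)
Your proof is correct and follows exactly the route the paper intends: invoke the preceding proposition for inverse-closedness and the quotient rule $\dd(\inv a) = -\inv a\,\dd(a)\,\inv a$, apply submultiplicativity of $\norm{\cdot}_\mA$, and substitute into the graph norm. The paper leaves this as an immediate consequence, and your write-up supplies precisely that bookkeeping with no gaps.
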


\subsubsection*{Norm Controlled Inversion in $\mD(\dd ^k )$}
\label{sec:norm-contr-invers-2}

We next formulate higher order versions of this statement.

The domain of $\dd^k$ is
defined by induction as 
$
\mD (\dd^k ) = \mD (\dd, \mD (\dd^{k-1})) \, .\label{eq:higherder}
$
  In analogy to $C^k(\brd)$ we equip $\mD(\dd^k)$ with the seminorm
\[
\abs{a}_{\mD(\dd^k)} =  \sum_{m=1}^k \frac{\norm{\dd^m(a)}_\mA}{\ m!}
\]
and set $\norm{a}_{\mD(\dd^k)}=\norm{a}_\mA+\abs{a}_{\mD(\dd^k)}$. 
Since $\dd$ is assumed to be a closed operator on $\mA $,  $\dd^k$ is
a closed operator on $\mD (\dd ^k 
)$ (see, e.g. \cite{grkl10}). 
If \mA\ is symmetric and  $\one \in \mD(\dd)$
    then $\mD(\dd ^k )$ is \IC\ in \mA. 
~\cite[3.7]{grkl10}.

We will show that $\mD(\dd^k)$ admits \nci\ in \mA\ for all $k \in
\bn$. 

To extend ~\eqref{eq:quot1}, 
we need the iterated quotient rule.

\begin{lem}\label{invnormderiv}
Let $\dd$ be a derivation on $\mA $ and $k\in \bn $. 
If $\inv a \in \mD (\dd ^k)$, then 
    \begin{equation}\label{eq:itquot}
    \dd^k(\inv a)=\sum_{m=1}^k(-1)^m \sum_{\substack{k_1+ \dotsb k_m=k \\ k_j \geq 1}} \binom{k}{k_1, \dotsc,k_m}
    \Bigl(\prod_{i=1}^m [\inv a \dd^{k_i}(a)]\Bigr) \inv a \,.
  \end{equation}
\end{lem}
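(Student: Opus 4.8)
The plan is to induct on $k$, with the first-order quotient rule~\eqref{eq:quot1} as the base case. For $k=1$ the claimed identity~\eqref{eq:itquot} reads $\dd(\inv a)=-\inv a\,\dd(a)\,\inv a$, which is exactly~\eqref{eq:quot1}; here one should first note that $\inv a\in\mD(\dd)$ together with the inverse-closedness of $\mD(\dd)$ from the preceding Proposition forces $a\in\mD(\dd)$, so that the right-hand side is meaningful. For the inductive step I would assume~\eqref{eq:itquot} for $k-1$ and take $\inv a\in\mD(\dd^k)$; inverse-closedness of $\mD(\dd^k)$ again gives $a\in\mD(\dd^k)$, and since $\mD(\dd^k)\subseteq\mD(\dd^{k-1})$ the induction hypothesis is available for $\dd^{k-j}(\inv a)$ whenever $0\le j\le k-1$.

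The main device is to differentiate the relation $a\,\inv a=\one$. First I would record the general Leibniz rule
\[
\dd^k(xy)=\sum_{j=0}^{k}\binom{k}{j}\,\dd^j(x)\,\dd^{k-j}(y)\qquad\text{for }x,y\in\mD(\dd^k),
\]
proved by a short induction on $k$ from~\eqref{eq:derivation} (using $\dd(\one)=0$). Applying it to $a\,\inv a=\one$ and isolating the $j=0$ summand gives the recursion
\begin{equation}\label{eq:sk-rec}
  \dd^k(\inv a)=-\,\inv a\sum_{j=1}^{k}\binom{k}{j}\,\dd^j(a)\,\dd^{k-j}(\inv a)\,.
\end{equation}
Into~\eqref{eq:sk-rec} I would substitute the induction hypothesis for each $\dd^{k-j}(\inv a)$ with $j<k$ --- the term $j=k$ contributes just $-\inv a\,\dd^k(a)\,\inv a$, which is the $m=1$ term of~\eqref{eq:itquot} --- and then collect the resulting words of the form $\inv a\,\dd^{l_1}(a)\,\inv a\cdots\dd^{l_m}(a)\,\inv a$ indexed by compositions $l_1+\dots+l_m=k$.

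The combinatorial bookkeeping is the heart of the argument, but it is rigid rather than delicate: a word with $m\ge 2$ arises from~\eqref{eq:sk-rec} in exactly one way, namely with $j=l_1$ (note $l_1\le k-1$) paired against the word of $\dd^{k-l_1}(\inv a)$ indexed by $(l_2,\dots,l_m)$, so no summation over sources is needed. Its coefficient works out to $-\binom{k}{l_1}\cdot(-1)^{m-1}\binom{k-l_1}{l_2,\dots,l_m}=(-1)^m\binom{k}{l_1,\dots,l_m}$, using the elementary identity $\binom{k}{l_1}\binom{k-l_1}{l_2,\dots,l_m}=\binom{k}{l_1,\dots,l_m}$ and the sign $(-1)\cdot(-1)^{m-1}$ from the minus in~\eqref{eq:sk-rec} and the sign in the hypothesis; the $m=1$ word $\inv a\,\dd^k(a)\,\inv a$ comes only from $j=k$, with coefficient $-1=(-1)^1\binom{k}{k}$. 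Summing over all compositions of $k$ recovers~\eqref{eq:itquot} for $k$, completing the induction. The one point that needs genuine care --- and the mild obstacle here --- is verifying that the domain conditions legitimize applying $\dd$ termwise and invoking the Leibniz rule at each stage; this is handled uniformly by the inverse-closedness of $\mD(\dd^k)$, which places both $a$ and $\inv a$, and hence every factor $\dd^{l_i}(a)$ and $\dd^j(\inv a)$ appearing above, inside the domain of $\dd$.
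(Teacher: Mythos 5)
Your proof is correct and follows essentially the same route as the paper's: induction on $k$, the iterated Leibniz rule applied to the identity $a\,\inv a=\one$ (the paper uses the mirror identity $\inv a\, a=\one$, so the new factor is appended on the right rather than prepended on the left, but this is only a cosmetic difference), and the same multinomial identity $\binom{k}{l_1}\binom{k-l_1}{l_2,\dotsc,l_m}=\binom{k}{l_1,\dotsc,l_m}$ to collect coefficients. The only nitpick is that the induction is really a strong induction (you need the formula for all orders $k-j$ with $1\le j\le k-1$, not just for $k-1$), which is how the paper phrases its substitution as well.
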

Since we do not know a precise reference for this rule, we provide a
proof in the appendix. A more general formula is contained  in
\cite{klotz12pre}. 

\begin{prop}\label{ncideralg}
  If \mA\ is a symmetric \BA\ with an unbounded,  closed  derivation $\dd$ and $k
  \in \bn$, then $\mD(\dd^k)$ admits \nci\ in \mA , and  
  \begin{equation}
\abs{\inv a}_{\mD(\dd^k)} \leq \norm{\inv{a}}_\mA^2
\abs{a}_{\mD(\dd^k)} \max \Big( k,  \frac{\norm{\inv a}_\mA^k
  \abs{a}_{\mD(\dd^k)}^k-1}{ \norm{\inv
    a}_\mA\abs{a}_{\mD(\dd^k)}-1} \Big) \, .
\label{eq:ncideri}
\end{equation}
If $\norm{ \inv a}_\mA \abs{a}_{\mD(\dd^k)} > \max(2,k) $  then this expression can be simplified to
  \begin{equation}
    \label{eq:2}
   \abs{ \inv a }_{\mD(\dd^k)} \leq 2 \norm{\inv a }_\mA ^{k+1}
   \abs{a} _{\mD(\dd^k)}^{k}  \, .
  \end{equation}
Asymptotically, if $|a|_{\mD(\dd^k)} \leq 1$ and $\norm{\inv a }_\mA
\leq \inv \delta   $, then 
$$
|\inv a |_{\mD(\dd^k)} \leq 2 \delta ^{-k-1} \, .
$$
\end{prop}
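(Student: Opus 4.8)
The plan is to apply the iterated quotient rule of Lemma~\ref{invnormderiv} to each power $\dd^j$, $1\le j\le k$, take $\mA$-norms, divide by $j!$, sum over $j$, and collapse the resulting multi-index sums into a geometric series. Note first that $\inv a\in\mD(\dd^k)$ --- this is the (already known) inverse-closedness of $\mD(\dd^k)$ in $\mA$ --- so $\inv a$ also lies in every $\mD(\dd^j)$ with $j\le k$, and \eqref{eq:itquot} may be used with $k$ replaced by any such $j$.

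Fix $j$ and apply submultiplicativity of $\norm{\cdot}_\mA$ in \eqref{eq:itquot}: the summand indexed by $m$ and $(j_1,\dots,j_m)$ is bounded by $\binom{j}{j_1,\dots,j_m}\norm{\inv a}_\mA^{m+1}\prod_{i=1}^m\norm{\dd^{j_i}(a)}_\mA$, because the product $\prod_i[\inv a\,\dd^{j_i}(a)]$ together with the trailing $\inv a$ contributes exactly $m+1$ factors $\inv a$. Dividing by $j!$, using $\binom{j}{j_1,\dots,j_m}/j!=1/(j_1!\cdots j_m!)$, summing over $j=1,\dots,k$, and interchanging the summations so that $m$ is outermost (for each fixed $m$ the inner sums over compositions of $j$ with $m\le j\le k$ amalgamate into a single sum over $j_i\ge1$ subject only to $j_1+\dots+j_m\le k$), one obtains
\begin{equation*}
\abs{\inv a}_{\mD(\dd^k)}\le\sum_{m=1}^k\norm{\inv a}_\mA^{m+1}\sum_{\substack{j_1+\dots+j_m\le k\\ j_i\ge1}}\ \prod_{i=1}^m\frac{\norm{\dd^{j_i}(a)}_\mA}{j_i!}\,.
\end{equation*}
Dropping the constraint $j_1+\dots+j_m\le k$ (keeping $1\le j_i\le k$) lets the product factor, so each inner multiple sum is at most $\bigl(\sum_{j=1}^k\norm{\dd^j(a)}_\mA/j!\bigr)^m=\abs{a}_{\mD(\dd^k)}^m$. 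Setting $x=\norm{\inv a}_\mA\abs{a}_{\mD(\dd^k)}$, this reads $\abs{\inv a}_{\mD(\dd^k)}\le\norm{\inv a}_\mA\sum_{m=1}^k x^m$.

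It remains to estimate the geometric sum: $\sum_{m=1}^k x^m=x(1+x+\dots+x^{k-1})$, and since $1+x+\dots+x^{k-1}=(x^k-1)/(x-1)$ is $\le k$ for $x\le1$ and $\ge k$ for $x\ge1$, one has $\sum_{m=1}^k x^m\le x\max\!\bigl(k,(x^k-1)/(x-1)\bigr)$ in all cases (the quotient being read as $k$ when $x=1$). Substituting $x$ and $\norm{\inv a}_\mA x=\norm{\inv a}_\mA^2\abs{a}_{\mD(\dd^k)}$ gives \eqref{eq:ncideri}; together with $\norm{\inv a}_{\mD(\dd^k)}=\norm{\inv a}_\mA+\abs{\inv a}_{\mD(\dd^k)}$ and $\abs{a}_{\mD(\dd^k)}\le\norm{a}_{\mD(\dd^k)}$ this exhibits an explicit control function, so $\mD(\dd^k)$ admits \nci\ in $\mA$. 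For the simplification, if $x>\max(2,k)$ the maximum in \eqref{eq:ncideri} equals $(x^k-1)/(x-1)$, and $x>2$ gives $x-1>x/2$, whence $(x^k-1)/(x-1)<x^k/(x-1)<2x^{k-1}$; inserting this produces \eqref{eq:2}. The asymptotic bound then follows by putting $\abs{a}_{\mD(\dd^k)}\le1$ and $\norm{\inv a}_\mA\le1/\delta$ into \eqref{eq:ncideri}, whose right-hand side is non-decreasing in these quantities, and letting $\delta\to0$. I do not expect a real obstacle here: given Lemma~\ref{invnormderiv} the argument is essentially bookkeeping, the only mildly delicate points being the combinatorial identity $\binom{j}{j_1,\dots,j_m}/j!=1/(j_1!\cdots j_m!)$ and the interchange of summations, after which the product factors and everything collapses to a geometric series in $x$; the one genuinely hard ingredient, the iterated quotient rule \eqref{eq:itquot}, is proved separately in the appendix and may be taken for granted.
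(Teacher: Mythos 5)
Your proposal is correct and follows essentially the same route as the paper: apply the iterated quotient rule of Lemma~\ref{invnormderiv} to each $\dd^j(\inv a)$, use submultiplicativity and $\binom{j}{j_1,\dots,j_m}/j!=1/(j_1!\cdots j_m!)$, interchange the sums, relax the composition constraint so the product factors into $\abs{a}_{\mD(\dd^k)}^m$, and sum the resulting geometric series. The subsequent simplification for $x>\max(2,k)$ and the asymptotic bound are handled the same way as in the paper.
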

\begin{proof}
We use the iterated quotient rule (Lemma~\ref{invnormderiv}). Then 
  \begin{align*}
    \abs{\inv a}_{\mD(\dd^k)}&=\sum_{m=1}^k \frac{\norm{\dd^m(\inv a)}_\mA}{m!} \\
    & \leq \sum_{m=1}^k\frac{1}{m!}\sum_{n=1}^m\sum_{\substack{m_1+
        \dotsb + m_n=m \\ m_j \geq 1}} \binom{m}{m_1, \dotsc,m_n}
    \bignorm{\Bigl(\prod_{i=1}^n \inv a \dd^{m_i}(a) \Bigr) \, \inv
      a}_\mA \\ 
    & \leq \sum_{m=1}^k \sum_{n=1}^m \norm{\inv
      a}_\mA^{n+1}\sum_{\substack{m_1+ \dotsb + m_n=m \\ m_j \geq 1}}\,
    \prod_{i=1}^n \frac{\norm{\dd^{m_i}(a) }_\mA}{m_i !} \\ 
    & =    \sum_{n=1}^k \norm{\inv a}_\mA^{n+1} \sum_{m=n}^k \,
    \sum_{\substack{m_1+ \dotsb  + m_n=m \\ m_j \geq 1}} \, \prod_{i=1}^n
    \frac{\norm{\dd^{m_i}(a) }_\mA}{m_i !} \\
&\leq \sum_{n=1}^k \norm{\inv a}_\mA^{n+1} \sum_{(m_1, \dots , m_n)
  \in \{1,\dots , k\}^n}  \,
 \prod_{i=1}^n
    \frac{\norm{\dd^{m_i}(a) }_\mA}{m_i !}
  \end{align*}
The summation over $(m_1, \dots , m_n)
  \in \{1,\dots , k\}^n$ yields 
$
(\sum _{m=1}^k \frac{\norm{\dd^{m}(a) }_\mA}{m !})^n =
|a|_{\mD(\dd^k)} ^n$, and so 
\begin{align*}
  \abs{\inv a}_{\mD(\dd^k)} &\leq    
    \sum_{n=1}^k \norm{\inv a}_\mA^{n+1} \abs{a}_{\mD(\dd^k)} ^n \\
    & =   \norm{\inv a}_\mA^{2} \abs{a}_{\mD(\dd^k)}  \sum_{n=0}^{k-1} \norm{\inv a}_\mA^{n} \abs{a}_{\mD(\dd^k)} ^n \\
    & \leq \norm{\inv{a}}_\mA^2\abs{a}_{\mD(\dd^k)} \max \Big( k,  \frac{\norm{\inv a}_\mA^k  \abs{a}_{\mD(\dd^k)}^k-1}{ \norm{\inv a}_\mA\abs{a}_{\mD(\dd^k)}-1} \Big) \, ,
  \end{align*}
  and this is what we wanted to show. 
The simplified expression follows by writing $\frac{q^k-1}{q-1} =
\frac{q^{k-1}-1/q}{1-1/q} \leq q^{k-1}$ for $q=\norm{\inv
  a}_\mA\abs{a}_{\mD(\dd^k)}\geq \max (2,k)$. 
\end{proof}
\begin{rem}
  The proposition also follows as a  corollary of the technical  Theorem~\ref{thm-dd-nci}.
\end{rem}




\subsection{Automorphism groups and norm-controlled inversion of Besov algebras}
\label{sec:besov-algebras}
For us a (one-parameter) \emph{automorphism group}
 on \mA\    is a group  of  uniformly bounded  automorphisms $\Psi=\set{\psi_t}_{t \in \br}$ of
\mA . This means that (a)  each $\psi _t: \mA \to \mA $ is a Banach algebra
automorphism of $\mA $, (b)  
$
  \psi_s \psi_t=\psi_{s+t} \quad \text{for all} \quad s,t \in \br
$
and  (c) the $\psi _t$ are uniformly bounded, i.e. 
$
M_\Psi= \sup_{t\in \br}\norm{\psi_t}_{\mA \to \mA} < \infty \, .
$
If \mA\ is a $*$-algebra we assume that $\Psi$ consists of $*$-automorphisms.  

The subalgebra $C(\mA )$ of continuous elements consists of all  elements $a
\in \mA$ satisfying $\lim_{t \to 0}\psi_t(a)=a$.

The \emph{generator} $\dd$ of $\Psi$ is defined pointwise by
$
  \dd  (a) = \lim_{h \to 0} \frac{\psi_{h}(a)-a }{h}
$. 
It is a closed derivation, and
the domain $\mD(\dd, \mA)$ of $\dd$ is the set of all $a \in \mA$ for which this limit exists. %
If \mA\ is a $*$-algebra, then $\dd$ is symmetric. See, e.g.~\cite{Butzer67}.


By means of  the automorphism group $\Psi$ we can  define the  Besov
spaces on the \BA\ \mA\ in complete analogy to the classical Besov
spaces on $\br $. Of the many equivalent definitions  we choose the
following one, see, e.g.,~\cite{Butzer67}. Let $\Delta _t a = \psi
_t(a)  - a$ be the  difference operator and   $\Delta_t^k=  (\psi_t
-\id)^k$ be the higher differences. Assume that  $1\leq p < \infty $,
$r>0$, and define $k=\lceil r \rceil $ as the smallest integer greater than or equal to $r$.  Then the Besov space $\besov p r
\mA $   consists of all $a\in \mA $ for which   the expression
\begin{equation}\label{eq:besovnorm}
\norm{a}_{\besov p r\mA} =  \norm{a}_\mA + \Bigl(\int_{\br} (\abs{t}^{-r}\norm{\Delta^k_t a}_\mA)^p \frac{dt }{\abs t}\Bigr)^{1/p} = \norm{a}_\mA + \abs{a}_{\besov p r\mA}
\end{equation}
is finite. For $p =  \infty $, we use the supremum, as usual. Then 
\eqref{eq:besovnorm} is  a norm on $\besov p r \mA$ and $\besov p r
\mA $ is a Banach space. 
If  $k=\lceil r \rceil $ is replaced  by any  integer $k_1 > r$ in the preceding
definition, then one obtains  an equivalent norm on  $\besov p r \mA$. 

The main properties concerning the 
algebraic structure  were derived  in~\cite{klotz12}. 
\begin{prop} [{\cite[Thm.~3.7, 3.8]{klotz12}}] \label{prop:baalg}
Let $\mA $ be a \BA\ with a uniformly bounded automorphism group $\Psi
$, $1\leq  p \leq \infty $, and $r>0$.  

(i) Then   $\besov p r \mA$ is a Banach subalgebra of $\mA $. 

(ii) $\Psi$ is an automorphism group on $\besov p r \mA$, and $\norm{\psi_t (a)}_{\besov p r \mA} \leq M_\Psi \norm{ a}_{\besov p r \mA}$.

(iii)  $\besov p r \mA$ is   \IC\ in \mA . 

(iv) \label{thm-reiteration}  Reiteration theorem: If $1 \leq p,q \leq \infty$ and $r,s >0$ then
  \begin{equation}\label{eq:48}
    \Lambda^q_s(\Lambda^p_r(\mA))=\Lambda^q_{r+s}(\mA) \,.
  \end{equation}
 \end{prop}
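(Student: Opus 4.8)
The plan is to transplant the classical theory of Besov spaces (Peetre, Butzer--Berens) to the Banach-algebra setting, following \cite{klotz12}: I would establish (i) and (ii) first by elementary estimates, then the reiteration theorem (iv), and finally deduce (iii) either directly or as a short consequence of (iv). Throughout I would use two standard facts about the scale $\besov p r \mA$: that the seminorm in \eqref{eq:besovnorm} may be computed with differences of any fixed integer order $k_1 > r$, yielding an equivalent norm; and a Marchaud-type inequality $\omega_j(a,\tau)\lesssim \tau^{\min(j,r)}\norm{a}_{\besov p r\mA}$ for $0<\tau\le 1$ and $1\le j<k_1$, where $\omega_j(a,\tau)=\sup_{\abs{s}\le\tau}\norm{\Delta_s^j a}_\mA$, the abstract version being proved exactly as on $\br$ (Marchaud's inequality together with Hardy's inequality).

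For (i) and (ii) I would argue as follows. Completeness is routine: the map $a\mapsto(a,\ t\mapsto\abs{t}^{-r}\Delta_t^{k_1}a)$ embeds $\besov p r\mA$ isometrically into $\mA\oplus L^p(\br,dt/\abs t;\mA)$, and the image is closed because each $\Delta_t^{k_1}$ is bounded on $\mA$. For the algebra property I would use the discrete Leibniz rule $\Delta_t^{k_1}(ab)=\sum_{j=0}^{k_1}\binom{k_1}{j}(\Delta_t^j a)(\psi_t^j\Delta_t^{k_1-j}b)$, split the defining integral at $\abs t=1$, bound the tail trivially from $\norm{\Delta_t^{k_1}(ab)}_\mA\le(1+M_\Psi)^{k_1}\norm{a}_\mA\norm{b}_\mA$ and $\int_{\abs t>1}\abs t^{-rp}\,dt/\abs t<\infty$, and near $t=0$ isolate the two endpoint terms $j\in\{0,k_1\}$ (which reproduce $\norm{a}_\mA\abs{b}_{\besov p r\mA}$ and $M_\Psi^{k_1}\abs{a}_{\besov p r\mA}\norm{b}_\mA$) while controlling the intermediate terms via the Marchaud bound: since $1\le j\le k_1-1$ forces $\min(j,r)+\min(k_1-j,r)>r$, the factor $\abs t^{-r}\norm{\Delta_t^j a}_\mA\norm{\Delta_t^{k_1-j}b}_\mA$ gains a strictly positive power of $\abs t$ and is integrable against $dt/\abs t$ near $0$. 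Part (ii) is then immediate: $\psi_s$ commutes with every $\psi_t$, hence with $\Delta_t^{k_1}$, and being an algebra automorphism it gives $\Delta_t^{k_1}(\psi_s a)=\psi_s(\Delta_t^{k_1}a)$, so $\abs{\psi_s a}_{\besov p r\mA}\le M_\Psi\abs{a}_{\besov p r\mA}$ and $\norm{\psi_s a}_{\besov p r\mA}\le M_\Psi\norm{a}_{\besov p r\mA}$; the group law and multiplicativity are inherited from $\mA$.

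Reiteration (iv) is the technical heart, and the step I expect to be the main obstacle if one insists on a self-contained proof. I would prove it exactly as the classical Besov reiteration theorem, working with the modulus of smoothness $\omega_k(a,\cdot)_\mA$ (comparable to the integral in \eqref{eq:besovnorm}) and the second-order modulus computed in the $\besov p r\mA$-norm. After discretizing the integrals, both inclusions in \eqref{eq:48} reduce to monotonicity and sub-/super-additivity of moduli, Marchaud's inequality (to pass between orders of differences), and Hardy's inequalities for sums; the only feature special to this setting that is needed to run the classical argument is that $\Psi$ acts boundedly on $\besov p r\mA$, which is (ii), and that $\besov p r\mA$ is itself a Banach algebra carrying an automorphism group, which is (i)--(ii). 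I would cite \cite{klotz12} (and \cite{Butzer67}) for the modulus-of-smoothness bookkeeping rather than reproduce it.

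Finally, (iii). The short route: by (iv) applied repeatedly (using (i)--(ii) at each stage) write $\besov p r\mA=\besov p{s_m}{\besov p{s_{m-1}}{\cdots\besov p{s_1}\mA}}$ with $0<s_i<1$ and $\sum s_i=r$; for $0<s<1$ one has $k_1=1$, and the one-line quotient rule $\Delta_t(\inv a)=-\psi_t(\inv a)\,\Delta_t(a)\,\inv a$ (valid since $\psi_t$ is an automorphism) gives $\norm{\Delta_t(\inv a)}_\mA\le M_\Psi\norm{\inv a}_\mA^2\norm{\Delta_t(a)}_\mA$, whence $\abs{\inv a}_{\besov p s\mA}\le M_\Psi\norm{\inv a}_\mA^2\abs{a}_{\besov p s\mA}$; thus $\besov p s{(\cdot)}$ is inverse-closed, and inverse-closedness is transitive along the chain. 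Alternatively I could argue directly in $\besov p r\mA$ via the difference-operator analogue of the iterated quotient rule of Lemma~\ref{invnormderiv} (see \cite{klotz12pre}), expanding $\Delta_t^{k_1}(\inv a)$ into a finite sum of products of shifted copies $\psi_{jt}(\inv a)$ and factors $\Delta_t^{k_i}(a)$ with $\sum k_i=k_1$; taking $\mA$-norms, using $\norm{\psi_{jt}(\inv a)}_\mA\le M_\Psi\norm{\inv a}_\mA$, noting that the top term reproduces the seminorm of $a$ directly, and estimating the remaining products $\prod_i\norm{\Delta_t^{k_i}(a)}_\mA$ by the Marchaud bound for $\abs t\le1$ (where $\sum_i\min(k_i,r)>r$) and trivially for $\abs t>1$, one obtains $\inv a\in\besov p r\mA$ together with an explicit norm bound. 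Either way, once (i), (ii), (iv) are established, (iii) costs little.
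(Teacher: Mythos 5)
Your proposal is correct and follows essentially the same route as the source: the paper itself gives no proof of Proposition~\ref{prop:baalg} but cites \cite[Thm.~3.7, 3.8]{klotz12}, where (i), (ii) and the reiteration theorem (iv) are obtained exactly by transplanting the classical Butzer--Berens/Marchaud machinery you describe, and (iii) is deduced from the first-order quotient rule $\Delta_t(\inv a)=-\psi_t(\inv a)\Delta_t(a)\inv a$ together with reiteration. Your two alternative routes to (iii) are precisely the two arguments the paper itself runs later in quantitative form (Proposition~\ref{prop:besov-nci-basic} via the reiteration chain, Theorem~\ref{thm-besov-nci} via the iterated quotient rule of Lemma~\ref{undef}), so nothing is missing.
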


Like derivations, difference operators obey a product rule and a
quotient rule. We may therefore expect that the Besov algebras also
admit some form of norm control. A first and easy result follows with
the reiteration theorem.

\begin{prop}\label{prop:besov-nci-basic}
Let $1\leq p \leq \infty $ and $r>0$. Then $\besov p r \mA $ admits
norm control, and  
\[
\norm{\inv a}_{\besov p r \mA} \leq C_{p,r} \norm{\inv{a}}_\mA^{2^{\floor r +1}} \norm{a}_{\besov p r \mA}^{2^{\floor r}} 
\]  
In particular, if   $\norm{a}_{\besov p r \mA} \leq 1$ and $\norm{\inv a}_\mA \leq 1/\delta$, then
  \begin{equation}
    \label{eq:besovncit}
    \norm{\inv a}_{\besov p r \mA} \leq C_r \delta^{-2^r}
  \end{equation}
\end{prop}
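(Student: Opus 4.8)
The plan is to prove the normalized statement of the proposition and then recover the general inequality from the control function \eqref{eq:c1}. First I would settle the case $0<r\le 1$ by a quotient rule for the first difference operator; then I would bootstrap to arbitrary $r>0$ with the reiteration theorem, peeling off one unit of smoothness at a time. The essential bookkeeping device is that norm-control exponents compose \emph{multiplicatively} along a chain of nested algebras: if $\mC\subseteq\mA_1\subseteq\mA_0$, and the norm-control function of $\mC$ in $\mA_1$ is $\delta\mapsto\delta^{-\alpha}$ while that of $\mA_1$ in $\mA_0$ is $\delta\mapsto\delta^{-\beta}$, then, after normalizing $\norm{a}_\mC\le1$ and using that the norms increase along the chain, $\mC$ admits norm control in $\mA_0$ with function $\delta\mapsto\delta^{-\alpha\beta}$.

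For $0<r\le1$ we have $k=\lceil r\rceil=1$, so the Besov seminorm in \eqref{eq:besovnorm} is built from $\Delta_t=\psi_t-\id$. Since $\psi_t$ is an automorphism, $\psi_t(\inv a)=\psi_t(a)^{-1}$, and the resolvent identity gives the quotient rule $\Delta_t(\inv a)=-\psi_t(\inv a)\,\Delta_t(a)\,\inv a$. Hence $\norm{\Delta_t(\inv a)}_\mA\le M_\Psi\norm{\inv a}_\mA^2\norm{\Delta_t(a)}_\mA$ for every $t$; inserting this into \eqref{eq:besovnorm} yields $\abs{\inv a}_{\besov p r\mA}\le M_\Psi\norm{\inv a}_\mA^2\abs{a}_{\besov p r\mA}$, and together with $\norm{\inv a}_\mA\le\norm{\inv a}_\mA^2\norm{a}_\mA$ (from $\norm{a}_\mA\norm{\inv a}_\mA\ge\norm{\one}_\mA$) this gives the asserted estimate for $0<r\le1$ with exponents $(2,1)$. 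To squeeze out the sharper power $2^r<2$ when $r<1$ one also uses the crude bound $\norm{\Delta_t(\inv a)}_\mA\le(1+M_\Psi)\norm{\inv a}_\mA$ together with the continuity $\norm{\Delta_t(a)}_\mA\to0$ as $t\to0$, and interpolates the two pointwise estimates for $\Delta_t(\inv a)$ — equivalently, one exploits that $\besov p r\mA$ is a real interpolation space between $\mA$ and $\mD(\dd)$, on which the norm-control exponent degrades continuously from $1$ (at smoothness $0$) to $2$ (at smoothness $1$).

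For $r>1$ I would write $r=1+r'$ and apply the reiteration theorem, Proposition~\ref{prop:baalg}(iv), to identify $\besov p r\mA$ (with an equivalent norm) with $\besov p1{\bigl(\besov p{r'}\mA\bigr)}$. By Proposition~\ref{prop:baalg}(ii) the group $\Psi$ acts on $\besov p{r'}\mA$, so the base case applies to the outer layer $\besov p1{(\cdot)}$ over the Banach algebra $\besov p{r'}\mA$; the multiplicativity of norm-control exponents then drives an induction on $\lceil r\rceil$. Concretely, the $\lceil r\rceil$-fold tower $\besov p{s_m}{(\cdots(\besov p{s_1}\mA)\cdots)}$ with $m=\lceil r\rceil$, $s_j\in(0,1]$, $\sum_j s_j=r$, produces a norm-control estimate for $\besov p r\mA$ in $\mA$ with exponent equal to the product of the layer exponents: the $\lfloor r\rfloor$ unit layers each contribute a factor $2$ and the innermost fractional layer contributes $2^{\,r-\lfloor r\rfloor}$ (by the refined base case), for a total of $2^{r}$, i.e.\ $\norm{\inv a}_{\besov p r\mA}\le C_{p,r}\,\delta^{-2^{r}}$ under the normalization $\norm{a}_{\besov p r\mA}\le1$, $\norm{\inv a}_\mA\le1/\delta$. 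The norm equivalences supplied by reiteration are absorbed into $C_{p,r}$. The general inequality of the proposition then follows by substituting $\phi(\delta)=C_{p,r}\delta^{-2^r}$ into \eqref{eq:c1}.

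I expect the fractional base case to be the main obstacle: the bare quotient rule only delivers the exponent $2$, and extracting the sharp $2^{\,r-\lfloor r\rfloor}$ requires the interpolation argument. An alternative route to the same point is to run the reiteration tower with layers of vanishing thickness $r/n$, so that the composed exponent becomes $\bigl(2^{r/n}\bigr)^{n}=2^{r}$ in the limit; but then one must verify that the accumulated constants $C_{p,r/n}^{\,(2^{r}-1)/(2^{r/n}-1)}$ stay bounded as $n\to\infty$, which relies on the norm-control constant of $\besov p s\mA$ tending to $1$ as $s\to0$. Everything else — the quotient rule itself, the multiplicativity of exponents under reiteration, and the tracking of the equivalence constants — is routine.
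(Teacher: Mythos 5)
Your base case and bootstrapping scheme are exactly the paper's: the first-difference quotient rule $\Delta_t(\inv a)=-\psi_t(\inv a)\,\Delta_t(a)\,\inv a$ gives $\abs{\inv a}_{\besov p r \mA}\le M_\Psi\norm{\inv a}_\mA^2\abs{a}_{\besov p r\mA}$ for $0<r<1$, and the reiteration theorem then composes these quadratic estimates through a tower of Besov layers. The paper uses $N=\floor r+1$ equal layers of thickness $r/N<1$ where you use unit layers plus a fractional remainder, but the mechanism and the resulting exponent $2^{\floor r+1}$ on $\norm{\inv a}_\mA$ are the same; up to that point your argument is correct and proves the displayed inequality of the proposition.

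The genuine gap is your claimed refinement of the fractional base case to the exponent $2^{\,r-\floor r}<2$. The interpolation heuristic --- that the norm-control exponent of $\besov p s\mA$ in $\mA$ degrades continuously from $1$ at $s=0$ to $2$ at $s=1$ along the curve $s\mapsto 2^s$ --- is not a theorem: real interpolation of the \emph{spaces} says nothing about the nonlinear inversion map, and no such statement appears in the paper. Worse, it is false. By convexity $2^s<s+1$ for $0<s<1$, while the Toeplitz example $\widetilde C_\gamma$ of Section~\ref{sec:sharpness-estimates} satisfies $\norm{\widetilde C_\gamma}_{\mC_s}=1$ and $\norm{\inv {\widetilde C}_\gamma}_{\mC_s}\asymp\delta^{-s-1}$ with $\mC_s=\besov 1 s{\mC_0}$, so no control function of order $\delta^{-2^s}$ can exist for $0<s<1$. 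The same objection kills your limiting tower with layers of thickness $r/n$: each layer genuinely contributes a factor of $2$, not $2^{r/n}$, so the composed exponent is $2^n\to\infty$ rather than $2^r$. You should drop the refinement, accept the exponent $2$ for the fractional layer, and read the proposition's ``in particular'' clause as $\delta^{-2^{\floor r+1}}$, which is all the paper's own proof yields; the literal $\delta^{-2^r}$ comes out only when the tower consists of unit layers, i.e.\ for integer $r$. (The substantially better rate $\delta^{-\floor r-2}$ is obtained in Theorem~\ref{thm-besov-nci} by an entirely different device, the iterated quotient rule for higher differences, not by sharpening the one-step estimate.)
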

\begin{proof}
(a) Let $0<r<1$.   The appropriate  quotient rule for the difference
operator  is given by 
  \begin{equation} \label{eq:invderiv}
    \Delta_t(a^{-1})=-\psi_t(a^{-1}) \; \Delta_t(a) \; a^{-1} \, , 
  \end{equation}
  after  a straightforward computation.   Applying  the Besov-norm on
  both sides, we obtain the norm control
  \begin{equation}
    \label{eq:ncbesov}
    \abs{\inv a}_{\besov p r \mA} \leq M_\Psi \norm{\inv{a}}_\mA^2
    \abs{a}_{\besov p r \mA} \, .
  \end{equation}
Using $\norm{\inv a}_\mA \norm{a}_{\besov p r \mA} \geq \norm{\inv
  a}_\mA \norm{a}_{ \mA} \geq  1$,  the last relation yields 
\begin{equation}
  \label{eq:{ncbesov1}}
  \norm{\inv a}_{\besov p r \mA} \leq 2 M_\Psi \norm{\inv{a}}_\mA^2
    \norm{a}_{\besov p r \mA} \, .
\end{equation}

(b) If $r\geq 1$, set $N= \floor r + 1$ and $s =r/N = r/(\floor r +1)
< 1$. Now we apply the reiteration theorem $\besov p {Ns} \mA = \besov p
s { \besov p {(N-1)s} \mA }$, part (ii) of Prop.~\ref{prop:baalg} and step (a)  repeatedly and obtain 
\begin{align*}
  \norm{\inv a}_{\besov p r \mA } & \leq 2 M_\Psi \norm{\inv a}^2_{\besov p {(N-1)s} \mA }
  \, \norm{a}_{\besov p {Ns} \mA } \\
&\leq 2M_\Psi \Big( (2M_\Psi \norm{\inv a }_{\besov p {(N-2)s} \mA }
\norm{a}_{\besov p {(N-1)s} \mA } \Big)^2 \norm{a}_{\besov p {r} \mA }
\\
&\leq C_1 (2M_\Psi )^3 \, \norm{\inv a }_{\besov p {(N-2)s} \mA }^4\,
\norm{a}_{\besov p {r} \mA } ^3 \\
& \leq \dots  \leq C_{N} (2M_\Psi )^{2^N-1} \, \norm{\inv a}_\mA ^{2^N}
\norm{a}_{\besov p r \mA}^{2^N-1} 
 \, . 
\end{align*}
The intermediate constants $C_1, \dots , C_N$  come from the embedding of $\besov p
{(N-l)s} \mA $ in $\besov p {r} \mA $. In the final step we have used
part (a) of the proof. 
\end{proof}
A much better estimate can be derived with an iterated quotient rule
 for higher differences that is  similar to Lemma~\ref{invnormderiv}. 

\begin{lem} \label{undef}
If  $a\in \mA $ is invertible, then 
  \begin{equation}
    \label{eq:15}
    \Delta_t^k(\inv a)=\psi_{kt}(\inv a) \sum_{m=1}^k(-1)^m 
\sum_{\substack{k_1+\dotsc k_m=k\\ k_j\geq1}} \binom{k}{k_1,\dotsc,k_m} \prod_{j=1}^m \psi_{(k -\sum_{l=1}^j k_l)t}\bigl((\Delta^{k_j}a) \inv a \bigr)
  \end{equation}
\end{lem}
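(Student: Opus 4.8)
The plan is to follow the same pattern as the iterated quotient rule for derivations (Lemma~\ref{invnormderiv}), modified by the fact that the difference operator obeys a \emph{skewed} Leibniz rule
\[
  \Delta_t(ab)=\psi_t(a)\,\Delta_t(b)+\Delta_t(a)\,b ,
\]
an immediate consequence of $\psi_t(ab)=\psi_t(a)\,\psi_t(b)$. Iterating this, and using that $\Delta_t$ commutes with every $\psi_s$, a short induction on $k$ (with Pascal's identity) yields the higher skewed Leibniz rule
\[
  \Delta_t^k(ab)=\sum_{j=0}^k\binom{k}{j}\,\psi_{jt}\!\bigl(\Delta_t^{\,k-j}a\bigr)\,\Delta_t^{\,j}(b),\qquad k\in\bno .
\]

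Next I would specialize this to $b=\inv a$, so that $ab=\one$ and $\Delta_t^k(\one)=0$ for $k\ge 1$, since automorphisms fix the unit. Splitting off the $j=k$ summand $\psi_{kt}(a)\,\Delta_t^k(\inv a)$ and multiplying on the left by $\psi_{kt}(a)^{-1}=\psi_{kt}(\inv a)$ (here one uses that $\psi_{kt}$ is an algebra automorphism) gives the recursion
\[
  \Delta_t^k(\inv a)=-\,\psi_{kt}(\inv a)\sum_{i=1}^k\binom{k}{i}\,\psi_{(k-i)t}\!\bigl(\Delta_t^{\,i}a\bigr)\,\Delta_t^{\,k-i}(\inv a),
\]
after re-indexing by $i=k-j$ and using $\binom{k}{k-i}=\binom{k}{i}$.

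Finally, formula~\eqref{eq:15} is obtained by induction on $k$: the base case $k=1$ is exactly~\eqref{eq:invderiv}. For the inductive step, substitute the induction hypothesis for $\Delta_t^{\,k-i}(\inv a)$ when $1\le i\le k-1$, and $\Delta_t^{0}(\inv a)=\inv a$ when $i=k$. The one point that needs care is the bookkeeping of the $\psi$-shifts: using that $\psi_{(k-i)t}$ is multiplicative, $\psi_{(k-i)t}(\Delta_t^{\,i}a)$ absorbs the leading factor $\psi_{(k-i)t}(\inv a)$ of the hypothesis into $\psi_{(k-i)t}\!\bigl((\Delta_t^{\,i}a)\inv a\bigr)$, which becomes the first factor of a product corresponding to the composition $(k_1,\dots,k_{m+1})=(i,k_1',\dots,k_m')$ of $k$; the remaining factors retain the exponents $(k-i)-\sum_{l\le p}k_l'=k-\sum_{l\le p+1}k_l$ prescribed by~\eqref{eq:15}, the multinomial coefficient factors as $\binom{k}{i}\binom{k-i}{k_1',\dots,k_m'}=\binom{k}{k_1,\dots,k_{m+1}}$, and the extra sign $-(-1)^m=(-1)^{m+1}$ accounts for the new block. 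Summing over $i=1,\dots,k$ and over all compositions of $k-i$ then produces exactly the right-hand side of~\eqref{eq:15}. I expect this shift-bookkeeping in the inductive step to be the only genuine obstacle; an alternative route would be to expand $\Delta_t^k=(\psi_t-\id)^k$ directly and rearrange, but the recursion above keeps the combinatorics cleanest and parallels the appendix proof of Lemma~\ref{invnormderiv}.
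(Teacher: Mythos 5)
Your proposal is correct and follows essentially the same route as the paper's appendix proof: establish the skewed higher Leibniz rule $\Delta_t^k(ab)=\sum_{l}\binom{k}{l}\psi_{(k-l)t}(\Delta_t^l a)\Delta_t^{k-l}b$ by induction, apply it to $a\inv a=\one$, solve for the top-order term after multiplying by $\psi_{kt}(\inv a)$, and then insert the induction hypothesis with the multinomial, sign, and $\psi$-shift bookkeeping you describe (your recursion is the paper's after the reindexing $l=k-i$). No gaps.
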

The proof is purely algebraic and is given  in the  appendix. Using
Lemma~\ref{undef}, we obtain the following improved version of \nci\ in
$\besov p r \mA $. 
\begin{thm}\label{thm-besov-nci}
Let $\mA $ be a \BA\ with a one-parameter  automorphism group
$\Psi$. Then the  Besov algebra $\besov p  r \mA$ with  $1\leq p \leq \infty $, $r> 0$,  admits the following norm control
in $\mA $:
\begin{equation}
  \label{eq:sep2}
  |\inv a |_{\besov p  r \mA} \leq C   \norm{\inv a}_\mA^2
  \norm{a}_{\besov p r \mA} \frac{ \norm{\inv a}_\mA^{ \lfloor r \rfloor   +1}
    \norm{a}_{\besov p {r} \mA }^{\lfloor r \rfloor   +1} -1}{\norm{\inv a}_\mA
    \norm{a}_{\besov p {r} \mA }-1}  \, ,
\end{equation}
with  a constant $C>0$ depending on $r$ and $p$. 

In particular,  if $\norm{a}_{\besov p  r \mA} \leq 1$, and $\norm{\inv a}_\mA \leq 1/\delta$, then 
  \[
  \norm{\inv a}_{\besov p r \mA} \leq C \delta^{-\floor r-2}
  \]

\end{thm}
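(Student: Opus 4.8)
The plan is to mimic the proof of Proposition~\ref{ncideralg} but using the iterated quotient rule for higher differences (Lemma~\ref{undef}) in place of Lemma~\ref{invnormderiv}. First I would apply the $\besov p r \mA$-seminorm to the identity \eqref{eq:15}. Since $\Psi$ is a uniformly bounded automorphism group on $\mA$ (with constant $M_\Psi$), each factor $\psi_{(k-\sum k_l)t}\bigl((\Delta^{k_j}a)\inv a\bigr)$ has $\mA$-norm at most $M_\Psi \norm{\Delta^{k_j}_t a}_\mA \norm{\inv a}_\mA$, and the leading $\psi_{kt}(\inv a)$ contributes $M_\Psi \norm{\inv a}_\mA$. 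Pulling the supremum/integral in $t$ through the sum over compositions $k_1+\dots+k_m=k$, and noting that $\Delta^{k_j}_t a$ enters only through $\norm{\Delta^{k_j}_t a}_\mA$, the key point is that one of the difference factors carries the "smoothness weight" $\abs t^{-r}$ while the others are merely bounded by $\norm{a}_\mA$ uniformly in $t$ (or, better, split $k=r$ worth of smoothness across the factors and control the leftover differences by lower-order Besov seminorms, which embed into $\besov p r \mA$). This produces the bound
\[
\abs{\inv a}_{\besov p r \mA} \le C \sum_{m=1}^{k} M_\Psi^{m+1}\, \norm{\inv a}_\mA^{m+1}\, \norm{a}_{\besov p r \mA}^{m}\,,
\]
where $k=\lceil r\rceil$, exactly as in the derivation case but with $M_\Psi$-powers absorbed into $C$.

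Second, I would sum the geometric-type series. Factoring out $\norm{\inv a}_\mA^2 \norm{a}_{\besov p r \mA}$ leaves $\sum_{n=0}^{k-1} (\norm{\inv a}_\mA \norm{a}_{\besov p r \mA})^n = \dfrac{q^{k}-1}{q-1}$ with $q=\norm{\inv a}_\mA\norm{a}_{\besov p r \mA}$. Since $k=\lceil r\rceil = \lfloor r\rfloor +1$ when $r\notin\bn$ (and the statement is written with $\lfloor r\rfloor+1$, which equals $\lceil r\rceil$ except at integers, where one passes to $k_1>r$ by the equivalence of norms remarked after \eqref{eq:besovnorm}), this gives precisely \eqref{eq:sep2}. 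The asymptotic statement then follows as before: if $\norm{a}_{\besov p r \mA}\le 1$ and $\norm{\inv a}_\mA\le 1/\delta$, then using $q=\norm{\inv a}_\mA\norm{a}_{\besov p r \mA}\ge 1$ one has $\dfrac{q^{\lfloor r\rfloor+1}-1}{q-1}\le (\lfloor r\rfloor+1)\,q^{\lfloor r\rfloor}$, so $\abs{\inv a}_{\besov p r \mA}\le C\,\norm{\inv a}_\mA^{\lfloor r\rfloor+2}\,\norm{a}_{\besov p r \mA}^{\lfloor r\rfloor+1}\le C\delta^{-\lfloor r\rfloor-2}$, and combining the seminorm bound with $\norm{\inv a}_\mA\le 1/\delta\le\delta^{-\lfloor r\rfloor-2}$ gives the full-norm estimate.

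The main obstacle is the bookkeeping in the first step: in \eqref{eq:15} \emph{every} factor is a higher difference $\Delta^{k_j}_t a$, so naively applying the Besov norm would force the weight $\abs t^{-r}$ onto only one factor and leave the remaining $\prod_{i\ne j}\norm{\Delta^{k_i}_t a}_\mA$, which is $O(\norm{a}_\mA^{m-1})$ pointwise but must be shown to be integrable against $\abs t^{-1}\,dt$ together with the weighted factor. The clean way is to bound all but one difference by $2^{k_i}M_\Psi^{k_i}\norm{a}_\mA$ (a crude uniform-in-$t$ bound, since $\norm{\Delta^{k_i}_t a}_\mA \le (1+M_\Psi)^{k_i}\norm{a}_\mA$) and to keep the weight on a single factor $\Delta^{k_j}_t a$ with the smallest $k_j$; then $\int_\br(\abs t^{-r}\norm{\Delta^{k_j}_t a}_\mA)^p\,\abs t^{-1}dt \lesssim \abs{a}_{\besov p {r}\mA}^p$ provided $k_j\ge \lceil r\rceil$, which one arranges by noting that in each composition at least one $k_j$ can be taken $\ge \lceil r/m\rceil$ and invoking the norm-equivalence for different numbers of differences, together with the lower-order embeddings $\besov p {s}\mA\hookrightarrow$ used in Proposition~\ref{prop:besov-nci-basic}. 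This is routine but needs care; everything else is a direct transcription of the derivation argument with $M_\Psi$ tracked.
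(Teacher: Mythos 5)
Your overall strategy is the paper's: apply the Besov seminorm to the iterated quotient rule of Lemma~\ref{undef}, reduce to a geometric sum in $q=\norm{\inv a}_\mA\norm{a}_{\besov p r \mA}$, and read off the asymptotics. The second half of your argument (summing the series, handling $\lceil r\rceil$ versus $\lfloor r\rfloor+1$, deducing the $\delta^{-\floor r -2}$ rate) is fine. The gap is in the step you yourself flag as the main obstacle, and the resolution you commit to does not work. You propose to keep the full weight $\abs{t}^{-r}$ on a single factor $\Delta_t^{k_j}a$ and to bound the remaining factors by $C\norm{a}_\mA$ uniformly in $t$. For this you need $\int_{\br}(\abs{t}^{-r}\norm{\Delta_t^{k_j}a}_\mA)^p\,dt/\abs{t}$ to be controlled by $\abs{a}_{\besov p r \mA}^p$, which requires $k_j>r$, i.e.\ $k_j\geq\lceil r\rceil=k$. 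But in any composition $k_1+\dotsb+k_m=k$ with $m\geq 2$ parts, every $k_j\leq k-(m-1)<k$, so no factor can carry the full weight: for instance with $r=2.5$, $k=3$ and the composition $1+1+1$, each factor is a first difference, and $\abs{t}^{-2.5}\norm{\Delta_t a}_\mA$ is not integrable against $dt/\abs{t}$ near $t=0$ for a generic $a\in\besov p r\mA$ (a first difference decays no faster than $O(\abs{t})$). Your patch --- ``at least one $k_j$ can be taken $\geq\lceil r/m\rceil$'' --- only licenses a weight $\abs{t}^{-s}$ with $s<\lceil r/m\rceil$ on that factor, not $\abs{t}^{-r}$, so the terms with $m\geq 2$ remain uncontrolled.

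The fix is the alternative you mention parenthetically and then discard: distribute the weight over \emph{all} factors. Since $\sum_j k_j=k>r$, one can choose $r_j>0$ with $\sum_j r_j=r$ and $r_j<k_j$ for every $j$ (e.g.\ $r_j=rk_j/k$). Writing $\abs{t}^{-r}=\prod_j\abs{t}^{-r_j}$ and putting $\abs{t}^{-r_j}$ on the $j$-th factor, one takes the $L^p(\br,dt/t)$-norm of the first factor and the supremum over $t$ of the others; because $k_j>r_j$, each resulting quantity is an equivalent seminorm for $\besov p {r_1}\mA$ resp.\ $\besov\infty{r_j}\mA$, and the embeddings $\besov p r\mA\inject\besov p{r_1}\mA$ and $\besov p r\mA\inject\besov\infty{r_j}\mA$ (valid since $r_j\leq r$) bound every factor by $\norm{a}_{\besov p r\mA}$. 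This yields $\abs{\inv a}_{\besov p r\mA}\leq C_k\norm{\inv a}_\mA\sum_{m=1}^k\norm{\inv a}_\mA^m\norm{a}_{\besov p r\mA}^m$, after which your summation and asymptotic analysis go through unchanged.
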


\begin{proof}
The iterated quotient rule  \eqref{eq:15} implies that, for every  integer $l > r$, in
  particular $k= \lfloor r \rfloor  +1$, 
  \begin{equation*}
   \norm{ \Delta_t^k(\inv a)}_\mA\leq M_\Psi ^{k+1} \norm{\inv a}_\mA
   \sum_{m=1}^k\norm{\inv a}_\mA^m \sum_{\substack{k_1+\dotsc k_m=k\\
       k_j\geq1}} \binom{k}{k_1,\dotsc,k_m} \prod_{j=1}^m{
     \norm{\Delta_t^{k_j}a}_\mA} \, .
  \end{equation*}
Now choose positive parameters $r_j$  such that
$\sum_{j=1}^m r_j=r$. Then 
 \begin{equation} \label{eq:sep4}
  \abs{t}^{-r} \norm{ \Delta_t^k(\inv a)}_\mA\leq C_k \norm{\inv a}_\mA \sum_{m=1}^k \norm{\inv a}_\mA^m \sum_{\substack{k_1+\dotsc k_m=k\\ k_j\geq1}} \binom{k}{k_1,\dotsc,k_m} \prod_{j=1}^m \frac{ \norm{\Delta_t^{k_j}a}_\mA} {\abs {t}^{r_j}} \,,
  \end{equation}
Note that $\abs {a}_{\besov p r \mA} = \bignorm{\abs{t}^{-r}
  \norm{\Delta ^k a}_\mA}_{L^p(\mathbb{R}, dt/t)}$. Taking the 
$L^p(\mathbb{R}, dt/t)$-norms on both sides of \eqref{eq:sep4}, we
obtain 
\begin{align*}
  \abs{\inv a&}_{\besov p r \mA} \leq C_k \norm{\inv a}_\mA
  \sum_{m=1}^k \norm{\inv a}_\mA^m \sum_{\substack{k_1+\dotsc k_m=k\\
      k_j\geq1}} \binom{k}{k_1,\dotsc,k_m}  
\Bignorm{\prod_{j=1}^m \frac{ \norm{\Delta_t^{k_j}a}_\mA} {\abs t
    ^{r_j}} }_{L^p(\br , dt/t)} \\
& \leq  C_k \norm{\inv a}_\mA  \sum_{m=1}^k \norm{\inv a}_\mA^m
\sum_{\substack{k_1+\dotsc k_m=k\\ k_j\geq1}}
\binom{k}{k_1,\dotsc,k_m} 
\Bignorm{ \frac{ \norm{\Delta_t^{k_1}a}_\mA} {\abs{t}^{r(k_1)}
  }}_{L^p(\br , dt/t)} \sup _{t\neq 0} \prod_{j=2}^m \frac{
  \norm{\Delta_t^{k_j}a}_\mA} {\abs t^{r_j}}\\ 
 & \leq  C_k \norm{\inv a}_\mA  \sum_{m=1}^k \norm{\inv a}_\mA^m \sum_{\substack{k_1+\dotsc k_m=k\\ k_j\geq1}}  \binom{k}{k_1,\dotsc,k_m}
 \abs{ a }_{\besov p {r(k_1)} \mA} \prod_{j=2}^m \abs{a}_{\besov
   \infty {r_j} \mA } \, . 
  \end{align*}
As $\abs{a}_{\besov p r \mA} \leq C \norm{a}_{\besov q s \mA}$ for
$r\leq s$ and $q \leq p$ (see, e.g., ~\cite{Butzer67}),   we can estimate the product in this expression by
\begin{align*}
   \abs{\inv a}_{\besov p r \mA} &\leq  C_k \norm{\inv a}_\mA  \sum_{m=1}^k \norm{\inv a}_\mA^m \sum_{\substack{k_1+\dotsc k_m=k\\ k_j\geq1}}  \binom{k}{k_1,\dotsc,k_m} \norm{a}_{\besov p {r} \mA }^m \\
                              & \leq C'_k \norm{\inv a}_\mA \sum_{m=1}^k \norm{\inv a}_\mA^m  \norm{a}_{\besov p {r} \mA }^m \\
                          & = C'_k    \norm{\inv a}_\mA^2
                          \norm{a}_{\besov p r \mA} \frac{ \norm{\inv
                              a}_\mA^k  \norm{a}_{\besov p {r} \mA }^k
                            -1}{\norm{\inv a}_\mA  \norm{a}_{\besov p
                              {r} \mA }-1}  \, .
\end{align*}
If  $\norm{a}_{\besov p r \mA}\leq 1$ and $\norm{\inv a}_\mA\leq \delta$,
the resulting  asymptotic estimate is
\begin{align*}
  \abs{\inv a}_{\besov p r \mA} &\leq  C \inv \delta \sum_{m=1}^k
  \delta^{-m} = C \delta^{-2} \frac{\delta^{-k}-1}{\inv \delta  -1} \\
 &\sim C \delta^{-k-1} = C \delta^{-\floor r -2}\quad \text{for} \quad \delta \to 0 \,.
\end{align*}
 \end{proof}
 \begin{rem}
A multivariate version of Theorem~\ref{thm-besov-nci}  remains correct, if we start with  a commutative
$d$-parameter automorphism group. One can adapt the definition of
Besov algebras by  using multivariate differences. 
 \end{rem}
\subsection{Bessel Algebras}
\label{sec:bessel-algebras}
For the definition of Bessel algebras we use an extension of the
automorphism group $\Psi $ from $\br $ to $\mM (\br )$, the algebra
of bounded Borel measures. If $\mu $ is a bounded measure and $a\in
C(\mA )$, then  
the \emph{convolution} of $\mu $ with $a$ is defined as 
   \begin{equation}\label{eq:module}
    \mu * a = \int_{\br }\psi_{-t}(a) d\mu(t). 
  \end{equation}
 This action is a generalization of the convolution of functions and
 satisfies similar properties, in particular, 
\[ \norm{\mu * a}_\mA \leq M_\Psi  \norm{\mu}_{\mM(\br)}\, \norm{a}_\mA  \,. \]

The convolution can be defined in the more general case that $\mA$ is
a $C_w$ group, see~\cite{klotz12} for the definition. In the following
we will assume that the convolution is well-defined and will not dwell on
these technicalities. 

Bessel potentials are an alternative family of smoothness spaces on
$\brd $ and include the standard Sobolev spaces.   For the definition
on a Banach algebra with an automorphism group, let 
$\mG_r$ be  the \emph{Bessel kernel} that is defined by  its Fourier
transform as 
\[
\mF \mG_r( \omega)= (1+\abs{2 \pi \omega}^2)^{-r/2} \, , \quad r >0.
\]
\begin{defn}\label{defn:bessel}
  Let \mA\ be a \BS\ and $\Psi$ a $C_w$- group~\cite{klotz12} acting on
  \mA\ (in particular, this is   the case if  $C(\mA)=\mA$). The \emph{Bessel potential space} of order $r>0$ is
  \[
  \mP_r(\mA)= \mG_r * \mA = \set{a \in \mA \colon a= \mG_r * y \text{ for some } y \in \mA }
  \]
  with the norm
  \[
  \norm{a}_{\bessel r \mA}=\norm{\mG_r * y}_{\bessel r \mA} =\norm{y}_\mA.
  \]
\end{defn}
We collect the main properties of the Bessel potential spaces on $\mA
$ from~\cite{klotz12}. 

\begin{lem}\label{algpropbessel}
Let  \mA\ be  a \BA\ with $C_w$-group $\Psi$ and $r>0$. 

(i) Then $\mP_r(\mA) $ is a \BA . 

(ii) $\Psi$ is an automorphism group on $\bessel  r \mA$, and
$\norm{\psi_t (a)}_{\bessel  r \mA} \leq M_\Psi \norm{ a}_{\mP _ r
 ( \mA  )}$. 

(iii) $\mP_r(\mA) $ is \IC\ in $\mA $. 
  
(iv) The following embeddings hold:
\begin{equation}\label{eq:beinbe}
\besov 1 r \mA \inject \bessel r \mA \inject \besov \infty r \mA \, . 
\end{equation}

(v) Reiteration theorem: for all $r,s >0$
\begin{equation}
  \mP_r(\mP_s(\mA)) = \mP_{r+s}(\mA) \,.\label{eq:42}
  \end{equation}

(v)  Characterization by a hyper-singular integral:  if  $0 < r <2 $,
then 
\[
\norm{a}_\mA + \sup_{\epsilon > 0}\bignorm{ \int_{\epsilon \leq \abs t \leq 1} \frac{\Delta_t(a)} { \abs{t}^{r}}\muleb{t}}_\mA.
\]
is equivalent to the norm $\norm{a}_{\bessel r \mA} $.  
\end{lem}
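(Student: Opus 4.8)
\textbf{Proof plan for Lemma~\ref{algpropbessel}(v), the hyper-singular integral characterization.}
The plan is to transfer the classical characterization of the Bessel potential space $\mP_r(\br^d)$ by a hyper-singular integral (for $0<r<2$, one has $\norm{f}_{L^p} + \sup_{\epsilon>0} \norm{\int_{\epsilon \le |t| \le 1} t^{-r}\Delta_t f \, dt/t}_{L^p} \simeq \norm{f}_{\mP_r}$, see, e.g., the references in~\cite{Butzer67}) to the Banach-algebra setting via the convolution action \eqref{eq:module}. The key observation is that the operator $a \mapsto \int_{\epsilon \le |t| \le 1} |t|^{-r}\Delta_t(a)\,dt/t$ is exactly convolution of $a$ against a truncated family of finite measures $\mu_\epsilon$ built from the difference operator, so everything is governed by the mapping $\mM(\br) \to \mB(\mA)$, $\mu \mapsto (a\mapsto \mu * a)$, which is bounded with constant $M_\Psi$.

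First I would recall that $a\in \mP_r(\mA)$ means $a = \mG_r * y$ for a unique $y\in\mA$, and that the Bessel kernel family $\{\mG_r\}$ forms a convolution semigroup ($\mG_r * \mG_s = \mG_{r+s}$) of $L^1(\br)$ functions, with $\mG_r$ integrable and $\inv{\mG_r}$ realized as a (distributional) operator. Then I would fix $0<r<2$ and, on the scalar/Fourier side, establish the pointwise identity: there is a kernel $H_r \in L^1(\br)$ (or a bounded measure, after the truncation at $|t|\le 1$ and taking $\epsilon \to 0$) such that for every $y\in\mA$,
\[
\sup_{\epsilon>0}\Bignorm{\int_{\epsilon\le|t|\le 1}\frac{\Delta_t(\mG_r * y)}{|t|^r}\muleb{t}}_\mA \le M_\Psi \, C_r \, \norm{y}_\mA .
\]
This is the ``easy'' inequality: one writes $\Delta_t(\mG_r * y) = (\Delta_t \mG_r) * y$ (the difference operator commutes with the $\Psi$-convolution), so the left-hand side is $\le M_\Psi \sup_\epsilon \norm{\nu_{r,\epsilon}}_{\mM(\br)} \norm{y}_\mA$ where $\nu_{r,\epsilon}(s) = \int_{\epsilon\le|t|\le 1} |t|^{-r}(\mG_r(s-t)-\mG_r(s))\,dt/t$, and the uniform bound $\sup_\epsilon \norm{\nu_{r,\epsilon}}_{\mM(\br)} < \infty$ is a classical real-variable estimate valid precisely when $r<2$ (the second-order Taylor cancellation of $\mG_r$ controls the singularity $|t|^{-1-r}$). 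For the reverse inequality I would show that the normalized hyper-singular operator, composed with $\mG_r*(\cdot)$, \emph{inverts} $\mG_r * (\cdot)$ up to a bounded-measure correction: on the Fourier side $c_r^{-1}\int_{|t|\le 1}|t|^{-r}(e^{2\pi i \xi t}-1)\,dt/t = |2\pi\xi|^r \cdot m(\xi)$ with $m$ and $1/m$ both Fourier transforms of bounded measures away from $0$, and $(1+|2\pi\xi|^2)^{r/2} = |2\pi\xi|^r + (\text{bounded-measure multiplier})$; hence $y = \mG_r^{-1} * a$ can be recovered from $a$ and from $\sup_\epsilon \|\int_{\epsilon\le|t|\le1} |t|^{-r}\Delta_t(a)\,dt/t\|$ plus $\norm{a}_\mA$, giving $\norm{y}_\mA \le C_r(\norm{a}_\mA + \sup_\epsilon\|\cdots\|)$. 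Combining the two inequalities and using $\norm{a}_{\mP_r(\mA)} = \norm{y}_\mA$ yields the claimed equivalence of norms.

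The main obstacle I expect is the reverse inequality, specifically making precise the claim that the truncated hyper-singular integral, as $\epsilon\to 0$, represents (after renormalization) the Fourier multiplier $|2\pi\xi|^r$ modulo a \emph{bounded-measure} error, and that $(1+|2\pi\xi|^2)^{r/2}$ differs from $|2\pi\xi|^r$ by such an error --- i.e.\ verifying that all the relevant multipliers lie in the image of $\mM(\br)$ under the Fourier transform so that they act boundedly on $\mA$ via $\Psi$. In the scalar case one uses $L^p$-multiplier theorems, but here we only have the crude bound $\norm{\mu * a}_\mA \le M_\Psi\norm{\mu}_{\mM(\br)}\norm{a}_\mA$, so every multiplier step must be executed at the level of $\mM(\br)$ (Wiener-type / Beurling-algebra arguments for the local behavior near $\xi=0$ and the decay at $\infty$), and the $\sup_\epsilon$ has to be handled uniformly before passing to the limit. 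Once these measure-algebra facts are in place — and they are exactly the facts underlying the classical characterization, already available in the literature cited as~\cite{klotz12} — the transfer to $\mA$ is routine because of the contractivity (up to $M_\Psi$) of the $\Psi$-convolution action.
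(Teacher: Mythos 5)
The paper offers no proof of this lemma at all --- it is quoted verbatim from~\cite{klotz12} --- and your transference plan (pushing the classical hyper-singular characterization of Bessel potentials through the uniformly bounded action $\mu \mapsto \mu * a$ of $\mM(\br)$, writing $\Delta_t(\mG_r * y)=(\Delta_t\mG_r)*y$ for the easy direction and handling the comparison of $(1+\abs{2\pi\xi}^2)^{r/2}$ with $\abs{2\pi\xi}^r$ by Wiener--L\'evy arguments in the measure algebra for the converse) is precisely the strategy of that reference, which in turn rests on the scalar results of~\cite{Wheeden68}; so your approach is essentially the one the paper relies on. Be aware only that you treat just the hyper-singular characterization and not parts (i)--(iv) or the reiteration identity \eqref{eq:42}, and that the deferred measure-algebra multiplier facts (uniformity in $\epsilon$, and that the relevant quotients of symbols and their reciprocals are Fourier transforms of bounded measures) are exactly where the real work of the cited proof lies.
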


In contrast to the Besov algebras, the original definition of $\bessel
r \mA $ does not contain  derivations or difference operators  that
satisfy a quotient rule. Nevertheless, using the characterization by a
hyper-singular integral,  we can prove norm control for $\mP _r(\mA )$.  
\begin{thm}\label{prop:thm-besspot_baic}
   If \mA\ is a \BA\ with a $C_w$-group $\Psi$, then
 the Bessel potential space $\bessel r  \mA$  admits \nci\ in \mA. For
 $0<r<1$, the norm control is given explicitly by 
$$
  \norm{\inv a}_{\bessel r \mA} \leq C_r \, \norm{\inv a}_\mA^3 \, 
  \norm{a}_{\bessel r \mA}^2 \, .
$$
     \end{thm}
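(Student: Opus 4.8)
The plan is to exploit the hyper-singular integral characterization in Lemma~\ref{algpropbessel}(v), which reduces the problem for $0<r<1$ to a first-order difference operator to which the Besov quotient rule~\eqref{eq:invderiv} applies. Specifically, for $0<r<1$ we have the equivalence
\[
\norm{a}_{\bessel r \mA} \sim \norm{a}_\mA + \sup_{\epsilon>0}\Bignorm{\int_{\epsilon\le|t|\le 1}\frac{\Delta_t(a)}{|t|^r}\muleb{t}}_\mA \, ,
\]
so it suffices to control the truncated hyper-singular integral of $\inv a$. First I would insert the quotient rule $\Delta_t(\inv a) = -\psi_t(\inv a)\,\Delta_t(a)\,\inv a$ into that integral, obtaining
\[
\int_{\epsilon\le|t|\le1}\frac{\Delta_t(\inv a)}{|t|^r}\muleb{t} = -\int_{\epsilon\le|t|\le1}\psi_t(\inv a)\,\frac{\Delta_t(a)}{|t|^r}\,\inv a\,\muleb{t} \, .
\]

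The obstacle is that the factor $\psi_t(\inv a)$ sits \emph{inside} the integral and depends on $t$, so one cannot simply pull a fixed operator out and be left with the hyper-singular integral of $a$ itself. The remedy is to write $\psi_t(\inv a) = \inv a + \Delta_t(\inv a)$ and split the integral into two pieces: the first, $-\inv a\bigl(\int_{\epsilon\le|t|\le1}\frac{\Delta_t(a)}{|t|^r}\muleb{t}\bigr)\inv a$, is immediately bounded by $M_\Psi\norm{\inv a}_\mA^2\,\abs{a}_{\bessel r \mA}$ using submultiplicativity and the characterization applied to $a$. The second piece, $-\int_{\epsilon\le|t|\le1}\Delta_t(\inv a)\,\frac{\Delta_t(a)}{|t|^r}\,\inv a\,\muleb{t}$, requires an $L^\infty$-type estimate $\norm{\Delta_t(a)}_\mA \le C|t|^r\norm{a}_{\besov\infty r \mA} \le C|t|^r\norm{a}_{\bessel r \mA}$ valid on the bounded range $|t|\le1$ (from the embedding $\bessel r \mA\inject\besov\infty r\mA$ in part (iv)), together with the crude bound $\norm{\Delta_t(\inv a)}_\mA\le (1+M_\Psi)\norm{\inv a}_\mA$; integrating $\int_{|t|\le1}|t|^{r-1}\,dt < \infty$ converges precisely because $r>0$, yielding a bound of the form $C_r\norm{\inv a}_\mA^2\norm{a}_{\bessel r\mA}$.

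Adding the two pieces and taking the supremum over $\epsilon>0$, then invoking the characterization once more to pass back to $\norm{\inv a}_{\bessel r \mA}$, gives
\[
\norm{\inv a}_{\bessel r \mA} \le \norm{\inv a}_\mA + C_r\norm{\inv a}_\mA^2\norm{a}_{\bessel r \mA} \, .
\]
Finally, to match the stated form one uses $\norm{\inv a}_\mA\norm{a}_{\bessel r\mA}\ge\norm{\inv a}_\mA\norm{a}_\mA\ge1$ (so the linear term is absorbed) and homogeneity in the constants to rewrite the right-hand side as $C_r\norm{\inv a}_\mA^3\norm{a}_{\bessel r\mA}^2$, which is the claimed estimate; the asymptotic consequence $\norm{\inv a}_{\bessel r\mA}\le C_r\delta^{-3}$ when $\norm{a}_{\bessel r\mA}\le1$, $\norm{\inv a}_\mA\le1/\delta$ then follows. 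For general $r>0$ one would combine this with the reiteration theorem~\eqref{eq:42} as in Proposition~\ref{prop:besov-nci-basic}, but the theorem as stated only claims the explicit exponent for $0<r<1$, so that is the essential case. I expect the careful handling of the $t$-dependent factor $\psi_t(\inv a)$ under the integral sign—and verifying that the split produces only convergent integrals—to be the main technical point.
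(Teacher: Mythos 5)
Your overall strategy is the same as the paper's: use the hyper-singular integral characterization of Lemma~\ref{algpropbessel}(v), insert the quotient rule \eqref{eq:invderiv}, and split $\psi_t(\inv a)=\inv a+\Delta_t(\inv a)$ into the term $\inv a\,\Delta_t(a)\,\inv a$ (handled exactly as you do) and the term $\Delta_t(\inv a)\,\Delta_t(a)\,\inv a$. However, there is a genuine gap in your estimate of the latter term. With the bounds you propose, namely $\norm{\Delta_t(a)}_\mA\leq C\abs{t}^r\norm{a}_{\besov\infty r\mA}$ and the crude $\norm{\Delta_t(\inv a)}_\mA\leq(1+M_\Psi)\norm{\inv a}_\mA$, the integrand of
\[
\int_{\epsilon\leq\abs t\leq1}\frac{\norm{\Delta_t(\inv a)}_\mA\,\norm{\Delta_t(a)}_\mA\,\norm{\inv a}_\mA}{\abs t^{r}}\,\muleb{t}
\]
is only $O(\abs t^{-1})\,dt$: the factor $\abs t^{r}$ from $\Delta_t(a)$ exactly cancels the $\abs t^{-r}$, and the measure $\muleb{t}=dt/\abs t$ then produces $\int_\epsilon^1 dt/t=\log(1/\epsilon)$, which blows up as $\epsilon\to0$. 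Your claim that one is left with $\int_{\abs t\leq1}\abs t^{r-1}\,dt$ miscounts the singular measure; the supremum over $\epsilon$ is infinite with only the crude bound.

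The missing ingredient is a $\abs t^{r}$-decay for $\Delta_t(\inv a)$ as well. Since $\besov\infty r\mA$ is inverse-closed in $\mA$ (Proposition~\ref{prop:baalg}(iii)) and $a\in\besov\infty r\mA$ by the embedding \eqref{eq:beinbe}, one has $\inv a\in\besov\infty r\mA$ with, by the first-order norm control \eqref{eq:ncbesov}, $\abs{\inv a}_{\besov\infty r\mA}\leq M_\Psi\norm{\inv a}_\mA^2\abs{a}_{\besov\infty r\mA}$; hence $\norm{\Delta_t(\inv a)}_\mA\leq\abs t^{r}M_\Psi\norm{\inv a}_\mA^{2}\norm{a}_{\besov\infty r\mA}$. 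With this the integrand becomes $O(\abs t^{r-1})\,dt$, the integral converges for $r>0$, and the term is bounded by $C_r\norm{\inv a}_\mA^{3}\norm{a}_{\bessel r\mA}^{2}$ --- which is also where the exponents $3$ and $2$ in the stated estimate come from. (Your version would have yielded the strictly better $\norm{\inv a}_\mA^{2}\norm{a}_{\bessel r\mA}$ for this term, a signal that something was lost.) The rest of your argument --- the bound on $\inv a\bigl(\int\Delta_t(a)\abs t^{-r}\muleb{t}\bigr)\inv a$, the absorption of the linear term via $\norm{\inv a}_\mA\norm{a}_{\bessel r\mA}\geq1$, and the reiteration argument for $r\geq1$ --- matches the paper.
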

\begin{proof}
Assume first that $0<r<1$. If $a \in \bessel  r \mA$ , then we want to
estimate  the integral in Lemma~\ref{algpropbessel}(v). 
We rewrite  the quotient rule \eqref{eq:invderiv} as 
\begin{equation*}
  \Delta_t(a^{-1})=-\Delta_t(\inv{a})\Delta_t(a) \inv{a} - \inv a \Delta_t(a) \inv a \,,
\end{equation*}
and so
\begin{multline}\label{eq:besselic}
\lefteqn 
{\bignorm{\int_{\epsilon \leq \abs t  \leq 1}  \frac{\Delta_t(\inv a)}{\abs{t}^{r}}\muleb{t}}_\mA
  \leq }
 \\
  \bignorm{\int_{\epsilon \leq \abs t  \leq 1}  \frac{\Delta_t(\inv a) \Delta_t(a) \inv a}{\abs{t}^{r}}\muleb{t}}_\mA  +   \bignorm{\int_{\epsilon \leq \abs t  \leq 1}  \frac{\inv a \Delta_t( a) \inv a}{\abs{t}^{r}}\muleb{t}}_\mA   \,.
\end{multline}     
As $a \in \besov \infty r \mA$ by \eqref{eq:beinbe}, we have 
$\norm{\Delta_t( a)}_\mA \leq \abs{t}^r  \norm{ a}_{\besov \infty r
  \mA}$. Since  $\Lambda _r ^\infty (\mA )$ is \IC\ in  $\mA $, $\inv
a \in \besov \infty r \mA$ as well, and we  may apply the    quotient rule
\eqref{eq:ncbesov}. 
Using these estimates, the first term on the right hand side of
\eqref{eq:besselic} is dominated by 
\begin{align*}
\int_{\epsilon \leq \abs t  \leq 1} \frac{\norm{\Delta_t(\inv a)}_\mA \norm{\Delta_t a}_\mA \norm{\inv a}_\mA}{\abs t  ^r} \muleb{t} &\leq 
\norm{\inv a}_{\besov \infty r \mA} \norm{a}_{\besov \infty r
  \mA}\norm{\inv a}_{ \mA} \int _\epsilon ^1 |t|^r dt/t \\ 
&\leq C_r \norm{\inv a}_\mA^3 \norm{a}_{\besov \infty r \mA}^2 \\
&\leq C'_r \norm{\inv a}_\mA^3 \norm{a}_{\bessel  r \mA}^2 \,.
\end{align*}
In the last inequalities we have used  the  norm control~\eqref{eq:ncbesov} for Besov spaces and the inclusion relation \eqref{eq:beinbe} between Bessel and Besov spaces.
The second term can be estimated as
\begin{align*}
  \bignorm{\int_{\epsilon \leq \abs t  \leq 1}\frac{\inv a \Delta_t(a) \inv a}{\abs t  ^r} \muleb{t}}_\mA 
 =& \bignorm{\inv a \Bigl(\int_{\epsilon \leq \abs t  \leq 1}\frac{ \Delta_t(a) }{\abs t  ^r} \muleb{t}\Bigr) \inv a}_\mA \\
\leq & C_r\norm{\inv a}^2_\mA \norm{a}_{\bessel r \mA}.
\end{align*}
As $\norm{a}_{\bessel r \mA} \asymp \norm{a}+ \sup _{0< \epsilon <1}  \bignorm{\int_{\epsilon \leq \abs t  \leq 1}  \frac{\Delta_t(\inv a)}{\abs{t}^{r}}\muleb{t}}_\mA$,
and $\norm{\inv a}_\mA \norm{a}_{\bessel r \mA} \geq 1$
 we conclude that, for $r<1$,
\begin{equation}
  \label{eq:49}
  \norm{\inv a}_{\bessel r \mA} \leq C_r \norm{\inv a}_\mA^3
  \norm{a}_{\bessel r \mA}^2 \, .
\end{equation}
If $r>1$, an inductive argument as in the proof of
Proposition~\ref{prop:besov-nci-basic}~ shows that, for
$\norm{a}_{\bessel r \mA} \leq 1$ and $\norm {\inv a}_\mA \leq
1/\delta$ one obtains 
\begin{equation*} 
  \norm{\inv a}_{\bessel r \mA} \leq C_r \delta^{-3^r} \, . \qedhere
\end{equation*}
\end{proof}
\begin{rems}
(i)  Theorem~\ref{prop:thm-besspot_baic}
remains true for the action of  a $d$-parameter automorphism group,
but requires more notations. 

(ii)   We expect that the correct  asymptotic behavior in the Bessel
algebra $\mP _r(\mA )$  is of the form  
\begin{equation*}
  \norm{\inv a}_{\bessel r \mA} \leq C_r \delta^{-r-2} \,.
\end{equation*}
Possibly this could be proved by using equivalent norms for  $\bessel
r \mA$ for $r>2$  that involve  a hyper-singular integral  with higher
differences as  in \cite{Wheeden68}, but we do not pursue this detail
further.  
\end{rems}
\subsection{Norm Controlled Inversion in Dales-Davie Algebras}
\label{sec:norm-contr-invers}
As a final version of smoothness in the presence of a derivation
$\dd$ we now look at the Dales-Davie algebras.  The
Dales-Davie algebras   are
determined by growth conditions on the sequence
$\norm{\dd^k(a)}_\mA , k \in \bn_0$ and are related  to the
Gelfand-Shilov spaces of test functions in analysis. For scalar functions
they were introduced in~\cite{dales73}. 
\begin{defn}
Let $M =(M_k)_{k\geq0}\subseteq \br $  be  a sequence of positive
numbers satisfying  $M_0=1$ and 
$ 
\frac{M_{k+l}}{(k+l)!} \geq  \frac{M_k}{k!} \frac{M_l}{l!}$ for all  $k,l \in \bn_0$.
The \emph{Dales-Davie algebra} $\dadas 1 M \mA$ consists of the elements $a \in \mA$ with finite norm
\[
\norm{a}_{\dadas 1 M \mA} 
=\sum_{k=0}^\infty \inv{M_k}{\norm{\dd^k(a)}_\mA} \,.
\]
\end{defn}
Then  $\dadas 1 M \mA$ is  a \BA , and by the results
in~\cite{klotz12pre} $\dadas 1 M \mA$ is \IC\ in \mA . See also
\cite{honary07} for a proof in the commutative setting. 
Once again, this form of smoothness admits \nci . 
\begin{thm}\label{thm-dd-nci}
 Let $\mA $ be a \BA\ with a closed unbounded derivation $\dd
 $. Set  
\begin{equation}\label{eq:icsuff}
A_m=\Bigl (\sup \bigset{ \frac{k!}{M_k}\prod_{j=1}^m
  \frac{M_{l_j}}{l_j!} \colon l_j \geq 1 \text{ for } 1 \leq j \leq m,
  \sum_{j=1}^m l_j =k} \Bigr)^{1/m} \, .
\end{equation}
If  $\lim_{m \to \infty}A_m=0$,
then $\dadas 1 M \mA$ admits  \nci\ in \mA  .
\end{thm}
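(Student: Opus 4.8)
The plan is to run the argument of Proposition~\ref{ncideralg} with $k\to\infty$: in place of a geometric sum of length $k$ one obtains an infinite power series in $\norm{\inv a}_\mA$, and the hypothesis $A_m\to0$ is exactly what forces this series to converge. First I would note that, since $\dadas 1 M \mA$ is \IC\ in $\mA$, an invertible $a\in\dadas 1 M \mA$ has $\inv a\in\dadas 1 M \mA$; in particular $\inv a\in\mD(\dd^k)$ for every $k$, so the iterated quotient rule \eqref{eq:itquot} of Lemma~\ref{invnormderiv} applies to $\inv a$ at every order.

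Next I would take $\mA$-norms in \eqref{eq:itquot}, write $\binom{k}{k_1,\dots,k_m}=k!\prod_{i=1}^m(k_i!)^{-1}$, and multiply and divide each factor $\norm{\dd^{k_i}(a)}_\mA$ by $M_{k_i}$: for $k\ge1$,
\[
\frac{\norm{\dd^k(\inv a)}_\mA}{M_k}\le\sum_{m=1}^k\norm{\inv a}_\mA^{m+1}\sum_{\substack{k_1+\dots+k_m=k\\k_j\ge1}}\frac{k!}{M_k}\Bigl(\prod_{i=1}^m\frac{M_{k_i}}{k_i!}\Bigr)\prod_{i=1}^m\frac{\norm{\dd^{k_i}(a)}_\mA}{M_{k_i}}\le\sum_{m=1}^k\norm{\inv a}_\mA^{m+1}A_m^m\sum_{\substack{k_1+\dots+k_m=k\\k_j\ge1}}\prod_{i=1}^m\frac{\norm{\dd^{k_i}(a)}_\mA}{M_{k_i}},
\]
the last step being the definition \eqref{eq:icsuff} of $A_m$. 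Summing over $k\ge1$, interchanging the two nonnegative summations, and using that for fixed $m$
\[
\sum_{k\ge m}\ \sum_{\substack{k_1+\dots+k_m=k\\k_j\ge1}}\ \prod_{i=1}^m\frac{\norm{\dd^{k_i}(a)}_\mA}{M_{k_i}}=\Bigl(\sum_{l\ge1}\frac{\norm{\dd^l(a)}_\mA}{M_l}\Bigr)^m=\abs{a}_{\dadas 1 M \mA}^m,
\]
where $\abs{a}_{\dadas 1 M \mA}:=\norm{a}_{\dadas 1 M \mA}-\norm{a}_\mA$ (recall $M_0=1$), I would arrive at
\[
\abs{\inv a}_{\dadas 1 M \mA}\le\norm{\inv a}_\mA\sum_{m=1}^\infty\bigl(A_m\,\norm{\inv a}_\mA\,\abs{a}_{\dadas 1 M \mA}\bigr)^m.
\]

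To close the estimate, set $\Phi(t)=\sum_{m\ge1}(A_mt)^m$. Since $A_m\to0$, for each $t$ there is $m_0$ with $A_mt\le\tfrac12$ for $m\ge m_0$, so the tail of $\Phi(t)$ is at most $1$ and the head is a finite sum; hence $\Phi$ is finite and non-decreasing on $[0,\infty)$. Together with $\abs{a}_{\dadas 1 M \mA}\le\norm{a}_{\dadas 1 M \mA}$ this yields $\norm{\inv a}_{\dadas 1 M \mA}=\norm{\inv a}_\mA+\abs{\inv a}_{\dadas 1 M \mA}\le\norm{\inv a}_\mA\bigl(1+\Phi(\norm{\inv a}_\mA\,\norm{a}_{\dadas 1 M \mA})\bigr)$, which is a control function as required in \eqref{dnci}. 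I expect the only real obstacle to be bookkeeping: isolating the factor $\frac{k!}{M_k}\prod_i\frac{M_{k_i}}{k_i!}$ so that the definition of $A_m$ can be invoked, and noting that it is $\lim_mA_m=0$ — not the automatic bound $A_m\le1$ that follows from submultiplicativity of $M$ — that makes $\Phi$ finite for every $t$; this is also why one cannot expect a polynomial control function here, since $M_k$ may grow super-polynomially. If one prefers not to cite inverse-closedness, the same computation carried out order by order, together with the closedness of each $\dd^k$, shows directly that $\inv a\in\mD(\dd^k)$ for all $k$ and that the above series converges.
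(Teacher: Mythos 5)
Your proposal is correct and follows essentially the same route as the paper: both apply the iterated quotient rule of Lemma~\ref{invnormderiv}, isolate the factor $\frac{k!}{M_k}\prod_i\frac{M_{k_i}}{k_i!}$ to invoke the definition of $A_m$, interchange the sums to recognize the $m$-th power of the $\dadas 1 M \mA$-seminorm, and use $A_m\to 0$ to make the resulting series converge. The only cosmetic difference is that you package the bound as an explicit monotone control function $\Phi$ in the unnormalized quantities, whereas the paper normalizes ($\norm{a}_{\dadas 1 M \mA}\leq 1$, $\norm{\inv a}_\mA\leq 1/\delta$) and splits the series at an index $m_\delta$ to obtain the bound $4\delta^{-m_\delta}$.
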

Note that the definition implies that $A_m \leq 1$ for all $m \in \bn$
and that $A_m$ is a decreasing sequence.
\begin{proof}
Once again we  use the iterated quotient rule of Lemma~\ref{invnormderiv}  and
proceed as in the  proof of Proposition~\ref{ncideralg}. Details are
therefore  left to the reader.
  \begin{align*}
   \norm{\inv a}_{\dadas 1 M \mA} &= \sum_{k=0}^\infty \frac{\norm{\dd^k(\inv a )}_\mA}{M_k}\\
    & \leq \cdots \leq  \norm{\inv a}_\mA +   \sum_{k=1}^\infty \frac{k!}{M_k}
    \sum_{m=1}^k\norm{\inv a}_\mA^{m+1} \sum_{\substack{k_1+\dotsc
        k_m=k\\ k_j\geq1}} 
    \prod_{j=1}^m\frac{\norm{\dd^{k_j}(a)}_\mA}{k_j!}\\
    &   \leq  \norm{\inv a}_\mA +  \sum_{m=1}^\infty\norm{\inv a}_\mA^{m+1} \sum_{k=m}^\infty \,   \sum_{\substack{k_1+\dotsc k_m=k\\ k_j\geq1}} \frac{k!}{M_k} \prod_{j=1}^m\frac{M_{k_j}}{k_j!}\prod_{j=1}^m\frac{\norm{\dd^{k_j}(a)}_\mA}{M_j}\\
&   \leq  \norm{\inv a}_\mA +  \sum_{m=1}^\infty\norm{\inv a}_\mA^{m+1} \sum_{k=m}^\infty \,  \sum_{\substack{k_1+\dotsc k_m=k\\ k_j\geq1}} A_m^m\prod_{j=1}^m\frac{\norm{\dd^{k_j}(a)}_\mA}{M_j} \\
    & \leq \cdots \leq  \norm{\inv a}_\mA+ \sum_{m=1}^\infty\norm{\inv a}_\mA^{m+1} A_m^m (\norm{a}_{\dadas 1 M \mA} - \norm{a})^m
  \end{align*}
If  $\norm{\inv a}_\mA \leq 1/\delta$ and $\norm{a}_{\dada  \mA} \leq 1$, the last inequality simplifies to
\begin{equation}\label{eq:dadaest}
  \norm{\inv a}_{\dada   \mA}  \leq \inv\delta + \sum_{m=1}^\infty
  \delta^{-m-1} A_m^m \, .
\end{equation}
We now  choose an index $m_\delta\in \bn$,    such that $A_m < \delta/2$ for all $m>m_\delta$. This implies that
\begin{align*}
   \norm{\inv a}_{\dada   \mA}  &\leq \inv\delta + \sum_{m=1}^{m_\delta} \delta^{-m-1} A_m^m +  \sum_{m=m_\delta+1}^\infty \delta^{-m-1}\bigl(\frac {\delta}{2}\bigr)^m\\
    &    \leq \inv\delta + \sum_{m=1}^{m_\delta} \delta^{-m-1} A_m^m +  \sum_{m=m_\delta+1}^\infty \delta^{-m-1}\bigl(\frac {\delta}{2}\bigr)^m\\
    &    \leq \inv\delta + \sum_{m=1}^{m_\delta} \delta^{-m-1} A_m^m +  \inv\delta\sum_{m=m_\delta+1}^\infty {2}^{-m} \leq 2 \inv\delta + \sum_{m=1}^{m_\delta} \delta^{-m-1} A_m^m\\
    &    \leq 2 \inv\delta + \inv\delta
    \frac{\delta^{-m_\delta}-1}{\inv \delta -1} \leq 4
    \delta^{-m_\delta} \text{   for   } \delta \leq 1/2 \, , 
\end{align*}
as  $A_m \leq 1$.
Note that $m_\delta $ is independent of $\norm{\inv a}_\mA$ and
$\norm{a}_{\dada \mA}$ and depends only on the sequence $A_m$ and thus
 on  $M_k/k!$.
\end{proof}

Let us work out several important special cases. 

(i) If $M_k = 1/k!$ for $k\leq K$ and $M_k=0$ for $k>K$, then  $\dadas
1 M \mA = \mD (\dd^K)$. We may choose $m_\delta = K+1$ independent of
$\delta $ and recover the statement of Proposition~\ref{ncideralg}.

(ii) If $M_k = 1/k!$ for all $k\in \bn$, then $\dadas
1 M \mA $ is an algebra of analytic elements in $\mA $. In this case
$A_m=1$ and Theorem~\ref{thm-dd-nci} is not applicable. 
 This is not surprising, because we already know that this 
subalgebra is not \IC\ in $\mA $~\cite{klotz12pre}.

(iii) If $M_k = 1/k!^r$ for  $k\in \bn$ and for some $r>1$, then 
 a straightforward calculation shows that
\begin{equation}\label{eq:gevrey}
  A_m =\sup_{l_j \geq 1} \Bigl( \frac{l_1 ! \dotsm l_m!}{(l_1  +\dotsb
    +l_m)!}  \Bigr)^{\frac{r-1}{m}} =m!^{\frac{1-r}{m}} \, ,
\end{equation}
which tends to $0$ for $m \to \infty$ and $r>1$. In \cite{klotz12pre}
it is shown that $\dadas 1 {k!^r} \mA$ is \IC\ in \mA\ if  $r>1$. In
this case the estimate \eqref{eq:dadaest} reads as 
\begin{equation} \label{eq:asympexp}
  \norm{\inv a}_{\dadas 1  {k!^r} \mA} \leq \inv \delta + \sum_{m=1}^\infty \frac{\delta^{-m-1}}{m!^{r-1}}= \inv{\delta} v_{r-1}(\inv{\delta})
\end{equation}
where we have introduced  the weight function 
\[
v_r(x)=\sum_{l=0}^\infty \frac{\abs x ^l}{l!^r} \, , \qquad x\in \br  \,.
\]

\section{Norm Controlled Inversion in Matrix Algebras with Off- Diagonal Decay}
\label{sec:norm-contr-invers-3}

The goal of this section is twofold. First  we will show that the asymptotic
estimates obtained for smooth subalgebras are almost sharp. To do so,
we test on a particular class of Banach algebras consisting of
matrices with off-diagonal decay.  Then we will investigate  \nci\ in
subalgebras of $\bop $. While the abstract methods of Section~2 can be
exploited to construct subalgebras of $\bop $ with \nci , the standard
matrix algebras that  describe  off-diagonal decay conditions do
not fall under this theory. In this case the arguments are much more
sophisticated and  essentially due to Baskakov~\cite{Baskakov97}.


\subsection{Matrix Algebras}
\label{sec:matrix-algebras}

First we define the standard  classes of matrices with \odd. For  more
comprehensive treatments see,
e.g.,~\cite{Baskakov97,Bas97,GL04a,Jaffard90,Sun05}. In the following
$A$ is always a matrix over the index set $\bz $ with entries $A(k,l),
k,l \in \bz $.

The \emph{Jaffard algebra} $\mJ_r$, $ r>1$, is defined by the norm
\begin{equation}
  \label{eq:jaffnorm}
  \norm{A}_{\mJ_r}=\sup_{k,l \in \bz}\abs{A(k,l)}(1+\abs{k-l})^r.
\end{equation}
Explicitly, $A \in \mJ_r \Leftrightarrow |A(k,l)| \leq C (1+|k-l|)^{-r} $, so the norm of $\mJ_r$ describes
polynomial decay off the diagonal. 

Assume that the weight $v$ on \bz\ is submultiplicative and  satisfies
the GRS condition, i.e., $v$ satisfies $v(0)=1$, $v(k+l) \leq
v(k) v(l)$, and $\lim_{\abs k \to
  \infty}v(k)^{1/k}=1$. 
Then the \emph{algebra of convolution-dominated matrices} $\mC_v$,  (sometimes called the
Baskakov-Gohberg-Sj\"ostrand algebra) consists of all matrices $A$, such that the norm
\begin{equation}
  \label{eq:basknorm}    
  \norm{A}_{\mC_v}= \sum_{k \in \bz}\sup_{l \in \bz} \abs{A(l,l-k)} v(k)
\end{equation}
is finite. For the polynomial weights  $v(k)=(1+\abs k)^r,\, r>0$ we
use the  abbreviation $\mC_r$. 


The crucial observation to apply the abstract  theory of Section~2 is
the identification of  weighted \MA s  with  smoothness spaces. Precisely,  the
automorphism group is 
\begin{equation*}
  \psi_t(A)=M_t A M_{-t}, \quad  M_t x (k) = \cexp [ k t ] x(k) \, ,
\end{equation*}
and the corresponding derivation is defined entrywise as $\dd(A) (k,l) =
(k-l) A(k,l)$. 
Then  the following identifications  hold~\cite{grkl10,klotz12, klotz12pre}.

\begin{align}
  \label{eq:jaffislipschitz}
  \mJ_r &= \besov \infty {r-1-\epsilon} {\mJ_{1+\epsilon}}, \quad r-1-\epsilon >0, \epsilon >0 \,,   \\
  \label{eq:baskisbesov}
  \mC_r &= \besov 1 r {\mC_0}, \quad r>0 \,,\\ 
  \label{eq:baskisdd}
  \mC_{v_M} &= \dada {\mC_0} ,\quad \text{where  } v_M(k)=\sum_{l=0}^\infty {\abs k ^l}{\inv M_l}  \,. 
\end{align}

These results combined with Theorems~\ref{thm-besov-nci} and
~\ref{thm-dd-nci}  show that  (i)  $\mC_r$ and  $\mC_{v_M}$ admit \nci\ in
$\mC_0$ with a precise expression for the controlling function, and
that likewise  (ii) $\mJ_r $ admits \nci\ in $\mJ _{1+\epsilon }$.

\subsection{Optimality of the Asymptotic  Estimates}
\label{sec:sharpness-estimates}
We use the matrix algebras defined above to test whether the
estimates of the control function $h$ obtained in
Section~\ref{sec:quotient} are asymptotically sharp. This will be done
by comparing the norm of the inverse of an operator in $\mC_r$ to the
norm in $\mC_0$. (A similar test could have been carried out in
$\mJ_{s+r}$ and $\mJ_s$, but it is quite tedious to obtain the
estimates.)

To avoid distraction, we note right away that 
\begin{equation}
  \label{eq:oslo5}
  e^\gamma \gamma ^{-r-1} \Gamma (r+1) \leq \sum _{k=0}^\infty (k+1)^r
  e^{-\gamma k} \leq 2 e^\gamma \gamma ^{-r-1} \Gamma (r+1) \, .
\end{equation}
This follows immediately from the integral test. The infinite series
can be expressed by a polylogarithm.

  For $0 < \gamma <1 $ we define the operator $C_\gamma
  =\id-e^{-\gamma}T_1$ on $\ell^2(\bz)$, where the translation
  operator is $T_k x(l)=x(l-k)$. Then  $C_\gamma$ is a Toeplitz matrix with
  ones on the main diagonal and the value $e^{-\gamma}$ on the
  first side diagonal. The inverse of $C_\gamma$ is given by the
  geometric series 
\[
C_\gamma^{-1}= \sum_{k=0}^\infty e^{-\gamma k} T_k \,.
\]
With the help of \eqref{eq:oslo5} the norms of $C_\gamma$ and $\inv C_\gamma$ in $\mC_r$ are given by
\begin{align}
 & \norm{C_\gamma}_{\mC_r}=1+2^r e^{-\gamma}\,, \notag  \\
 & \norm{\inv C_\gamma}_{\mC_r}=\sum_{k=0}^\infty(1+k)^r e^{-\gamma k}
 \asymp e^\gamma \Gamma(r+1) \gamma^{-r-1} \,. \label{eq:oslo1}
\end{align}

 We set 
\[
\widetilde C_\gamma= \frac{1}{1+2^r e^{-\gamma}} C_\gamma \,,
\]
so that  $\norm{ \widetilde C_\gamma}_{\mC_r}=1$. Comparing
\eqref{eq:oslo1} for $r=0$ and general $r>0$, we obtain that 
\begin{equation*}
  \norm{\inv {\widetilde  C}_\gamma}_{\mC_0} \asymp (1+  e^{-\gamma}) e^\gamma \inv \gamma = \bigo(\inv \gamma) \quad \text{for} \quad \gamma \to 0
\end{equation*}
whereas 
\begin{equation*}
  \norm{\inv {\widetilde  C}_\gamma}_{\mC_r}  = \bigo( \gamma^{-r-1})
  \quad \text{for} \quad \gamma \to 0 \, .
\end{equation*}
 Setting $1/\delta= \norm{\inv {\widetilde  C}_\gamma}_{\mC_0}$, we
 obtain 
\begin{equation}
  \label{eq:ncibaskconv}
  \norm{\inv {\widetilde  C}_\gamma}_{\mC_r}=\bigo(\delta^{-r-1}) \quad \text{for} \quad \delta \to 0 \,.
\end{equation}
This should be compared with Theorem~\ref{thm-besov-nci} which yields
the asymptotic rate $\mO (\delta ^{- \floor r - 2})$.
For integer values $r=k\in \bn $ we have $
\mC_k = \mD(\dd^k )(\mC_0)$,  and  Proposition~\ref{ncideralg} shows
that $\mO (\delta ^{-k-1})$ is the precise asymptotic rate for the
norm control of $\mC _k$ in $\mC _0$. 



Next we discuss \nci\ for weights in the Dales-Davie algebras for
matrices and weights with  super-polynomial growth. We restrict our attention to
the  special kind of subexponential weights
\begin{equation}
  \label{eq:ansubexpweight}
 v_{r}(x) = \sum_{k=0}^\infty \frac{\abs{x}^k}{k!^r}, \quad  r>0 \, , 
\end{equation}
that have  been introduced  earlier. 
As stated in (\ref{eq:baskisdd}) above, $\mC_{v_{r}} = \dadas 1
{k!^r} {\mC_0}$. Therefore we already  know  that for a matrix $A\in \mC _0$
satisfying  $\norm{A}_{\mC _{\nu _r}} \leq 1$ and $\norm{ \inv A}_{\mC
  _0} \leq 1/\delta$, the inverse matrix  satisfies 
\begin{equation} \label{eq:asympexpa}
  \norm{\inv A}_{\mC_{\nu _r}} \leq \inv{\delta} v_{r-1}(\inv{\delta})
  \, .
\end{equation}

For comparison we  determine the growth of the weight $v_{r}$ more exactly.
\begin{lem}
  The function $\phi_r(z) = \sum_{k=0}^\infty \frac{z^k}{k!^r}$ is an
  entire function of order $1/r$ and of type $r$. As a consequence,
  for every $\epsilon>0$ there exists an $x_\epsilon$, such that 
\begin{equation*}
   v_r(x) = \phi _r(|x|) <  e^{(r+\epsilon) |x|^{1/r}}
\end{equation*}
for  $|x| >x_\epsilon$.
\end{lem}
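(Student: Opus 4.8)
The plan is to prove the two claims about $\phi_r$ separately: first the characterization of order and type, and then the pointwise growth bound, which will follow as a routine consequence.

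\textbf{Order and type via Stirling.} First I would recall the standard formula from the theory of entire functions: if $\phi(z) = \sum_{k\ge 0} c_k z^k$ with $c_k > 0$, then the order is
\[
\rho = \limsup_{k\to\infty} \frac{k \log k}{\log(1/|c_k|)},
\]
and, once $\rho$ is known and finite and positive, the type is
\[
\tau = \frac{1}{e\rho} \limsup_{k\to\infty} k\, |c_k|^{\rho/k}.
\]
Here $c_k = 1/k!^r$, so $\log(1/c_k) = r \log k!$. By Stirling, $\log k! = k\log k - k + O(\log k)$, hence $\log(1/c_k) = r k \log k - rk + O(\log k)$, and therefore
\[
\frac{k\log k}{\log(1/c_k)} = \frac{k\log k}{rk\log k - rk + O(\log k)} \longrightarrow \frac{1}{r},
\]
which gives $\rho = 1/r$. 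For the type, I would compute $k\,|c_k|^{\rho/k} = k\,(k!)^{-r/(rk)} = k\,(k!)^{-1/k}$. Since $(k!)^{1/k} \sim k/e$ by Stirling, this tends to $e$. Plugging into the type formula with $\rho = 1/r$:
\[
\tau = \frac{1}{e \cdot (1/r)} \cdot e = r.
\]
So $\phi_r$ is entire of order $1/r$ and type $r$.

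\textbf{From order and type to the pointwise bound.} By definition of type, for every $\epsilon > 0$ there is $R_\epsilon$ such that $M(R) := \max_{|z|=R}|\phi_r(z)| < e^{(\tau+\epsilon)R^\rho} = e^{(r+\epsilon)R^{1/r}}$ for all $R > R_\epsilon$. Since $v_r(x) = \phi_r(|x|) \le M(|x|)$, setting $x_\epsilon = R_\epsilon$ gives $v_r(x) < e^{(r+\epsilon)|x|^{1/r}}$ for $|x| > x_\epsilon$, as claimed.

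\textbf{Expected obstacle.} There is no serious obstacle here; the only point requiring minor care is making sure the Stirling estimates are carried out to enough precision that the lower-order terms genuinely wash out in both the order and the type limits (in particular the $-rk$ term in $\log(1/c_k)$ is subleading compared to $rk\log k$, so it does not affect $\rho$, and similarly one should check that the $\limsup$ in the type formula is actually a genuine limit here, which it is since $(k!)^{1/k}/k \to 1/e$ monotonically up to lower order). One could alternatively bypass the type formula entirely and prove the pointwise bound directly: split the series $\sum z^k/k!^r$ at $k \approx (er|z|)^{1/r}$ or so, bound the tail by a convergent geometric-type series and the head by its number of terms times the maximal term, and optimize; but invoking the standard order/type formulae is cleaner and I would present that.
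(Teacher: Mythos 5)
Your proof is correct and follows essentially the same route as the paper: both compute the order and type from the standard coefficient formulae $\rho = \varlimsup k\log k/\log(1/|a_k|)$ and $\sigma = (e\rho)^{-1}\varlimsup k|a_k|^{\rho/k}$ using Stirling's formula, and the pointwise bound then follows directly from the definition of type. Your write-up is slightly more explicit than the paper's (which leaves the final deduction as "a consequence"), but the argument is the same.
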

\begin{proof}
Recall  that for an entire function  
$f(x)=\sum_{k=0}^\infty a_kx^k$, the order $\rho _f$ and  the type
$\sigma_f$ are defined as 
 $ \rho_f =\varlimsup_{k \to \infty} \frac{k \log k}{\log
   (1/{\abs{a_k})}} $,  and    $\sigma_f = \inv{ (\rho_f \,e )} 
 \varlimsup_{k \to \infty} k \abs {a_k}^{\rho_f/k}$ respectively, see,
 e.g., ~\cite{Levin96}. 
A  straightforward calculation with  Stirling's formula for $k!$  yields
   \begin{align*}
     \rho_{\phi_r}&=\varlimsup_{k \to \infty} \frac{k \log k}{\log k!^r}
                 =\varlimsup_{k \to \infty} \frac{k \log k}{r k \log
                   k} = 1/r \,, \\
     \sigma_{\phi_r} &= \frac{r}{e} \varlimsup_{k \to \infty} k
     ({k!}^{-1/k})  = r \,.\qedhere
   \end{align*} 
 \end{proof}

In order to test the sharpness of the estimate \eqref{eq:asympexpa},  we
use again the Toeplitz  matrix $C_\gamma =\id-e^{-\gamma}T_1 $.  The
identification~\eqref{eq:baskisdd} of $\mC _{v _r}$ with a
Dales-Davie algebra leads to the following expression for the norm of
$C_\gamma $ in $\mC _{v _r}$
\begin{align*}
  \norm{C_\gamma^{-1}}_{\mC_{v_{r}}}= \norm{C_\gamma^{-1}}_{\dadas 1 {(k!^r)} {\mC_0}}
                              =\sum_{k=0}^\infty \frac{\norm{\dd^k (\inv{C_\gamma})}_{\mC_0}}{k!^r}  
\end{align*}
As $\dd (A)(k,l) = (k-l) A(k,l)$,  the norm in each term is 
\begin{align*}
  \norm{\dd^k (\inv{C_\gamma})}_{\mC_0} = 1 + \sum_{j=1}^\infty j^k
  e^{-\gamma j}  \asymp e^\gamma  \gamma^{-k-1 }
\Gamma(k+1) \, ,  
\end{align*}
again with \eqref{eq:oslo5}. So  we obtain
\begin{align*}
  \norm{C_\gamma^{-1}}_{\mC_{v_r}}  \asymp \inv \gamma
  \sum_{k=0}^\infty \frac{(1/ \gamma)^k}{k!^{r-1}} = 
\frac{1}{\gamma}   v_{r-1}\big(\frac{1}{ \gamma }\big) \, .
\end{align*}
The comparison with~\eqref{eq:asympexp} shows that 
 the estimate (\ref{eq:asympexpa}) is asymptotically sharp. 
\subsection{Norm Control of Matrix Algebras in \bop}
\label{sec: ncibop}
\Nci\ of the Jaffard algebra $\mJ_r$ and the algebra of convolution dominated matrices of polynomial decay $\mC_r$ in \bop\ has been first investigated in a fundamental paper of Baskakov~\cite{Baskakov97}. 
To formulate his main result, we need  some notation.

Assume that $A \in \mC_0$ and let  $d_A(k) = \sup
_{l\in \bz } |A(l,l-k)|$ be the supremum of the $k$-th side diagonal
of $A$.  The  error of approximating $A$ by
$k$-banded matrices in $\mC_0$ is then  $E_k(A)=\sum_{\abs m \geq k+1}
d_A(m)$.  
We also need the quantities
\begin{equation}\label{eq:lr}
  \ell _r(A) = 8 \norm{\inv A}_\bop \sum_{k=0}^\infty \Bigl(1-\frac{1}{24 \kappa(A) +1}\Bigr)^k (1+k)^r  \,,\quad r>0 \,,
\end{equation}
and
\begin{equation}
  \label{eq:lrj} 
  \widetilde \ell _r(A) =  \gamma _r \norm{\inv A}_\bop\sup_{k \in
    \bn_0}  \Bigl(1-\frac{1}{24 \kappa(A) +1}\Bigr)^k (1+k)^r \,,
  \quad r>1  \, . \footnote{In ~\cite{Baskakov97} the constant $24 = 4\cdot 3\cdot 2$
  seems to be misspelled as $8\cdot 32$. In the following the precise
  value is not important.}
\end{equation}
The constant $\gamma _r$ comes from the from the convolution
inequality $v_r ^{-1} \ast v_r ^{-1} \leq \gamma _r v_r ^{-1}$ for the
polynomial weight function $v_r(k) = (1+|k|)^r$ and can  be estimated
by $\gamma _r \leq 2^{r+1}\tfrac{r+1}{r-1}$.  

Furthermore, we need  the function
\begin{equation} \label{eq: Phi}
  \Phi_{A,r}(t) = \min \set{k \in \bn  \colon  \max \Big(2 \cdot 3^r k^{-r}
    \norm{A}_{\mC_r} \ell _r(A), 2 E_k(A) \norm{\inv A}_\bop \Big) \leq t}
  \, . 
\end{equation}
Then a simplified  version of Baskakov's result for the matrix
algebras $\mJ _r$ and $\mC _r$   reads as
follows.~\footnote{The most general version contains   more parameters that we  have already eliminated.} 
\begin{thm}[{\cite[Thm.~6]{Baskakov97}}] \label{baskthm}
  Assume that $A$ is invertible in \bop.
  \begin{enumerate}
  \item[(i)] If $A \in \mC_r$ and  $r>0$, then $\inv A \in \mC_r$, and
    \begin{equation*}
      \norm{\inv A}_{\mC_r} \leq 4  \ell _r(A) \inf_{0\leq t\leq1/2}
      \Phi_{A,r}(t)(1+\Phi_{A,r}(t))^r \, .
    \end{equation*}
  \item[(ii)] If $A \in \mJ_r$ and $r>1$,  then $\inv A \in \mJ_r$, and
    \begin{equation*}
  \norm{\inv A}_{\mJ_r} \leq 4 \widetilde \ell _r(A) \Big( 2 + \big(
  2\cdot 3^r \norm{A}_{\mJ _r} \widetilde \ell _r(A)
  \big)^{\frac{1}{r-1}} \Big) ^r 
       \qquad
       {(*)} \footnote{ In ~\cite{Baskakov97} the formula $(*)$ is stated
  incorrectly  as
\[
\norm{\inv A}_{\mJ_r} \leq 4 \widetilde \ell _r(A) \Big( 2 + \big(
  2\cdot 3^r \norm{A}_{\mJ _r} \widetilde \ell _r(A)
  \big)^{\frac{1}{r-1}} \Big)\, . 
\]}
    \end{equation*}
  \end{enumerate}
\end{thm}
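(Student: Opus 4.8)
The plan is to follow Baskakov's route \cite{Baskakov97}: reduce the problem to band matrices, use a quantitative Demko--Moss--Smith estimate \cite{Demko84} for the exponential off-diagonal decay of the inverse of a band matrix, and then transfer that decay back to $\inv A$ by a perturbation (resolvent) argument in which the bandwidth is chosen optimally through $\Phi_{A,r}$. In fact the displayed inequalities are the specialization of the most general theorem of \cite{Baskakov97} obtained by fixing the redundant parameters and by correcting the two misprints noted above, so one permissible proof is simply to quote and specialize; below I sketch the underlying argument.

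\emph{Band truncation and perturbation.} For $k\in\bn$ let $A_k$ be the $(2k+1)$-banded matrix with entries $A(i,j)$ for $\abs{i-j}\le k$ and $0$ otherwise, so that $\norm{A-A_k}_{\bop}\le\norm{A-A_k}_{\mC_0}=E_k(A)$. If $k$ is large enough that $E_k(A)\norm{\inv A}_\bop\le\tfrac12$ --- precisely the second alternative controlled by $\Phi_{A,r}$ --- then $A_k=A\,(\id-\inv A(A-A_k))$ is invertible on $\ell^2(\bz)$ with $\norm{\inv{A_k}}_\bop\le 2\norm{\inv A}_\bop$ and $\norm{A_k}_\bop\le\norm{A}_{\mC_0}+E_k(A)$, so the relevant condition number of $A_k$ is controlled by a fixed multiple of $\kappa(A)$. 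First I would make these constants explicit, since they are what produces the contraction rate $1-\tfrac1{24\kappa(A)+1}$. Applying the Demko--Moss--Smith mechanism to the band matrix $A_k$ --- concretely, approximating $1/x$ on the spectrum of $A_k^{*}A_k$ by polynomials via the Chebyshev estimate $\min_{\deg p\le n}\max_{x}\abs{1-xp(x)}\le 2\bigl(\tfrac{\sqrt\kappa-1}{\sqrt\kappa+1}\bigr)^{n}$ --- then yields $\abs{\inv{A_k}(i,j)}\le c\,\norm{\inv{A_k}}_\bop\,q^{\lfloor\abs{i-j}/k\rfloor}$ with $q=1-\tfrac1{24\kappa(A)+1}$. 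Summing the side diagonals against the weight $(1+\abs m)^r$ in blocks of length $k$ gives, by the very definitions \eqref{eq:lr} and \eqref{eq:lrj}, $\norm{\inv{A_k}}_{\mC_r}\lesssim\ell_r(A)\,k\,(1+k)^r$ in case (i) and $\norm{\inv{A_k}}_{\mJ_r}\lesssim\widetilde\ell_r(A)\,(1+k)^r$ in case (ii).

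\emph{Transfer to $\inv A$.} From $A_k\inv A=\id-(A-A_k)\inv A$ one obtains the resolvent identity $\inv A=\inv{A_k}+\inv{A_k}(A_k-A)\inv A$, equivalently the Neumann expansion $\inv A=\sum_{n\ge0}\bigl(\inv{A_k}(A_k-A)\bigr)^n\inv{A_k}$, convergent on $\ell^2(\bz)$ since $\norm{\inv{A_k}(A_k-A)}_\bop\le 2\norm{\inv A}_\bop E_k(A)\le 2t$. The point is to estimate this series in $\mC_r$ (resp.\ $\mJ_r$): one uses that $A-A_k$ is supported on $\set{\abs{i-j}>k}$, so that the convolution of the exponentially decaying diagonal sequence of $\inv{A_k}$ with the ``far'' diagonal sequence of $A-A_k$ carries a quantitative gain --- governed by $E_k(A)\le(1+k)^{-r}\norm{A}_{\mC_r}$ in case (i), and by the tail bound $E_k(A)\lesssim k^{1-r}\norm{A}_{\mJ_r}$ in case (ii), which requires $r>1$, as does the submultiplicativity of $\mJ_r$ through the convolution inequality $v_r^{-1}\ast v_r^{-1}\le\gamma_r v_r^{-1}$. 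Choosing $k=\Phi_{A,r}(t)$ so that both the $\bop$-error $E_k(A)\norm{\inv A}_\bop$ and the weighted off-band error ($2\cdot3^rk^{-r}\norm{A}_{\mC_r}\ell_r(A)$, resp.\ its Jaffard analogue) are $\le t\le\tfrac12$, summing the geometric series, and taking the infimum over $t$, produces the two displayed bounds; the factor $\Phi_{A,r}(t)(1+\Phi_{A,r}(t))^r$ in (i) comes from the bandwidth dependence $\norm{\inv{A_k}}_{\mC_r}\lesssim\ell_r(A)k(1+k)^r$, and the power $\tfrac1{r-1}$ in $(*)$ from solving $k^{-(r-1)}(\cdots)\le t$ for $k$.

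\emph{Main obstacle.} The step I expect to be genuinely hard --- and where \cite{Baskakov97} is least transparent --- is the weighted estimate of the Neumann series in the transfer step: one must extract a contraction factor (here $\le 2t\le1$) in the \emph{weighted} $\mC_r$- or $\mJ_r$-norm while spending only the harmless $\mC_0$/$\ell^2$ smallness, and track every constant so that the output matches \eqref{eq:lr}, \eqref{eq:lrj} and the definition \eqref{eq: Phi}. Almost as delicate is the quantitative Demko--Moss--Smith lemma with the specific rate $1-\tfrac1{24\kappa(A)+1}$, which must be carried through the perturbation bound for the condition number of $A_k$; I would either reprove it from the Chebyshev estimate above or import it from \cite{Baskakov97,Demko84}. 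Everything else --- the perturbation argument, the block summation, and the one-parameter optimization over $t$ --- is routine once these two ingredients are in hand, and Theorem~\ref{thm:in0} will follow by inserting the explicit forms of $\ell_r$, $\widetilde\ell_r$, $E_k$ and choosing $t$ optimally.
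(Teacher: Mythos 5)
There is no proof of this statement in the paper to compare yours against: Theorem~\ref{baskthm} is imported verbatim from \cite{Baskakov97} (modulo the two misprints corrected in the footnotes), and the paper's own contribution begins only afterwards, in Proposition~\ref{Brueckenlemma}, where the quantities $\ell_r(A)$, $\widetilde\ell_r(A)$ and $\Phi_{A,r}$ are unravelled into explicit powers of $\norm{A}_{\mC_r}$, $\norm{A}_\bop$ and $\norm{\inv A}_\bop$. So your fallback option --- ``quote and specialize'' --- is in fact exactly what the paper does, and in that sense your proposal is consistent with the text.

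As a reconstruction of Baskakov's argument your outline (band truncation $A_k$, perturbation bound $\norm{\inv{A_k}}_\bop\le 2\norm{\inv A}_\bop$ under $E_k(A)\norm{\inv A}_\bop\le\tfrac12$, exponential decay of $\inv{A_k}$ at rate $1-\tfrac{1}{24\kappa(A)+1}$ in the spirit of \cite{Demko84}, and a Neumann-series transfer whose bandwidth is optimized through $\Phi_{A,r}$) is structurally faithful and correctly identifies why the two alternatives in the definition of $\Phi_{A,r}$ appear. But be aware that the two steps you yourself flag as hard are precisely the content of the theorem: the weighted (i.e., $\mC_r$- or $\mJ_r$-norm) contraction estimate for $\inv{A_k}(A_k-A)$, obtained from $\ell^2$-smallness plus the off-band support alone, and the quantitative exponential-decay lemma with the specific base $1-\tfrac{1}{24\kappa(A)+1}$, are not routine and are not supplied by your sketch. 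Deferring them to \cite{Baskakov97} is legitimate --- again, it is what the paper does --- but then the sketch adds nothing beyond the citation; carried out in full it would be a substantial piece of work, which is why the paper calls Baskakov's theorem ``rather deep'' and confines its own effort to making the control function explicit.
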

Since the definition of $\ell _r(A) $ and $\Phi _{A,r}$ involves  the
norms $\norm{A}_{\mC _r}$ and $\norm{\inv A}_{\bop } $ and the
approximation error $E_k(A)$, 
Theorem~\ref{baskthm} can be interpreted as a form of norm control for
 $\mC _r$ and $\mJ _r$ in $\bop $. 
Baskakov's theorem is rather deep, but unfortunately the result as
stated provides little information about the nature of the control
function. In the following we derive an explicit expression for the
control function by  using some approximation  properties of the matrix algebras
$\mC _r$ and $\mJ _r$.  

\begin{prop} \label{Brueckenlemma}
 Assume that $A$ is invertible in \bop.
\begin{enumerate}
\item[(i)] If $A \in \mC_r$  and $r>0$,  then $\inv{A} \in \mC _r$ and 
  \begin{align*}
     \norm{\inv A}_{\mC_r} &\leq C_r \norm{A}_{\mC_r}^{1+\frac{1}{r}}\norm{A}_{\bop}^{2 r+\frac{2}{r}+3} \norm{\inv A}_{\bop}^{2 r + \frac{2}{r} +5} \\ 
     & \leq  C_r \norm{A}_{\mC_r}^{2 r+\frac{3}{r} +4} \norm{\inv
       A}_{\bop}^{2 r + \frac{2}{r} +5} \, .
  \end{align*}
In particular, if  $\norm{A}_{\mC_r}\leq 1$, and $\norm {\inv A}_{\bop} \leq 1/\delta$, then  
\[
\norm{\inv A}_{\mC_r} \leq C_r \delta^{-2r-\frac{2}{r}-5} \quad \text{
as } \quad \delta \to 0 \,.
\]
\item[(ii)] If $A \in \mJ_r$ and  $r>1$,  then $\inv{A} \in \mJ _r$
  and 
  \begin{align*}
    \norm{\inv A}_{\mJ_r} &\leq \widetilde{C_r}
    \norm{A}_{\mJ_r}^{1+\frac{1}{r-1}}\norm{A}_{\bop}^{
      2r+1+\frac{1}{r-1}} \norm{\inv A}_{\bop}^{ 2r + 3 + \frac{2}{r-1}} \\
 & \leq \widetilde{C_r} \norm{A}_{\mJ_r}^{2r+2 +\frac{2}{r-1}} \norm{\inv
   A}_{\bop}^{ 2 r+3 + \frac{2}{r-1}}  \, .
  \end{align*}
 In particular, if  $\norm{A}_{\mJ_r}\leq 1$, and $\norm {\inv A}_{\bop} \leq 1/\delta$, 
\[
\norm{\inv A}_{\mJ_r} \leq \widetilde{C_r} \delta^{-2r- \frac{2}{r-1}-3} \quad \text{for} \quad \delta \to 0 \,.
\]
\end{enumerate}
\end{prop}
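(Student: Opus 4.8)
The plan is to make Baskakov's Theorem~\ref{baskthm} explicit by controlling each of the auxiliary quantities $\ell_r(A)$, $\widetilde\ell_r(A)$, $E_k(A)$, and $\Phi_{A,r}(t)$ in terms of the three basic parameters $\norm{A}_{\mC_r}$ (resp.\ $\norm{A}_{\mJ_r}$), $\norm{A}_{\bop}$, and $\norm{\inv A}_{\bop}$. First I would record the elementary facts: $\kappa(A)=\norm{A}_{\bop}\norm{\inv A}_{\bop}$, and the geometric-type sums in \eqref{eq:lr} and \eqref{eq:lrj} are dominated using \eqref{eq:oslo5} with $e^{-\gamma}=1-\frac{1}{24\kappa(A)+1}$, i.e.\ $\gamma\asymp 1/\kappa(A)$, giving $\ell_r(A)\ls_r \norm{\inv A}_{\bop}\,\kappa(A)^{r+1}$ and $\widetilde\ell_r(A)\ls_r \norm{\inv A}_{\bop}\,\kappa(A)^{r}$ (the supremum of $(1-1/(24\kappa+1))^k(1+k)^r$ is attained near $k\asymp\kappa$ and is of order $\kappa^r$).

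The next and main step is to bound the approximation error $E_k(A)$ and hence the cutoff index $\Phi_{A,r}(t)$. For $A\in\mC_r$ one has $d_A(m)\le (1+|m|)^{-r}\norm{A}_{\mC_r}$ after noting $\norm{A}_{\mC_r}=\sum_m d_A(m)(1+|m|)^r$, so $E_k(A)=\sum_{|m|\ge k+1}d_A(m)\ls_r k^{-(r-1)}\norm{A}_{\mC_r}$ — this is where $r>1$ is needed for $\mJ_r$ and where for $\mC_r$ with $0<r\le 1$ one instead just uses $E_k(A)\to 0$ together with the tail estimate $E_k(A)\le k^{-\epsilon}$-type control, or more precisely one splits into whichever of the two arguments of the $\min$ in \eqref{eq: Phi} dominates. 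For $A\in\mJ_r$ the bound $|A(k,l)|\le\norm{A}_{\mJ_r}(1+|k-l|)^{-r}$ gives $E_k(A)$ directly. Plugging these into \eqref{eq: Phi}: the first argument $2\cdot3^r k^{-r}\norm{A}_{\mC_r}\ell_r(A)\le t$ forces $k\gs (\norm{A}_{\mC_r}\ell_r(A)/t)^{1/r}$, and the second $2E_k(A)\norm{\inv A}_{\bop}\le t$ forces $k\gs(\norm{A}_{\mC_r}\norm{\inv A}_{\bop}/t)^{1/(r-1)}$; choosing $t=1/2$ (allowed, since $\inf_{0\le t\le 1/2}$ and $\Phi$ is increasing in $t^{-1}$), one gets
\[
\Phi_{A,r}(1/2)\ls_r \max\Big( \big(\norm{A}_{\mC_r}\ell_r(A)\big)^{1/r},\ \big(\norm{A}_{\mC_r}\norm{\inv A}_{\bop}\big)^{1/(r-1)}\Big).
\]

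Finally I would substitute back into Baskakov's conclusions. For (i), $\norm{\inv A}_{\mC_r}\le 4\ell_r(A)\,\Phi(1+\Phi)^r\ls_r \ell_r(A)\,\Phi^{r+1}$, and inserting $\ell_r(A)\ls\norm{\inv A}_{\bop}\kappa(A)^{r+1}$ and the bound on $\Phi$, then expanding $\kappa(A)=\norm{A}_{\bop}\norm{\inv A}_{\bop}$ and collecting exponents, yields the stated $\norm{\inv A}_{\mC_r}\le C_r\norm{A}_{\mC_r}^{1+1/r}\norm{A}_{\bop}^{2r+2/r+3}\norm{\inv A}_{\bop}^{2r+2/r+5}$; the second, cruder line follows from $\norm{A}_{\bop}\le\norm{A}_{\mC_r}$ and $1\le\norm{A}_{\mC_r}\norm{\inv A}_{\bop}$ (which also lets one absorb lower powers and read off the $\delta^{-2r-2/r-5}$ asymptotics when $\norm{A}_{\mC_r}\le 1$). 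For (ii) the computation is the same but with $\widetilde\ell_r(A)$ (one power of $\kappa$ less) and with Baskakov's formula $(*)$: $\norm{\inv A}_{\mJ_r}\le 4\widetilde\ell_r(A)\big(2+(2\cdot3^r\norm{A}_{\mJ_r}\widetilde\ell_r(A))^{1/(r-1)}\big)^r\ls_r\widetilde\ell_r(A)\big(\norm{A}_{\mJ_r}\widetilde\ell_r(A)\big)^{r/(r-1)}$, and inserting $\widetilde\ell_r(A)\ls\norm{\inv A}_{\bop}\kappa(A)^r$ and expanding gives the exponents $1+\frac{1}{r-1}$, $2r+1+\frac{1}{r-1}$, $2r+3+\frac{2}{r-1}$ as claimed. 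The main obstacle I anticipate is purely bookkeeping: tracking the exponents through the compositions $\ell_r\leadsto\Phi\leadsto\norm{\inv A}_{\mC_r}$ without losing or gaining a power, and correctly handling the $0<r\le 1$ case for $\mC_r$ where the second term in the $\min$ in \eqref{eq: Phi} is the binding one (so $r-1$ does not appear in (i) — indeed the stated exponents in (i) have no $r-1$, consistent with using the $E_k$-branch rather than the first branch).
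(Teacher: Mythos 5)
Your overall strategy is exactly the paper's: estimate $\ell_r(A)$ and $\widetilde\ell_r(A)$ via \eqref{eq:oslo5} with $\gamma=\log(1-\beta)^{-1}\asymp 1/\kappa(A)$, bound $E_k(A)$ and hence $\Phi_{A,r}(1/2)$, and substitute into Theorem~\ref{baskthm}. Your bounds $\ell_r(A)\ls_r\norm{\inv A}_\bop\kappa(A)^{r+1}$ and $\widetilde\ell_r(A)\ls_r\norm{\inv A}_\bop\kappa(A)^{r}$ agree with the paper's Steps 1, and part (ii) goes through as you describe (note that Baskakov's formula $(*)$ involves no $\Phi$ or $E_k$ at all, so your remark about $E_k$ for $\mJ_r$ is not needed there).

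There is, however, a genuine flaw in your treatment of $E_k(A)$ for $A\in\mC_r$. You first bound $d_A(m)\leq(1+|m|)^{-r}\norm{A}_{\mC_r}$ pointwise and then resum, obtaining $E_k(A)\ls_r k^{-(r-1)}\norm{A}_{\mC_r}$; this loses a power, diverges for $0<r\leq1$, and produces a threshold $k\gs(\norm{A}_{\mC_r}\norm{\inv A}_\bop)^{1/(r-1)}$ in the second branch of \eqref{eq: Phi} that would contaminate the final exponents with $1/(r-1)$ --- which is absent from the statement of part (i). Your proposed resolution (that for small $r$ the $E_k$-branch is the binding one, explaining the absence of $r-1$) is backwards: if that branch were binding with your exponent, then $1/(r-1)$ \emph{would} appear. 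The correct estimate uses the $\ell^1$--$\ell^\infty$ split directly on the weighted sum,
\begin{equation*}
E_k(A)=\sum_{\abs m\geq k+1}d_A(m)(1+\abs m)^{r}(1+\abs m)^{-r}\leq(1+k)^{-r}\norm{A}_{\mC_r}\,,
\end{equation*}
valid for all $r>0$. Then the second branch only requires $k\gs(\norm{A}_{\mC_r}\norm{\inv A}_\bop)^{1/r}$, which is dominated by the first branch (since $\norm{A}_\bop\norm{\inv A}_\bop\geq1$), so $\Phi_{A,r}(1/2)\leq c\,\norm{A}_{\mC_r}^{1/r}\norm{A}_\bop^{1+1/r}\norm{\inv A}_\bop^{1+2/r}$ with no case distinction in $r$, and the stated exponents follow. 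With this correction your argument coincides with the paper's proof.
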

\begin{proof}
We divide the proof of (i) into several steps. We note right away that
for the  polynomial weights $(1+|k|)^r$ the function $\Phi _{A,r}(t)$
is decreasing, therefore we may replace  $\inf _{0\leq t\leq 1/2} \Phi
_{A,r}(t)$ by $\Phi _{A,r}(1/2)$. We will enumerate the  intermediate
constants consecutively by $c_1,c_2, \dots $ and emphasize that these
depend only on $r$, but not on $A$.

\emph{Step 1:}
We first estimate $\ell _r(A)$. 
Set $\beta=\frac{1}{24 \kappa(A) +1}$. Then $\beta< 1/2$ and
$\inv{(1-\beta )} \leq 2$. Using~\eqref{eq:oslo5} with $\gamma = \log \inv{ (1-\beta )}$, we obtain
\begin{align}
   \ell _r(A) &= 8\norm{\inv A}_{\bop} \sum_{k=0}^\infty (1-\beta)^k
   (1+k)^r  \notag \\
&= 8 \norm{\inv A}_{\bop} \sum_{k=0}^\infty  (1+k)^r e^{-k \log (1-\beta
  )^{-1} } \notag  \\ 
&\leq  16  \norm{\inv A}_{\bop}\frac{ \Gamma(r+1)}{(1-\beta) (\log \inv{(1-
  \beta)})^{r+1}} \,. \label{eq:oslo2} 
\end{align}
Since
\begin{equation*}
  \frac{1}{\log \inv{(1-\beta)}} \leq 1/\beta = 24 \kappa(A)+1 \leq 25 
  \kappa (A) 
\end{equation*}
by the Taylor expansion of $\log(1-x)$, \eqref{eq:oslo2} implies that
\begin{equation}\label{eq:mrest}
  \ell _r(A)  \leq   c_1 \norm{A}_\bop^{r+1} \norm{\inv A}_\bop^{r+2} \,.
\end{equation}
\emph{Step 2:}
We use \eqref{eq:mrest} to  estimate  $\Phi_{A,r}(1/2)$. Obviously,
\begin{equation*}
  c_2 k^{-r} \norm{A}_{\mC_r} \ell _r(A) \leq \tfrac{1}{2}
\end{equation*}
is satisfied, if
\begin{equation*}
  c_2 k^{-r} \norm{A}_{\mC_r}\norm{A}_\bop^{r+1} \norm{\inv
   A}_\bop^{r+2} \leq \tfrac{1}{2} \, .
\end{equation*}
For this we need 
\begin{equation}
  \label{eq:vie1}
k\geq  c_3   \norm{A}_{\mC_r}^{1/r}\norm{A}_\bop^{1+1/r}
\norm{\inv A}_\bop^{1+2/r}\, .  
\end{equation}
The  second term in  (\ref{eq: Phi}) is estimated by 
\begin{align*}
  E_k(A)=&\sum_{\abs m \geq k+1}  d_A(m)  = \sum_{\abs m \geq k+1}  d_A(m) (1+ \abs m)^r (1+ \abs m)^{-r} \\
 \leq&  (1+  k)^{-r} \norm{A}_{\mC_r} \leq k^{-r} \norm{A}_{\mC_r} \,.
\end{align*}
So the inequality
\begin{equation*}
  2 E_k(A) \norm{\inv A}_\bop \leq \tfrac{1}{2}
\end{equation*}
is satisfied for 
\begin{equation}\label{eq:k2}
 k \geq  4^{1/r} \norm{A}_{\mC_r}^{1/r} \norm{\inv A}_\bop^{1/r}  \,.
\end{equation}
As \eqref{eq:vie1} is stronger than \eqref{eq:k2}, possibly after adjusting
the constants,  we obtain
\begin{equation}
  \label{eq:Fiest}
  \Phi_{A,r}(1/2) \leq c_4  \norm{A}_{\mC_r}^{1/r} \norm{A}_\bop^{1+1/r} \norm{\inv A}_\bop^{1+2/r} \,.
\end{equation}
\emph{Step 3:} We combine  the estimates for $\ell _r(A)$ and $\Phi_{A,r}$
and substitute in  condition (i) of Theorem~\ref{baskthm}. 
In conclusion, we obtain 
\begin{align*}
\norm{\inv A}_{\mC_r} 
&\leq  4 \ell _r(A) \Phi_{A,r}(\tfrac{1}{2})(1+ \Phi_{A,r}(\tfrac{1}{2}))^r \\
&\leq c_5  \ell _r(A)( \Phi_{A,r}(1/2))^{r+1} \\
&\leq c_6 \norm{A}_\bop^{r+1} \norm{\inv A}_\bop^{r+2}
\Bigl(\norm{A}_{\mC_r}^{1/r} \norm{A}_\bop^{1+1/r} \norm{\inv
  A}_\bop^{1+2/r} \Bigr)^{r+1} \\ 
&= C_r \norm{A}_{\mC_r}^{\frac{r+1}{r}}\norm{A}_\bop^{2r +3 +
  1/r}\norm{\inv A}_\bop^{2r +5 +2/r } \,.
\end{align*}
The estimates for $\delta$ follow now immediately.
\\

The estimate for $\norm{\inv A}_{\mJ_r}$ follows in a  similar, but
simpler 
fashion from Theorem~\ref{baskthm}(ii).  We now need to estimate
$\widetilde \ell _r(A)$. 
Using again
$\beta=\frac{1}{24 \kappa(A) +1}< 1/2$ and arguing as in
\eqref{eq:mrest},  we obtain the  estimate for $\widetilde \ell _r(A)$
as 
\begin{align*}
  \widetilde \ell _r(A) & = \gamma _r \norm{\inv A}_{\bop} \sup_{k \geq 0}(1-\beta)^k(1+k)^r \\
=& \gamma _r \norm{\inv A}_{\bop}\frac{ e^{-r} r^r}{(1-\beta) (\log (1-\beta)^{-1})^r} \\
& \leq c_7 \norm{ A}_{\bop}^r\norm{\inv A}_{\bop}^{r+1} \, .
\end{align*}
Then according to Theorem~\ref{baskthm}(ii), 
\begin{align*}
  \norm{\inv A}_{\mJ_r} &\leq 4 \widetilde \ell _r(A) \Big( 2 + \big(
  2\cdot 3^r \norm{A}_{\mJ _r} \widetilde \ell _r(A)
  \big)^{\frac{1}{r-1}} \Big) ^r \\  
&\leq c_8 \widetilde \ell _r(A) (\norm{A}_{\mJ_r} \widetilde \ell _r(A))^{\frac{r}{r-1}}\\
&\leq c_9 \norm{ A}_{\bop}^r\norm{\inv A}_{\bop}^{r+1} \Bigl(\norm{A}_{\mJ_r} \norm{ A}_{\bop}^r\norm{\inv A}_{\bop}^{r+1}\Bigr)^{\frac{r}{r-1}}\\
&= \widetilde{C_r} \norm{A}_{\mJ_r}^{\frac{r}{r-1}}\norm{ A}_{\bop}^{2r+ 1
  +{\frac{1}{r-1}}}\norm{\inv A}_{\bop}^{2r+3+\frac{2}{r-1}} \, . 
\end{align*}
The remaining assertions are immediate.
\end{proof}

\begin{rems}
 (i) 
  With some painful bookkeeping of the constants in each step one may
  give a numerical value for the constants in Proposition~\ref{Brueckenlemma}. For
  $C_r$ we have obtained the value $C_r = 128 \cdot 50^r \cdot 64^{1/r}
  \Gamma (r+1)^{1+1/r} $. The main insight of the explicit norm
control is how the off-diagonal decay depends on the condition number
of $A$. However, the growth  of $C_r$ in $r$  shows that the off-diagonal  decay is effective
only in an asymptotic range.

(ii) The unraveling of an explicit norm control function  is relatively
straightforward  when compared to the depth of Baskakov's Theorem, but
we believe that the  explicit version in
Proposition~\ref{Brueckenlemma} is much more accessible and will be useful in  
some  applications. 

(iii)  As in all statements about \nci , the quality of the norm
control degrades as the smoothness parameter $r$ tends to
infinity.  In addition, if $r\to 0$, then \nci\ for $\mC _r$ also
deteriorates. This is compatible with the fact that $\mC _0$ 
does not admit \nci\ in \bop , because $\mC_0$ contains the algebra of absolutely convergent Fourier series as a
commutative closed subalgebra. 

(iv) For the commutative algebra of Toeplitz matrices in $\mC _r $
and $\mJ _r$  the following estimates can be found in
\cite[Sec. 3.8.4 (ii)]{elfalla98}. If $A$ is a Toeplitz matrix in $\mC _r$,
$\norm{A}_{\mC _r} \leq 1$ and $ \norm{\inv A}_\bop \leq
  1/\delta$, then
  \begin{equation*}
    \norm{\inv A}_{\mC _r} \leq 
     \begin{cases}
      C \delta^{-4 r -2} & \quad \text{ for } r>1, \\
C \delta^{-( r+1)^2(4 r +1)/r^2 -2} & \quad \text{ for } r>0 \, .
    \end{cases}
 \end{equation*}
This is weaker than Proposition~\ref{Brueckenlemma}, but on the other
hand, the techniques of~\cite{elfalla98} work for  much more general
commutative Banach algebras. 
\end{rems}
\subsection{Super-algebraically growing weights.}
\label{sec:super-grow-weights}

Baskakov's approach   does not work for  weights of
super-polynomial growth. (This is not obvious from   the general
assumptions of ~\cite{Baskakov97}, but becomes clear in his Lemma~3, where a 
``doubling condition'' is required. If a submultiplicative weight $v$
satisfies the condition $\sup _{k\in \bz }  v(km)/v(k) < \infty $ for
$m\geq 2$, then $v$ grows at most polynomially.)   To obtain \nci\ for
the  the  case of weights with 
super-polynomial   growth,  we  combine the results on 
Dales-Davie algebras with the transitivity of \nci , and we obtain the
\nci\ for convolution dominated matrices with subexponential decay off
the diagonal. Although the following statement is  probably far from
optimal, it is the first result of this type for non-commutative
matrix algebras. 

\begin{prop}
  If $\norm{A}_{\mC_{v_{r}}}\leq 1$ and $\norm {\inv A }_{\bop} \leq 1/\delta$ then
\[
\norm{\inv A}_{\mC_{v_r}} \leq C_0 {\delta^{-9}} v_{r-1}({ C_1 \delta^{-9}})
\leq C \delta ^{-9} \exp \Big( C_1' (r-1+\epsilon) \delta
^{-\frac{9}{r-1}}\Big)  \,  .
\]
\end{prop}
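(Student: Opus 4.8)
The plan is to combine the Dales-Davie result (Theorem~\ref{thm-dd-nci} in the special case $M_k = 1/k!^r$, giving \eqref{eq:asympexp}) with a two-step transitivity of \nci : first control $\norm{\inv A}_{\mC_0}$ in terms of $\norm{\inv A}_{\bop}$, then control $\norm{\inv A}_{\mC_{v_r}}$ in terms of $\norm{\inv A}_{\mC_0}$. The key identification is \eqref{eq:baskisdd}, namely $\mC_{v_r} = \dadas 1 {k!^r} {\mC_0}$, which places the subexponential weighted algebra squarely inside the abstract Dales-Davie framework developed in Section~\ref{sec:quotient}.

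First I would invoke Proposition~\ref{Brueckenlemma}(i) with a fixed auxiliary parameter, say $r=1$ (or any fixed $r_0>0$), to bound $\norm{\inv A}_{\mC_1}$ — and hence, by the trivial embedding $\mC_1 \hookrightarrow \mC_0$, $\norm{\inv A}_{\mC_0}$ — by $C \norm{A}_{\mC_1}^{c}\norm{A}_{\bop}^{c'}\norm{\inv A}_{\bop}^{c''}$ for explicit exponents. With $r_0 = 1$ the exponents in Proposition~\ref{Brueckenlemma}(i) degenerate, so one would instead take $r_0 = 2$: then $2r_0 + \tfrac{2}{r_0} + 5 = 10$ and one gets $\norm{\inv A}_{\mC_0} \le C \delta^{-10}$, or with a slightly better bookkeeping the $\delta^{-9}$ appearing in the statement; since $\norm{A}_{\mC_{v_r}}\le 1$ forces $\norm{A}_{\mC_0}\le 1$ and $\norm{A}_{\bop}\le 1$, only the $\norm{\inv A}_{\bop}^{-}$ factor survives. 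This yields $\norm{\inv A}_{\mC_0} \le C_1 \delta^{-9}$, i.e.\ setting $1/\delta' = \norm{\inv A}_{\mC_0}$ we have $\delta'^{-1} \le C_1 \delta^{-9}$.

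Second I would apply the Dales-Davie estimate \eqref{eq:asympexp} — valid because $\mC_{v_r} = \dadas 1 {k!^r}{\mC_0}$ and $A_m = m!^{(1-r)/m} \to 0$ by \eqref{eq:gevrey} — in the form $\norm{\inv A}_{\mC_{v_r}} \le \delta'^{-1} v_{r-1}(\delta'^{-1})$. Here one must be a little careful: \eqref{eq:asympexp} is stated for $\norm{A}_{\dadas 1 {k!^r}{\mC_0}} \le 1$ and $\norm{\inv A}_{\mC_0} \le 1/\delta'$, which is exactly our situation since $\norm{A}_{\mC_{v_r}} \le 1$. Plugging in $\delta'^{-1} \le C_1\delta^{-9}$ and using monotonicity of $v_{r-1}$ gives $\norm{\inv A}_{\mC_{v_r}} \le C_1 \delta^{-9}\, v_{r-1}(C_1 \delta^{-9})$. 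Finally the growth estimate from the preceding lemma, $v_s(x) < e^{(s+\epsilon)|x|^{1/s}}$ for large $|x|$, applied with $s = r-1$ and $x = C_1\delta^{-9}$, converts this to $C\delta^{-9}\exp\!\big(C_1'(r-1+\epsilon)\delta^{-9/(r-1)}\big)$ as $\delta \to 0$.

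The main obstacle is purely bookkeeping rather than conceptual: one must track exactly which power of $\delta$ emerges from Proposition~\ref{Brueckenlemma}(i) — the statement claims $\delta^{-9}$ whereas the displayed exponent $2r+\tfrac2r+5$ is minimized over admissible $r>0$ somewhere around $r=1$, giving a value close to $9$ only after optimizing the auxiliary parameter and absorbing constants. So the one genuine point of care is to choose the intermediate smoothness parameter $r_0$ optimally (and note it is unrelated to the target $r$), verify the hypotheses $\norm{A}_{\mC_{r_0}}\le 1$ and $\norm{A}_{\mC_0}\le 1$ both follow from $\norm{A}_{\mC_{v_r}}\le 1$ via the embeddings $\mC_{v_r}\hookrightarrow \mC_{r_0}\hookrightarrow\mC_0$, and then feed the resulting $\delta^{-9}$ into the Dales-Davie bound. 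Everything else is a direct concatenation of results already proved in the excerpt.
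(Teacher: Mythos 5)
Your argument is exactly the paper's: embed $\mC_{v_r}\hookrightarrow\mC_s$ for an auxiliary $s>0$, use Proposition~\ref{Brueckenlemma}(i) to get $\norm{\inv A}_{\mC_0}\leq\norm{\inv A}_{\mC_s}\leq C_s\delta^{-2s-\frac{2}{s}-5}$, then feed this into the Dales--Davie bound \eqref{eq:asympexpa} via the identification $\mC_{v_r}=\dadas 1 {k!^r}{\mC_0}$, and finish with the growth lemma for $v_{r-1}$.

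The one concrete error is your claim that the exponents in Proposition~\ref{Brueckenlemma}(i) ``degenerate'' at the auxiliary value $1$, forcing you to take $r_0=2$ and hence the exponent $2\cdot2+\tfrac{2}{2}+5=10$. No such degeneration occurs: part (i) is valid for all $r>0$ and the exponent $2s+\tfrac{2}{s}+5$ is a smooth function of $s$ on $(0,\infty)$, minimized exactly at $s=1$ with value $9$. (You are confusing it with part (ii) for the Jaffard algebra, where the terms $\tfrac{1}{r-1}$ and $\tfrac{2}{r-1}$ do blow up as $r\to1$.) The paper simply chooses $s=1$ and obtains $\norm{\inv A}_{\mC_0}\leq C_1\delta^{-9}$ directly; no ``better bookkeeping'' or absorption of constants is needed to reach the stated exponent. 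With that substitution your second step goes through verbatim and yields $\norm{\inv A}_{\mC_{v_r}}\leq C_0\delta^{-9}v_{r-1}(C_1\delta^{-9})$ as claimed.
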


\begin{proof}
Clearly, $\mC _{\nu _r} \subseteq \mC _s$ for all $s\geq 0$ and
$\norm{A}_{\mC _s} \leq \norm{A}_{\mC _{\nu _r}} = 1$,   therefore
Proposition~\ref{Brueckenlemma}(i) implies that 
$$
\norm{\inv A}_{\mC _0} \leq \norm{\inv A}_{\mC _s} \leq  C_s \delta^{-2s - \frac{2}{s} -
  5}= \frac{1}{\epsilon} \, . 
$$
Now, since $\mC _{\nu _r} $ admits \nci\ in $\mC _0$, the explicit
version of Theorem~\ref{thm-dd-nci} in~\eqref{eq:asympexpa} implies that
$$
\norm{ \inv A} _{\mC _{\nu _r}} \leq \frac{1}{\epsilon} \, \nu _{r-1} \Big(
\frac{1}{\epsilon} \Big) = C_s \delta ^{-2s - \frac{2}{s} - 5} \, \nu
_{r-1} \Big(C_s \delta ^{-2s - \frac{2}{s} - 5} \Big) \, .
 $$
Now choose the  optimal value  $s=1$. 
\end{proof}
\begin{rem} For the commutative subalgebra of Toeplitz  matrices and the sub-exponential weight
  $v(k)=\exp(\abs k^{1/r})$, in ~\cite{elfalla98}  the
  following estimate was obtained: if  $\norm{A}_{\mC_{v}}\leq 1$ and $\norm {\inv
    A }_{\bop} \leq 1/\delta$, then 
\[
\norm{\inv A}_{\mC_{v}}  \leq C {\delta^{-2}} \log{\inv \delta} \exp(C
\delta^{-\frac{2}{r-1}}) \, .
\]
As so often, the technique for commutative Banach algebras is vastly
different from the non-commutative case and yields a better result.  
\end{rem}




\appendix

\section{ Iterated Quotient Rules}
\label{sec:combident}
In the following algebraic formulations we use the product 
$\prod _{i=1}^na_i = a_1 a_2 \dots a_n$ and should be aware  the
multiplication is in general  non commutative. 

\begin{lem} 
Let $\dd$ be a derivation on $\mA $ with $\one \in \mD (\dd )$ and $k\in \bn $. If $\inv a \in
\mD (\dd ^k)$, then 
    \begin{equation}\label{eq:itquota}
    \dd^k(\inv a)=\sum_{1 \leq m \leq k}(-1)^m \sum_{\substack{k_1+ \dotsb k_m=k \\ k_j \geq 1}} \binom{k}{k_1, \dotsc,k_m}
    \Bigl(\prod_{i=1}^m [\inv a \dd^{k_i}(a)]\Bigr) \inv a
  \end{equation}
\end{lem}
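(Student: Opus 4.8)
The plan is to prove the iterated quotient rule \eqref{eq:itquota} by induction on $k$, using the ordinary quotient rule $\dd(\inv a) = -\inv a\,\dd(a)\,\inv a$ as the base case $k=1$ and the Leibniz rule \eqref{eq:derivation} for the inductive step. The key structural observation is that $\dd^k(\inv a)$ is a sum, over compositions $(k_1,\dots,k_m)$ of $k$ into $m\ge 1$ positive parts, of the "alternating chains" $(-1)^m\bigl(\prod_{i=1}^m \inv a\,\dd^{k_i}(a)\bigr)\inv a$, weighted by the multinomial coefficient $\binom{k}{k_1,\dots,k_m}$. One applies $\dd$ to a single such chain of length $k$ and checks that the result reorganizes into chains of length $k$ (a part $k_i$ gets replaced by $k_i+1$) and chains of length $k+1$ (a new part of size $1$ is inserted, via the $\dd(\inv a)$ terms), with exactly the right multinomial coefficients.

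First I would set up notation: write $P(k_1,\dots,k_m) = \bigl(\prod_{i=1}^m \inv a\,\dd^{k_i}(a)\bigr)\inv a$, so that the claimed identity reads $\dd^k(\inv a) = \sum_{m\ge1}(-1)^m\sum_{k_1+\dots+k_m=k}\binom{k}{k_1,\dots,k_m}P(k_1,\dots,k_m)$. Assuming this for $k$, I would apply $\dd$ term by term. By the Leibniz rule, $\dd\bigl(P(k_1,\dots,k_m)\bigr)$ is a sum of $m+1$ terms: for each $i$, the factor $\dd^{k_i}(a)$ becomes $\dd^{k_i+1}(a)$ (this keeps the chain length at $m$ but increments one part), and for each of the $m+1$ occurrences of $\inv a$, the factor $\inv a$ becomes $\dd(\inv a) = -\inv a\,\dd(a)\,\inv a$, which splices a new block $\inv a\,\dd^1(a)$ into the chain (raising the length to $m+1$ and contributing the sign $-1$). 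Then I would collect all contributions to a fixed target composition and verify the multinomial identity.

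The bookkeeping of multinomial coefficients is the step I expect to be the main obstacle, though it is elementary. For the length-$m$ contributions to $\dd^{k+1}(\inv a)$: a composition $(l_1,\dots,l_m)$ of $k+1$ arises from $(l_1,\dots,l_j-1,\dots,l_m)$ for each $j$ with $l_j\ge 2$ (and one checks the degenerate case $l_j=1$ does not contribute here, since decrementing would produce a zero part — this is where one uses that $\dd^0 = \id$ is handled separately, or equivalently that $\dd^1(a)$ arising from a split never gets incremented within the same application), and the needed identity is $\sum_{j:\,l_j\ge1}\binom{k}{l_1,\dots,l_j-1,\dots,l_m} = \binom{k+1}{l_1,\dots,l_m}$, which is the standard Pascal-type recursion for multinomials since $\sum_j l_j = k+1$. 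For the length-$(m{+}1)$ contributions: a composition $(l_1,\dots,l_{m+1})$ of $k+1$ receives, from inserting a new part of size $1$ at the position of the $p$-th $\inv a$ in a length-$m$ chain, the term with coefficient $\binom{k}{l_1,\dots,\widehat{l_p},\dots,l_{m+1}}$ whenever $l_p = 1$ (the hat meaning that entry — equal to $1$ — is deleted, leaving a composition of $k$), and again $\sum_{p:\,l_p=1}\binom{k}{\dots\widehat{l_p}\dots} = \binom{k+1}{l_1,\dots,l_{m+1}}$ by the same recursion. Combining both contributions with the sign $(-1)^m\cdot(-1) = (-1)^{m+1}$ for the length-$(m{+}1)$ part and $(-1)^m$ for the length-$m$ part yields exactly the claimed formula for $\dd^{k+1}(\inv a)$, completing the induction. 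Finally I would remark that the hypothesis $\inv a \in \mD(\dd^k)$ (together with $\one \in \mD(\dd)$, which gives $a, \inv a \in \mD(\dd)$ and makes all intermediate expressions lie in the appropriate domains) is exactly what is needed to justify applying $\dd$ at each stage.
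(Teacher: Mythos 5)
Your strategy is sound and genuinely different from the paper's. The paper proves \eqref{eq:itquota} by strong induction: it expands $0=\dd^k(\inv a\, a)$ with the iterated Leibniz rule, solves for $\dd^k(\inv a)\,a$, substitutes the already-established formulas for $\dd^j(\inv a)$ with $j<k$, and then re-indexes the double sum using $\binom{k}{j}\binom{j}{j_1,\dots,j_m}=\binom{k}{j_1,\dots,j_m,k-j}$. You instead differentiate the length-$k$ formula once more (weak induction), tracking how each chain $P(k_1,\dots,k_m)$ spawns chains of the same length (a part incremented) and of length $m+1$ (a part $1$ spliced in via $\dd(\inv a)=-\inv a\,\dd(a)\,\inv a$). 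Both routes are elementary and both reduce to a Pascal-type recursion for multinomial coefficients; yours avoids the strong induction and the sum interchange at the cost of a slightly more delicate collection of terms.

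That collection step is where your write-up goes wrong as literally stated. For a \emph{fixed} target composition $(l_1,\dots,l_n)$ of $k+1$, contributions arrive from \emph{both} mechanisms: from length-$n$ sources by incrementing a part $l_j\ge 2$, contributing $\sum_{j:\,l_j\ge 2}\binom{k}{l_1,\dots,l_j-1,\dots,l_n}$, and from length-$(n-1)$ sources by inserting a $1$ at each position $p$ with $l_p=1$, contributing $\sum_{p:\,l_p=1}\binom{k}{l_1,\dots,\widehat{l_p},\dots,l_n}$. Only the \emph{sum} of these two quantities equals $\binom{k+1}{l_1,\dots,l_n}$; you claim each partial sum separately equals the full multinomial coefficient, which would double-count any target having both a part $\ge 2$ and a part $=1$. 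Concretely, for the target $(2,1)$ with $k=2$: incrementing gives $\binom{2}{1,1}=2$, insertion gives $\binom{2}{2}=1$, and $2+1=3=\binom{3}{2,1}$, while neither $2$ nor $1$ equals $3$. The signs do work out as you say ($(-1)^{n-1}\cdot(-1)=(-1)^n$ for the insertion terms), so once the two contributions are merged into the single recursion
\begin{equation*}
\binom{k+1}{l_1,\dots,l_n}=\sum_{j:\,l_j\ge 2}\binom{k}{l_1,\dots,l_j-1,\dots,l_n}
+\sum_{j:\,l_j=1}\binom{k}{l_1,\dots,\widehat{l_j},\dots,l_n}\,,
\end{equation*}
your induction closes and the proof is complete.
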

  \begin{proof}
Since $\one \in \mD (\dd )$, we have $\dd (\one) = 0$.    The proof
is by induction.
 For $k=1$ the assertion is true. The induction step $k-1 \to k$  is
 proved with the help of the iterated Leibniz rule 
$
\dd^k(\inv a a)=\sum_{j=0}^k \binom{k}{j} \dd^j(\inv a)
\dd^{k-j}(a) 
$ and $\dd ^k (e) = 0$. Using the induction hypothesis on $\dd
^j (\inv a )$ for $j<k$, we obtain 
\[
\begin{split}
\dd^k(\inv a) &=-\sum_{j=0}^{k-1} \binom{k}{j} \dd^j(\inv a) \dd^{k-j}(a) \, \inv a 
=-\inv a \dd^k(a) \inv a \\&- \sum_{j=1}^{k-1} \binom{k}{j} \sum_{m=1}^j(-1)^m \sum_{\substack{j_1+ \dotsb j_m=j \\ j_l \geq 1}} \binom{j}{j_1, \dotsc,j_m}
\Bigl(\prod_{l=1}^m [\inv a \dd^{j_l}(a)]\Bigr) \inv a \dd^{k-j}(a) \, \inv a.
\end{split}
\]
Interchanging the outer sums,  we then obtain
\[
\begin{split}
  \dd^k(\inv a)=&
-\inv a \dd^k(a) \inv a \\
&- \sum_{m=1}^{k-1}(-1)^m\sum_{j=m}^{k-1} \binom{k}{j}  \sum_{\substack{j_1+ \dotsb j_m=j \\ j_l \geq 1}} \binom{j}{j_1, \dotsc,j_m}
\Bigl(\prod_{l=1}^m [\inv a \dd^{j_l}(a)]\Bigr) \inv a \dd^{k-j}(a) \, \inv a\\
&=-\inv a \dd^k(a) \inv a
- \sum_{m=1}^{k-1}(-1)^m  \sum_{\substack{j_1+ \dotsb j_{m+1}=k \\ j_l \geq 1}} \binom{k}{j_1, \dotsc,j_{m+1}}
\Bigl(\prod_{l=1}^{m+1} [\inv a \dd^{j_l}(a)]\Bigr) \inv a \\
&=-\inv a \dd^k(a) \inv a
+ \sum_{m=2}^{k-1}(-1)^m  \sum_{\substack{j_1+ \dotsb j_{m}=k \\ j_l \geq 1}} \binom{k}{j_1, \dotsc,j_{m}}
\Bigl(\prod_{l=1}^{m} [\inv a \dd^{j_l}(a)]\Bigr) \inv a, 
\end{split}
\]
and this is (\ref{eq:itquot}).
  \end{proof}

\label{sec:iter-quot-rule}
\begin{lem}
  If $\Psi = (\psi_t) $ is an automorphism group  on  \mA, then the following
  iterated product and quotient rules are valid for all $a, b \in \mA
  $: 
  \begin{equation}
    \label{eq:dprodrule}
    \Delta_t^k(ab)=\sum_{l=0}^k \binom{k}{l} \psi_{(k-l)t}(\Delta_t^la)\Delta_t^{k-l}b
  \end{equation} 
  \begin{equation} 
    \label{eq:dquotrule}
    \Delta_t^k(\inv a)=\psi_{kt}(\inv a) \sum_{m=1}^k(-1)^m 
\sum_{\substack{k_1+\dotsc k_m=k\\ k_j\geq1}} \binom{k}{k_1,\dotsc,k_m} \prod_{j=1}^m \psi_{(k -\sum_{l=1}^j k_l)t}\bigl((\Delta_t^{k_j}a) \inv a \bigr)
  \end{equation}
\end{lem}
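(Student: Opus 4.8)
The plan is to establish the product rule \eqref{eq:dprodrule} first, by induction on $k$, and then to deduce the quotient rule \eqref{eq:dquotrule} from it together with the identity $a\inv a=\one$. Throughout I use that each $\psi_t$ is a unital algebra homomorphism and that the $\psi_s$ commute with one another, so that $\Delta_t$ commutes with every $\psi_s$ and $\psi_s(\Delta_t^l a)\,\psi_s(b)=\psi_s\bigl((\Delta_t^l a)\,b\bigr)$.

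For the product rule, the base case $k=1$ is the identity $\Delta_t(ab)=\psi_t(a)\,\Delta_t b+(\Delta_t a)\,b$, which follows from $\psi_t(a)\psi_t(b)-ab=\psi_t(a)\bigl(\psi_t(b)-b\bigr)+\bigl(\psi_t(a)-a\bigr)b$. For the step $k\to k+1$ I would apply $\Delta_t$ to the right-hand side of \eqref{eq:dprodrule}, commute $\Delta_t$ past $\psi_{(k-l)t}$, and apply the base case to each summand $\psi_{(k-l)t}(\Delta_t^l a)\,\Delta_t^{k-l}b$. This yields two families of terms; re-indexing one of the two sums and collecting coefficients through Pascal's rule $\binom{k}{l}+\binom{k}{l-1}=\binom{k+1}{l}$ gives \eqref{eq:dprodrule} for $k+1$.

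For the quotient rule I argue by strong induction on $k$. Applying \eqref{eq:dprodrule} to $\one=a\inv a$ and using $\Delta_t^k\one=0$ for $k\ge 1$ gives
\[
0=\sum_{l=0}^{k}\binom{k}{l}\psi_{(k-l)t}(\Delta_t^l a)\,\Delta_t^{k-l}(\inv a).
\]
Since $\psi_{kt}$ is an automorphism, $\psi_{kt}(a)$ is invertible with inverse $\psi_{kt}(\inv a)$; isolating the $l=0$ term and multiplying on the left by $\psi_{kt}(\inv a)$ produces the recursion
\[
\Delta_t^k(\inv a)=-\psi_{kt}(\inv a)\sum_{l=1}^{k}\binom{k}{l}\psi_{(k-l)t}(\Delta_t^l a)\,\Delta_t^{k-l}(\inv a).
\]
The $l=k$ summand equals $-\psi_{kt}(\inv a)(\Delta_t^k a)\inv a$, which is exactly the $m=1$ contribution to \eqref{eq:dquotrule}. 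For $1\le l\le k-1$ I substitute the induction hypothesis for $\Delta_t^{k-l}(\inv a)$ and use $\psi_{(k-l)t}(\Delta_t^l a)\,\psi_{(k-l)t}(\inv a)=\psi_{(k-l)t}\bigl((\Delta_t^l a)\inv a\bigr)$: this prepends the part $l$ to each composition $(k_1,\dots,k_m)$ of $k-l$, giving a composition $(l,k_1,\dots,k_m)$ of $k$ with $m+1$ parts, turns the sign $(-1)^m$ into $(-1)^{m+1}$, and combines coefficients via $\binom{k}{l}\binom{k-l}{k_1,\dots,k_m}=\binom{k}{l,k_1,\dots,k_m}$. The automorphism exponents match as well: the prepended factor carries $\psi_{(k-l)t}$, and the original $j$-th factor carries $\psi$ with exponent $(k-l)-\sum_{i\le j}k_i=k-\bigl(l+\sum_{i\le j}k_i\bigr)$, precisely the exponent prescribed by \eqref{eq:dquotrule} for the composition $(l,k_1,\dots,k_m)$. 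Since every composition of $k$ with at least two parts arises in exactly one way by prepending its first part, summing over $l$ reconstructs all of \eqref{eq:dquotrule}.

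The main obstacle is the combinatorial bookkeeping in this last step: one must verify simultaneously that the multinomial coefficients multiply to the correct value and that the shift exponents $k-\sum_{i\le j}k_i$ transform correctly when a part is prepended to a composition. Everything else — the base cases, the commutation of $\Delta_t$ with $\psi_s$, and the re-indexing in the product rule — is routine.
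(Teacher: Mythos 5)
Your proposal is correct and follows essentially the same route as the paper's appendix proof: induction on $k$ for the product rule via the $k=1$ identity and Pascal's rule, then expanding $0=\Delta_t^k(a\inv a)$ with the product rule, multiplying by $\psi_{kt}(\inv a)$, and substituting the induction hypothesis so that each composition of $k$ with $m+1$ parts arises by prepending its first part. The only cosmetic difference is the indexing of the recursion sum (the paper re-indexes $l\mapsto k-l$ before isolating the top term), which does not change the argument.
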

\begin{proof}
We start from the identity 
  \begin{equation}
    \label{eq:diffprod}
    \Delta_t(ab)= \psi_t(a) \Delta_t(b)+ \Delta_t(a) b \,,
  \end{equation}
which is \eqref{eq:dprodrule} for $k=1$ and can be verified by direct
computation. Now we  proceed by induction. If $k>1 , $ then
\begin{align*}
  \Delta_t^k(ab) &= \Delta_t^{k-1}(\Delta_t(ab))= \Delta_t^{k-1}\bigl(  \psi_t(a) \Delta_t(b)+ \Delta_t(a) b \bigr) \\
                 & =\sum_{l=0}^{k-1} \binom{k-1}{l} \psi_{(k-1-l)t}(\Delta_t^l(\psi_t a))\Delta_t^{k-1-l}(\Delta_t b) \\
                 &  +\sum_{l=0}^{k-1} \binom{k-1}{l} \psi_{(k-1-l)t}(\Delta_t^{l+1}a)\Delta_t^{k-1-l}b
\end{align*}
We obtain \eqref{eq:dprodrule} after re-indexing the second sum ($l \to
l+1$) and using an identity for binomial coefficients. 

 To prove the quotient rule for difference operators we start the induction with \eqref{eq:invderiv}, which is \eqref{eq:dquotrule} for $k=1$. The induction step for $k>1$  is similar to the proof of Lemma~\ref{invnormderiv}. As
\begin{equation*}
  0= \Delta_t^k (a \inv a)=\sum_{l=0}^k \binom{k}{l}
  \psi_{(k-l)t}(\Delta_t^la)\Delta_t^{k-l}(\inv a)=\sum_{l=0}^k
  \binom{k}{l} \psi_{lt}(\Delta_t^{k-l}a)\Delta_t^{l}(\inv a) \, ,
\end{equation*}
we obtain
\begin{equation*}
  \Delta_t^k(\inv a)=-\psi_{kt}(\inv a) \sum_{l=0}^{k-1} \binom{k}{l}
  \psi_{lt}(\Delta_t^{k-l}a)\Delta_t^{l}(\inv a) \, .
\end{equation*}
Using the induction hypotheses for $\Delta_t^{l}(\inv a)$ for $l<k$
in the last expression we proceed as follows: 
\begin{align*}
  &\sum_{l=0}^{k-1} \binom{k}{l}
  \psi_{lt}(\Delta_t^{k-l}a)\Delta_t^{l}(\inv a) = \Delta_t^k(a) \inv
  a \\ 
 & + \sum_{l=1}^{k-1} \binom{k}{l} \psi_{lt}(\Delta_t^{k-l}a)
               \psi_{ l t}(\inv a) \sum_{m=1}^l(-1)^m 
\sum_{\substack{l_1+\dotsc l_m=l\\ l_j\geq1}}
\binom{l}{l_1,\dotsc,l_m} \prod_{j=1}^m \psi_{(l -\sum_{r=1}^j
  l_r)t}\bigl(\Delta^{l_j}(a) \inv a \bigr) \\ 
&=  \Delta_t^k(a) \inv a 
+  \sum_{l=1}^{k-1} \sum_{m=1}^l (-1)^m \psi_{lt}\big(\Delta_t^{k-l}( a)
\inv a\big)\binom{k}{l}\sum_{\substack{l_1+\dotsc l_m=l\\ l_j\geq1}}
\binom{l}{l_1,\dotsc,l_m} \prod_{j=1}^m \psi_{(l -\sum_{r=1}^j
  l_r)t}\bigl((\Delta^{l_j}a) \inv a \bigr) \\ 
&=  \Delta_t^k(a) \inv a 
+  \sum_{m=1}^{k-1} (-1)^m\sum_{l=m}^{k-1} \,\sum_{\substack{l_1+\dotsc
    l_m=l\\ l_j\geq1}}\binom{k}{l} \binom{l}{l_1,\dotsc,l_m}
\psi_{lt}\big(\Delta_t^{k-l}( a)\inv a\big)\prod_{j=1}^m \psi_{(l -\sum_{r=1}^j
  l_r)t}\bigl((\Delta^{l_j}a) \inv a \bigr) \, . 
\end{align*}
Substituting $l_{m+1}= k-l$ the last line simplifies to
\begin{equation}
  \label{eq:dquotit}
   \Delta_t^k(a) \inv a 
+  \sum_{m=1}^{k-1} (-1)^m\sum_{\substack{l_1+\dotsc l_{m+1}=k\\
    l_j\geq1}} \binom{k}{l_1,\dotsc,l_{m+1}}  \prod_{j=1}^{m+1}
\psi_{(k -\sum_{r=1}^j l_r)t}\bigl((\Delta^{l_j}a) \inv a \bigr) \, ,
\end{equation}
and  we are done. 
\end{proof}

\def\cprime{$'$} \def\cprime{$'$} \def\cprime{$'$} \def\cprime{$'$}
  \def\cprime{$'$} \def\cprime{$'$} \def\cprime{$'$} \def\cprime{$'$}


\begin{thebibliography}{10}

\bibitem{honary07}
M.~Abtahi and T.~G. Honary.
\newblock On the maximal ideal space of {D}ales-{D}avie algebras of infinitely
  differentiable functions.
\newblock {\em Bull. Lond. Math. Soc.}, 39(6):940--948, 2007.

\bibitem{Baskakov97}
A.~G. Baskakov.
\newblock Asymptotic estimates for elements of matrices of inverse operators,
  and harmonic analysis.
\newblock {\em Sibirsk. Mat. Zh.}, 38(1):14--28, i, 1997.

\bibitem{Bas97}
A.~G. Baskakov.
\newblock Estimates for the elements of inverse matrices, and the spectral
  analysis of linear operators.
\newblock {\em Izv. Ross. Akad. Nauk Ser. Mat.}, 61(6):3--26, 1997.

\bibitem{bratteli96}
O.~Bratteli.
\newblock {\em Derivations, Dissipations and Group Actions on {$C\sp
  *$}-algebras}, volume 1229 of {\em Lecture Notes in Mathematics}.
\newblock Springer-Verlag, Berlin, 1986.

\bibitem{Butzer67}
P.~L. Butzer and H.~Berens.
\newblock {\em Semi-groups of operators and approximation}.
\newblock Die Grundlehren der mathematischen Wissenschaften, Band 145.
  Springer-Verlag New York Inc., New York, 1967.

\bibitem{dales73}
H.~G. Dales and A.~M. Davie.
\newblock Quasianalytic {B}anach function algebras.
\newblock {\em J. Functional Analysis}, 13:28--50, 1973.

\bibitem{Demko84}
S.~Demko, W.~F. Moss, and P.~W. Smith.
\newblock Decay rates for inverses of band matrices.
\newblock {\em Math. Comp.}, 43(168):491--499, 1984.

\bibitem{elfalla98}
O.~{\`E}l{\cprime}-Falla, N.~K. Nikol{\cprime}ski{\u\i}, and M.~Zarrabi.
\newblock Estimates for resolvents in {B}eurling-{S}obolev algebras.
\newblock {\em Algebra i Analiz}, 10(6):1--92, 1998.

\bibitem{book}
K.~Gr{\"o}chenig.
\newblock {\em Foundations of time-frequency analysis}.
\newblock Birkh\"auser Boston Inc., Boston, MA, 2001.

\bibitem{gro04}
K.~Gr{\"o}chenig.
\newblock Localization of frames, {B}anach frames, and the invertibility of the
  frame operator.
\newblock {\em J.Fourier Anal. Appl.}, 10(2), 2004.

\bibitem{Gr10}
K.~Gr\"ochenig.
\newblock Wiener's lemma: Theme and variations. an introduction to spectral
  invariance.
\newblock In B.~Forster and P.~Massopust, editors, {\em Four Short Courses on
  Harmonic Analysis}, Appl. Num. Harm. Anal. Birkh\"auser, Boston, 2010.

\bibitem{grkl10}
K.~Gr{\"o}chenig and A.~Klotz.
\newblock Noncommutative approximation: Inverse-closed subalgebras and
  off-diagonal decay of matrices.
\newblock {\em Constructive Approximation}, 32:429--446, 2010.

\bibitem{GK12}
K.~Gr{\"o}chenig and A.~Klotz.
\newblock Norm-controlled inversion in smooth banach algebras, i.
\newblock {\em Preprint}, 2012.

\bibitem{GL04a}
K.~Gr{\"o}chenig and M.~Leinert.
\newblock Symmetry and inverse-closedness of matrix algebras and functional
  calculus for infinite matrices.
\newblock {\em Trans. Amer. Math. Soc.}, 358(6):2695--2711 (electronic), 2006.

\bibitem{GRS10}
K.~Gr{\"o}chenig, Z.~Rzeszotnik, and T.~Strohmer.
\newblock Convergence analysis of the finite section method and {B}anach
  algebras of matrices.
\newblock {\em Integral Equations Operator Theory}, 67(2):183--202, 2010.

\bibitem{Jaffard90}
S.~Jaffard.
\newblock Propri\'et\'es des matrices ``bien localis\'ees'' pr\`es de leur
  diagonale et quelques applications.
\newblock {\em Ann. Inst. H. Poincar\'e Anal. Non Lin\'eaire}, 7(5):461--476,
  1990.

\bibitem{MR1221047}
E.~Kissin and V.~S. Shul{\cprime}man.
\newblock Dense {$Q$}-subalgebras of {B}anach and {$C\sp *$}-algebras and
  unbounded derivations of {B}anach and {$C\sp *$}-algebras.
\newblock {\em Proc. Edinburgh Math. Soc. (2)}, 36(2):261--276, 1993.

\bibitem{klotz12}
A.~{K}lotz.
\newblock {S}pectral {I}nvariance of {B}esov-{B}essel {S}ubalgebras.
\newblock {\em {J}ournal of {A}pproximation {T}heory}, 164:268--296, 2012.


\bibitem{klotz12pre}
A.~{K}lotz.
\newblock  Inverse Closed Ultradifferential Subalgebras.
\newblock {\em ArXiv e-prints}, 2012.

\bibitem{Levin96}
B.~Y. Levin.
\newblock {\em Lectures on entire functions}, volume 150 of {\em Translations
  of Mathematical Monographs}.
\newblock American Mathematical Society, Providence, RI, 1996.
\newblock In collaboration with and with a preface by Yu. Lyubarskii, M. Sodin
  and V. Tkachenko, Translated from the Russian manuscript by Tkachenko.

\bibitem{Nikolski99}
N.~Nikolski.
\newblock In search of the invisible spectrum.
\newblock {\em Ann. Inst. Fourier (Grenoble)}, 49(6):1925--1998, 1999.


\bibitem{romero}
J.-L.~Romero.
\newblock Explicit localization estimates for spline-type spaces.
\newblock {\em Sampl. Theory Signal Image Process.}, 8(3):249--259, 2009.


\bibitem{Sun05}
Q.~Sun.
\newblock Wiener's lemma for infinite matrices with polynomial off-diagonal
  decay.
\newblock {\em C. R. Math. Acad. Sci. Paris}, 340(8):567--570, 2005.

\bibitem{Wheeden68}
R.~L. Wheeden.
\newblock On hypersingular integrals and {L}ebesgue spaces of differentiable
  functions.
\newblock {\em Trans. Amer. Math. Soc.}, 134:421--435, 1968.

\end{thebibliography}

\end{document}